\documentclass[12pt,reqno,english,empty]{amsart}
\usepackage{array}
\usepackage{frcursive} 
\usepackage[colorlinks=true,
linkcolor=blue,
citecolor=blue,
]{hyperref}
\usepackage[T1]{fontenc}
\usepackage{mathrsfs}
\usepackage{amsmath,amsthm,amssymb}
\usepackage{latexsym}
\usepackage{enumerate}
\usepackage{mathrsfs}
\usepackage{stmaryrd}
\usepackage{amsopn}
\usepackage{amsmath}
\usepackage{amssymb}
\usepackage{amsfonts}
\usepackage{soul}
\usepackage{amsbsy}
\usepackage{amscd,indentfirst,epsfig}
 \usepackage{amsfonts,amsmath,latexsym,amssymb,verbatim,amsbsy}
\usepackage{amsthm}
\usepackage{colordvi}
\usepackage{pstricks,bm}
\usepackage{subfigure}

\setlength{\oddsidemargin}{.5cm} \setlength{\evensidemargin}{.5cm}
\setlength{\textwidth}{16.0cm} \setlength{\textheight}{21.5cm}
 \baselineskip = 18pt

\usepackage{url}

\newcommand{\abs}[1]{\left\vert#1\right\vert}
\newcommand{\ap}[1]{\left\langle#1\right\rangle}
\newcommand{\norm}[1]{\left\Vert#1\right\Vert}

\def\R{\mathbb{R}}

\def\Q{\mathcal{Q}}

\def \D {\mathscr{D}}
\def \leq {\leqslant}
\def \geq {\geqslant}

\def \d {\,\mathrm{d} }

\def \ds {\,\mathrm{d}s }

\def \v {{v}}

\def \vb {\v_{\ast}}

\def \dD {\mathbb{D}}

\def \wp {\bm{\ell}}

\def\ds {\displaystyle}

\numberwithin{equation}{section}

\newcommand{\pa}[1]{\left(#1 \right)}
\newcommand{\avg}[1]{\langle #1 \rangle}

\newcommand{\Lin}{\mathcal{L}}
\newcommand{\sph}{\mathbb{S}^{d-1}}
\newcommand{\M}{\mathcal{M}}
\newtheorem{thm}{Theorem}[section]
\newtheorem*{thm*}{Theorem}
\newtheorem{cor}[thm]{Corollary}
\newtheorem{lem}[thm]{Lemma}
\newtheorem{propo}[thm]{Proposition}

\newtheorem{conj}{Conjecture}
\theoremstyle{definition}

\theoremstyle{remark}
\newtheorem{rem}[thm]{Remark}
\theoremstyle{example}

\title{\thetitle}

\date{}

\title[Linear Boltzmann Equation with Soft Potentials]
{\textbf{On the Rate of Convergence to Equilibrium for the Linear Boltzmann Equation with Soft Potentials}}

\author{Jos\'e A. Ca\~nizo, Amit Einav and Bertrand Lods}

\address{Jos\'e A. Ca\~nizo, Departamento de Matem\'{a}tica
  Aplicada, Universidad de Granada, Av. Fuentenueva S/N, 18071
  Granada, Spain}
\email{canizo@ugr.es}
\thanks{JAC was supported by project MTM2014-52056-P, funded by the
  Spanish government and the European Regional Development Fund}
\address{Amit Einav, Institut f\"ur Analyisis und Scientific Computing,
Technische Universit\"at Wien, Austria}
\email{aeinav@asc.tuwien.ac.at}
\thanks{AE was {partially supported by EPSRC grant EP/L002302/1 and partially supported by the Austrian Science Fund (FWF) grant M 2104-N32}}

\address{Bertrand Lods, Departement of Economics and Statistics  \& Collegio
  Carlo Alberto, Universit\`{a} degli Studi di Torino,  Corso Unione
  Sovietica, 218/bis, 10134 Torino, Italy}
\email{bertrand.lods@unito.it}

\begin{document}

\maketitle
 
\begin{abstract}
  In this work we present several quantitative results of convergence
  to equilibrium for the linear Boltzmann operator with soft
  potentials under Grad's angular cut-off assumption. This is done by
  an adaptation of the famous entropy method and its variants,
  resulting in explicit algebraic, or even stretched exponential,
  rates of convergence to equilibrium under appropriate
  assumptions. The novelty in our approach is that it involves
  functional inequalities relating the entropy to its production rate,
  which have independent applications to equations with mixed linear
  and non-linear terms. We also briefly discuss some properties of the
  equation in the non-cut-off case and conjecture what we believe to
  be the right rate of convergence in that case.
\end{abstract}

\tableofcontents
\section{Introduction}\label{sec:intro}

This work is concerned with the asymptotic behaviour of the linear
homogeneous Boltzmann equation in the less explored case of soft
potential interactions, and with a cut-off assumption (the precise
definition of all the above will be given shortly). We are interested
in the application of entropy techniques to study the approach to
equilibrium in the relative entropy sense, and in the application of
entropy inequalities to estimate its rate. Our results complement a
previous work by two of the authors \cite{BCL}, where the case of hard
potentials was studied following the same techniques.

Our motivation comes partly from the study of the linear Boltzmann
equation itself, which is a basic model in kinetic theory describing
the collisional interaction of a set of particles with a thermal bath
at a fixed temperature. Interactions among the particles themselves
are neglected, and thus the equation is linear. Various versions of
the linear Boltzmann equation are used to model phenomena such as
neutron scattering \cite{MMK1, monta}, radiative transfer \cite{alo}
and cometary flows \cite{felln} (we refer to \cite[Chapter
XXI]{dautray} for a detailed presentation of the mathematical theory
of linear collisional kinetic equations), and appears in some
non-linear models as a background interaction term \cite{BiCaLo, CaLo,
  froh}. On the other hand, a technical motivation for our results is
that inequalities relating the logarithmic entropy to its production
rate are interesting by themselves, and are helpful in the study of
non-linear models involving a linear Boltzmann term. These inequalities
are intriguing and have been studied in \cite{BCL} in the case of hard
potentials; we intend to complete these ideas by looking at the case
of soft potentials. Our strategy of proof is close to that in
\cite{Lu} (which applies to the non-linear Boltzmann equation), and is
based on this type of inequalities.

The linear Boltzmann equation we consider here has been studied in
several previous works \cite{BCL, MMK, LMT, pettersson}. Its spectral
gap properties are understood since \cite{grad}, with constructive
estimates on the size of the spectral gap in $L^2(M^{-1})$ (where $M$
is the equilibrium) for hard potentials given in \cite{LMT}. Semigroup
techniques were used in \cite{MMK,MMK1} to obtain convergence to
equilibrium for all initial conditions in $L^1$, without explicit
rates. An important related equation is the \emph{linearised}
Boltzmann equation, which has been treated for example in
\cite{baranger,caflisch,levermore,Mou,strain}. Roughly speaking, the
spectral gap properties of both equations (linear and linearised) are
now understood in a variety of spaces. The difference in our present
approach is that it is based on functional inequalities for the
logarithmic entropy, which have their own interest and are more robust
when applied to models with mixed linear and non-linear terms
\cite{BiCaLo, CaLo}.

Similar questions for the non-linear space-homogeneous Boltzmann
equation have also been considered in the literature, and we refer to
\cite{DMV} for an overview and to \cite{Lu} for convergence results
with soft potentials. Mathematical questions are more involved in the
non-linear setting, and of course the picture becomes more complete in
the linear case. However, the question remains open regarding the
validity of some functional inequalities in the non-cutoff case; we
comment on this at the end of this introduction.

\subsection{The linear Boltzmann operator} 
In this work we will be interested in properties of the solution to
the following spatially homogeneous Boltzmann equation
\begin{equation}
  \begin{cases}
    \label{eq:BE}
    \partial_{t} f(t,v)&
    = \bm{L}_{\gamma}f(t,v) := \Q_{\gamma}(f(t,\cdot),\M)(v)
    \qquad t \geq 0
    \\
    f(0,v) &= f_{0}(v)
  \end{cases}
\end{equation}
where $\M$ is the Maxwellian with the same mass as $f_0$, and $\Q_{\gamma}(f,g)$ denotes the bilinear Boltzmann collision operator 
\begin{equation}\label{bolt}
\Q_{\gamma}(f,g)=
\int_{\R^{d}\times \mathbb{S}^{d-1}} B_{\gamma}(v-\vb, \cos \theta) \left(
f(\v')g(\vb')
-f(\v)g(\vb)\right)\d\vb \d\sigma\end{equation}
associated to a given interaction kernel of the form
\begin{equation}\label{eq:Bcs}
  B_{\gamma}(v-\vb,\cos\theta)=|v-\vb|^{\gamma}\,b(\cos\theta)
\end{equation}
with $\gamma \in (-d,0)$, and a given even nonnegative function on
$[-1,1]$, $b$, that satisfies
\begin{equation}\label{eq:grad}
  \|b\|_{1}=\int_{\sph}b(\cos\theta)\d
  \sigma=|\mathbb{S}^{d-2}|\int_{-1}^{1}b(s)\left(1-s^{2}\right)^{\frac{d-3}{2}}\d
  s < \infty.
\end{equation}
(the so-called Grad's angular cut-off assumption). For simplicity, we
will assume that $\|b\|_{1}=1$. The linear Boltzmann operator is then
defined by
$$\bm{L}_{\gamma}f = \Q_{\gamma}(f,\M)$$
In the above, $\v'$ and $\vb'$ are the pre-collisional velocities
which result, respectively, in the velocities $ \v $ and $\vb$ after
the elastic collision, expressed by the equation
  \begin{equation}
    \v'=\dfrac{\v +\vb}{2} + \dfrac{|v -\vb|}{2}\sigma,
    \qquad \vb'=\dfrac{\v +\vb}{2} - \dfrac{|v-\vb|}{2}\sigma\,,
\end{equation}
for a random unit vector $\sigma$. The deviation angle, which appears
in \eqref{eq:Bcs}, is defined by
$$\cos\theta=\frac{(\vb'-\v')\cdot (\vb-v)}{|\vb-v|^{2}}=
\frac{v-\vb}{|v-\vb|} \cdot \sigma.$$
The function $f$ considered in \eqref{bolt} is assumed to be a
non-negative function with unit mass. As such, the associated
normalised Maxwellian is given by
\begin{equation}\label{maxwe1}
\M(\v)=\frac{1}{(2\pi)^{d/2}}\exp
\left(-\dfrac{|\v|^2}{2}\right), \qquad \qquad \v
\in \R^d\,.
\end{equation}
Our study concerns itself with collision kernels of the form
\eqref{eq:Bcs} with $\gamma\in(-d,0)$. We will use the following, well
known, terminology:
\begin{enumerate}[{\bf 1)}]
\item If $\gamma > 0$ and $b$ satisfies \eqref{eq:grad}, we {are in
    the case of} \emph{hard potentials} with {angular} cut-off.
\item If $\gamma = 0$ and $b$ satisfies \eqref{eq:grad}, we {are in the case of} \emph{Maxwell interactions} with {angular} cut-off.
\item If $-d < \gamma < 0$ and $b$ satisfies \eqref{eq:grad}, we {are
    in the case of} \emph{soft potentials} with {angular} cut-off. 
\end{enumerate}

Our quantitative investigation of the rate of decay to equilibrium of
solutions to equation \eqref{eq:BE} uses the so-called entropy
method. The study of this method for the case of hard potentials has
been explored in \cite{BCL}, and the goal of this work is to extend
this study to the soft potentials case.

Before we present the main result of our work we recall in the next
section a few known facts about the linear Boltzmann equation.

\subsection{Known properties of the linear Boltzmann
  equation}
\label{sec:basic}

Basic results regarding equation \eqref{eq:BE} are its well-posedness
and the long time behavior of the Cauchy problem (see for instance
\cite{MMK}):
\begin{propo}
  Assume that $B(v-\vb,\sigma)=|v-\vb|^{\gamma}b(\cos\theta)$ where
  $\gamma \in (-d,0)$ and $b\::\:[-1,1] \to \R^{+}$ is {an even
    function that satisfies \eqref{eq:grad}}. {Then, the} operator
  $\bm{L}_{\gamma}$ is a bounded operator in $L^{1}(\R^{d})$ and, as
  such, generates a $C_{0}$-semigroup $(U(t))_{t\geq 0}$ of positive
  operators in $L^{1}(\R^{d}).$ Consequently, for any non-negative
  $f_{0} \in L^{1}(\R^{d})$ there exists a unique (mild) solution
  $f(t,\cdot)$ to \eqref{eq:BE} with $f(0,\cdot)=f_{0}$, given by
  $f(t)=U(t)f_{0}$. Moreover, $(U(t)_{t\geq 0}$ is a \emph{stochastic
    semigroup}, i.e.
$$\int_{\R^{d}}U(t)f_{0}(v)\d v =\int_{\R^{d}} f(t,v)\d v=\int_{\R^{d}}f_{0}(v)\d v \qquad \forall t \geq 0,$$
{and} for any $f_{0}\in L^{1}(\R^{d})$
$$\lim_{t \to \infty}\|U(t)f_{0}-\varrho_0 \M \|_{L^{1}(\R^{d})} =0$$
where $\varrho_0=\ds \int_{\R^d} f_{0}(v)\d v$.
\end{propo}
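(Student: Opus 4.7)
The plan is to establish each assertion in turn, exploiting the splitting $\bm{L}_\gamma f = \Q_\gamma^+(f,\M) - f\,\sigma_\gamma$, where the (normalised) collision frequency is
$$
\sigma_\gamma(v) = \int_{\R^d}|v-\vb|^\gamma \M(\vb)\d\vb,
$$
and $\Q_\gamma^+(f,\M)$ collects the pre-collisional terms in \eqref{bolt}. First I would show $\sigma_\gamma \in L^\infty(\R^d)$ for $\gamma\in(-d,0)$: the singularity of $|\cdot|^\gamma$ at the origin is locally integrable because $\gamma>-d$, while the Gaussian tail of $\M$ controls the integral at infinity, so $\sigma_\gamma$ is uniformly bounded (and in fact behaves like $|v|^\gamma\to 0$ as $|v|\to\infty$). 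This gives at once that the loss operator $f\mapsto f\,\sigma_\gamma$ is bounded on $L^1(\R^d)$. For the gain operator, the classical pre/post-collisional change of variables together with $\|b\|_1=1$ yields $\|\Q_\gamma^+(f,\M)\|_{L^1}\leq \|\sigma_\gamma\|_\infty\,\|f\|_{L^1}$ (via $|\Q_\gamma^+(f,\M)|\leq \Q_\gamma^+(|f|,\M)$ and Fubini). Hence $\bm{L}_\gamma$ is bounded on $L^1(\R^d)$ and consequently generates a uniformly continuous (a fortiori $C_0$) semigroup $U(t)=\exp(t\bm{L}_\gamma)$, which provides the unique mild solution $f(t,\cdot)=U(t)f_0$.

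Positivity of $U(t)$ would then be obtained through the Dyson--Phillips expansion built on the further decomposition $\bm{L}_\gamma = \Q_\gamma^+(\cdot,\M) - M_{\sigma_\gamma}$, where $M_{\sigma_\gamma}$ denotes multiplication by $\sigma_\gamma\geq 0$. Setting $U_0(t)f=e^{-t\sigma_\gamma}f$ and iteratively
$$
U_{n+1}(t)f = \int_0^t U_0(t-s)\,\Q_\gamma^+(U_n(s)f,\M)\ds,
$$
each $U_n(t)$ manifestly preserves non-negativity, and $\sum_{n\geq 0}U_n(t)$ converges in operator norm to $U(t)$. Mass conservation is then read off from $\int_{\R^d}\bm{L}_\gamma f\d v=0$, which reflects the symmetry of the change of variables $(v,\vb,\sigma)\leftrightarrow(v',\vb',\sigma')$: this map has unit Jacobian and leaves $B_\gamma$ invariant, so integrating the collision operator against the test function $\mathbf{1}$ gives zero.

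The only genuinely delicate point is the long-time behaviour. First, $\M$ is an equilibrium: $\bm{L}_\gamma\M=0$ follows from micro-reversibility, $\M(v)\M(\vb)=\M(v')\M(\vb')$, together with the symmetry of $B_\gamma$. To conclude that $U(t)f_0\to\varrho_0\M$ in $L^1$ for every non-negative $f_0\in L^1(\R^d)$, I would combine three ingredients: (i) Boltzmann's $H$-theorem,
$$
\ddt \int_{\R^d}f(t,v)\log\frac{f(t,v)}{\M(v)}\d v \leq 0,
$$
obtained by the standard $\log$-symmetrisation of the collision integral; (ii) the characterisation that the only non-negative $L^1$ solutions of $\bm{L}_\gamma f = 0$ with mass $\varrho_0$ are the scalar multiples $\varrho_0\M$; (iii) a tightness/compactness argument along the orbit $\{U(t)f_0\}_{t\geq 0}$, combined with the $L^1$-contraction $\|U(t)(f_0-g_0)\|_{L^1}\leq \|f_0-g_0\|_{L^1}$ (itself a consequence of positivity and mass conservation), allowing one to pass from sufficiently regular initial data to general $f_0\in L^1$ by density.

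The main obstacle is precisely this last step: for soft potentials the operator $\bm{L}_\gamma$ does \emph{not} admit an $L^1$ spectral gap, so convergence cannot be read off from a direct Grönwall estimate and must instead be extracted from the qualitative combination of entropy dissipation and compactness just described. Upgrading this qualitative convergence to an explicit rate via functional inequalities between the relative entropy and its production is precisely the content of the rest of the paper.
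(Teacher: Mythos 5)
Your treatment of the operator-theoretic part is correct and standard: $\Sigma_\gamma\in L^\infty$ for $\gamma\in(-d,0)$ (this is Lemma \ref{lem:loss_operator}), the pre/post-collisional change of variables gives $\|\Q_\gamma^+(f,\M)\|_{L^1}\le\|\Sigma_\gamma\|_\infty\|f\|_{L^1}$, hence $\bm{L}_\gamma$ is bounded on $L^1$ and generates a uniformly continuous semigroup; positivity follows from the Dyson--Phillips iteration built on $\bm{L}_\gamma=\bm{K}_\gamma-\Sigma_\gamma$, and mass conservation from $\int_{\R^d}\bm{L}_\gamma f\,\d v=0$. Be aware, though, that the paper does not prove this proposition; it refers to \cite{MMK}, whose argument for the $L^1$ convergence to equilibrium is semigroup-theoretic (spectral and weak-compactness analysis of stochastic semigroups), whereas you propose an entropy-dissipation / LaSalle-type route. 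You are therefore taking a genuinely different path from the one the paper points to.

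The gap is in your step (iii), where "tightness/compactness along the orbit" carries the whole weight of the statement and is left unargued. Two concrete issues. First, what you need is \emph{strong} $L^1$ relative compactness of $\{U(t)f_0\}_{t\ge t_1}$, not weak: a uniform bound on $H(f(t)|\M)$ does yield uniform integrability and tightness (hence weak $L^1$ compactness by Dunford--Pettis), but $H(\cdot|\M)$ is only weakly \emph{lower} semicontinuous, so passing to a weak limit $g$ along a subsequence with vanishing entropy production and using $g=\varrho_0\M$ only gives $0=H(g|\M)\le\liminf H(f(t_n)|\M)$, which is vacuous. Upgrading to strong $L^1$ compactness requires a regularisation mechanism (typically extracted from the gain operator $\bm{K}_\gamma$, which is compact in suitable weighted spaces), and that is precisely where the real work lies. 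Second, for soft potentials you cannot fall back on uniform-in-time moment bounds to get tightness: Theorem \ref{thm:evolution_of_moments} only gives $m_s(t)\lesssim 1+t$, and Appendix \ref{app:slow} shows the decay can be arbitrarily slow, so tightness must be extracted from the relative entropy itself, and the density argument reducing to finite-entropy data must be structured around this. Also, item (ii) (that $\bm{L}_\gamma f=0$, $f\ge 0$, forces $f=\varrho_0\M$, via $\D_\gamma(f)=0\Rightarrow h(v')=h(v)$ a.e.\ and a connectedness argument) is asserted but not shown. As written, the long-time part is a plausible plan rather than a proof.
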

Notice that the above long-time behaviour of the solution to \eqref{eq:BE} does not require \emph{any additional assumption} on the initial datum. However, it does not provide any kind of rate of convergence for such general initial datum. {In fact, we will show} in the Appendix \ref{app:slow} that, without additional assumptions on the initial datum, the rate of convergence can be \emph{arbitrarily slow}. 

{From this point onwards, unless stated otherwise, we will assume that 
$$\varrho_{0}=\ds\int_{\R^{d}}f_{0}(v)\d v=1.$$} 

The first important observation in the study of the rate of
convergence to equilibrium is the fact that linear Boltzmann equation
\eqref{eq:BE} admits infinitely many Lyapunov functionals.

\begin{lem}
  \label{lem:lyapunov_functionals}
  Let $\mathbf{\Phi} \colon \R^{+}\to \R^{+}$ be a convex function and
  let $f(t,v)$ be non-negative solution to \eqref{eq:BE}. Then the
  functional
  $$H_{\mathbf{\Phi}}(f(t)|\M)=\int_{\R^d}\M(v)\mathbf{\Phi}\pa{\frac{f(t,v)}{\M(v)}}\d v$$
  is non-increasing. 
\end{lem}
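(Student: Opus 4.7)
The plan is to differentiate $H_{\mathbf{\Phi}}(f(t)|\M)$ in time, rewrite the result as an integral over the collisional variables, and then use symmetry together with the convexity of $\mathbf{\Phi}$ to conclude that the time derivative is non-positive.

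First I would write $h(t,v) := f(t,v)/\M(v)$ and observe that, at least formally (assuming enough regularity for the differentiation under the integral sign, which is available because $\bm{L}_\gamma$ is bounded on $L^1(\R^d)$ and we may first restrict to $\mathbf{\Phi}$ of controlled growth, or to bounded $f_0/\M$, and then extend by approximation / monotone convergence),
\begin{equation*}
\frac{\d}{\dt} H_{\mathbf{\Phi}}(f(t)|\M)
= \int_{\R^d} \mathbf{\Phi}'(h(t,v))\,\partial_t f(t,v)\dx
= \int_{\R^d} \mathbf{\Phi}'(h(t,v))\,\Q_{\gamma}(f(t,\cdot),\M)(v)\dx.
\end{equation*}
Plugging in the definition \eqref{bolt} and using the crucial Maxwellian identity $\M(\v)\M(\vb) = \M(\v')\M(\vb')$ (which follows from conservation of kinetic energy at a collision), the integrand $f(\v')\M(\vb') - f(\v)\M(\vb)$ becomes $\M(\v')\M(\vb')\bigl(h(\v') - h(\v)\bigr)$, so that
\begin{equation*}
\frac{\d}{\dt} H_{\mathbf{\Phi}}(f(t)|\M)
= \Itt B_{\gamma}(v-\vb,\cos\theta)\,\M(\v')\M(\vb')\,\bigl(h(\v')-h(\v)\bigr)\,\mathbf{\Phi}'(h(\v))\dx\dy\d\sigma.
\end{equation*}

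The next step is the standard pre/post-collisional symmetrization. The involutive change of variables $(\v,\vb,\sigma) \mapsto (\v',\vb',\sigma')$ with $\sigma' = (\v-\vb)/|\v-\vb|$ has unit Jacobian and leaves $B_\gamma$ invariant (since $|\v-\vb| = |\v'-\vb'|$ and $b$ is even); applying it and using $\M\M_\ast = \M'\M_\ast'$ once more, the same integral equals
\begin{equation*}
\Itt B_{\gamma}(v-\vb,\cos\theta)\,\M(\v)\M(\vb)\,\bigl(h(\v)-h(\v')\bigr)\,\mathbf{\Phi}'(h(\v'))\dx\dy\d\sigma.
\end{equation*}
Averaging the two expressions and using $\M\M_\ast = \M'\M_\ast'$ one last time gives the symmetrized form
\begin{equation*}
\frac{\d}{\dt} H_{\mathbf{\Phi}}(f(t)|\M)
= -\frac{1}{2}\Itt B_{\gamma}\,\M(\v')\M(\vb')\,\bigl(h(\v')-h(\v)\bigr)\bigl(\mathbf{\Phi}'(h(\v'))-\mathbf{\Phi}'(h(\v))\bigr)\dx\dy\d\sigma.
\end{equation*}
Since $\mathbf{\Phi}$ is convex, $\mathbf{\Phi}'$ is non-decreasing, so $(h(\v')-h(\v))(\mathbf{\Phi}'(h(\v'))-\mathbf{\Phi}'(h(\v))) \geq 0$ pointwise; combined with the positivity of $B_\gamma$ and $\M$, this forces the integral to be non-negative and hence the derivative to be non-positive, proving monotonicity.

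The main technical obstacle is the rigorous justification of the formal differentiation and of the symmetry manipulations when $h$ or $\mathbf{\Phi}'(h)$ are not a priori integrable in the required sense; in the soft potentials setting one also has to make sense of $B_\gamma(\v-\vb,\cos\theta) = |\v-\vb|^\gamma b(\cos\theta)$ with a singular kernel at $\v=\vb$. Both issues are dealt with by a routine approximation argument: truncate $B_\gamma$ by $B_\gamma^N := \min(|\v-\vb|^\gamma,N)\,b(\cos\theta)$, work with smooth bounded $\mathbf{\Phi}_N$ approximating $\mathbf{\Phi}$ from below, and pass to the limit using monotone convergence together with the stochastic character of $U(t)$ stated in the previous proposition.
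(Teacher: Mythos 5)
Your proof is correct and follows essentially the same route as the paper's: differentiate $H_{\mathbf{\Phi}}$ under the flow, use the Maxwellian collisional invariance $\M(v)\M(\vb)=\M(v')\M(\vb')$ to write the integrand in terms of $h(v')-h(v)$, symmetrize via the pre/post collision change of variables, and conclude from the monotonicity of $\mathbf{\Phi}'$. The paper gives only the formal computation (deferring rigor to the cited reference on stochastic semigroups), while you additionally sketch the truncation/approximation step to make it rigorous; that extra paragraph is a reasonable supplement but not a different argument.
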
 
We refer to the Appendix \ref{app:BE} for a formal proof of that
property which is a general property of stochastic semigroups (see
\cite{rudnicki}). For the particular choice
$\mathbf{\Phi}(x)=x\log x -x+1$, one recovers the famous Boltzmann
relative entropy, which we will denote by $H(f|\M)$ and concludes the
$H-$Theorem:
$$\dfrac{\d}{\d t} H(f(t)|\M) \leq 0 \qquad \forall t \geq 0$$
for any solution $f(t,v)$ to \eqref{eq:BE} with unit mass {and initial
  data in an appropriate weighted space}. The rate at which the
relative entropy decreases is fundamental for the understanding of the
large time behaviour of $f(t,\cdot)$. Defining the \emph{entropy
  production} {as}:
$$\D_{\gamma}(f)=-\int_{\R^{d}}\,\bm{L}_{\gamma}(f)\log\pa{\frac{f(v)}{\M(v)}}\d v,$$
which is obtained by the minus of the formal derivative of the entropy
under the flow of the equation, the entropy method seeks to find a
general functional inequality that connects the entropy and the
entropy production. Such inequality is transformed into a differential
inequality along the flow of the equation, from which a concrete rate
of convergence to equilibrium can be obtained.

{The definition of $\D_\gamma$ can easily be extended to} any linear
Boltzmann operator $\bm{L}_{\gamma}$ with $\gamma>-d$. 
More generally, {we will denote the entropy production associated to a
  linear operator $\Q(f,\M)$ with collision kernel
  $B(v-v_{*},\sigma).$ by $\D_{B}(f)$, and an easy computation shows
  that}
\begin{equation}\label{eq:DB}
\D_{B}(f)=\frac{1}{2}\int_{\R^{d}\times\R^{d}\times\mathbb{S}^{d-1}}B(v-v_{*},\sigma)\M(v)\M(v_{*})\left(h(v')-h(v)\right)\log \frac{ h(v')}{ h(v)}\d v \d v_{*}\d\sigma\end{equation}
 where $h=\frac{f}{\M}.$ In particular, {as expected}, $\D_{B}(f) \geq 0$.
  
 The study of the entropy method is more developed for the Maxwellian
 and hard potentials case. In particular, we state the following
 theorem from \cite{BCL}, which will play an important role in our own
 study:

\begin{thm}\label{thm:entropy_method_hard_potentials}
  Consider a collision kernel $B(v-v_{*},\sigma)$ associated to
  Maxwell interactions
  $$B(v-v_{*},\sigma)=b(\cos\theta),$$
  where $b\::\:[-1,1]\to \R$ is an even function satisfying
  \eqref{eq:grad}. Then, there exists $\lambda_{0}>0$, depending only
  on $b$ such that
  \begin{equation}
    \label{eq:entropy_method_hard_potentials}
    \D_{0}(f) \geq \lambda_{0} H\pa{f|\M}.
  \end{equation}
  for any non-negative $f$ with unit mass {such that
    $$\int_{\R^d} \pa{1+\abs{v}^2} f(v) \abs{\log f(v)}\d v < \infty.$$}
\end{thm}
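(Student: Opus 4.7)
The plan is to reduce the desired inequality to an $L^{2}(\M)$-spectral gap for the Maxwell linear Boltzmann operator, and then upgrade the resulting quadratic bound to one involving the logarithmic entropy. First, using the symmetric representation \eqref{eq:DB} with $h=f/\M$ together with the pointwise inequality $(a-b)(\log a-\log b)\geq 4(\sqrt{a}-\sqrt{b})^{2}$, valid for all $a,b>0$, I would obtain
\[
\D_{0}(f)\,\geq\, 2\int_{\R^{d}\times\R^{d}\times\sph} b(\cos\theta)\,\M(v)\M(v_{*})\,\bigl(\sqrt{h(v')}-\sqrt{h(v)}\bigr)^{2}\,\d v\,\d v_{*}\,\d\sigma.
\]

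The right-hand side is, up to a constant, the Dirichlet form in $L^{2}(\M)$ of the self-adjoint non-positive operator $\mathcal{K}g(v):=\int_{\R^{d}\times\sph} b(\cos\theta)\,\M(v_{*})\,(g(v')-g(v))\,\d v_{*}\,\d\sigma$, whose kernel is spanned by the constants. For Maxwell interactions, the explicit Wang--Chang--Uhlenbeck decomposition---with eigenfunctions given by products of spherical harmonics and Sonine--Laguerre polynomials---furnishes a spectral gap $\mu_{0}>0$ depending only on $b$. Applied to $g=\sqrt{h}$ and using $\int h\,\M\,\d v=1$, this yields
\[
\D_{0}(f)\,\geq\, 4\mu_{0}\bigl(1-\langle\sqrt{h}\rangle_{\M}^{2}\bigr).
\]

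The main obstacle is then to convert this squared-Hellinger lower bound into one involving $H(f|\M)$: in general, the squared Hellinger distance is dominated by, rather than comparable to, the Kullback--Leibler divergence, so the desired entropy inequality does \emph{not} follow from the $L^{2}$-spectral gap alone. To close this gap, I would prove a \emph{modified logarithmic Sobolev inequality} (MLSI) for the generator $\mathcal{K}$ directly, i.e.\ an estimate of the form $\int h\,(\log h)\,\M\,\d v\leq C\,\mathcal{E}(h)$ with $\mathcal{E}(h)=-\langle\mathcal{K}h,\log h\rangle_{L^{2}(\M)}$, exploiting the averaging structure of the Markov operator $\mathcal{T}=\mathcal{K}+I$ together with the explicit spectral information from the Wang--Chang--Uhlenbeck decomposition. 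A concrete route is to combine the $L^{2}$-spectral gap with a hypercontractivity (Gross-type) estimate for the semigroup $e^{t\mathcal{K}}$ on $L^{p}(\M)$, which upgrades the spectral gap into the desired entropy decay rate with a constant depending only on $b$. The role of the integrability assumption $\int(1+|v|^{2})f|\log f|\,\d v<\infty$ is mainly to ensure that both $H(f|\M)$ and $\D_{0}(f)$ are well-defined and finite.
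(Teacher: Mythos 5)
The paper does not prove this theorem; it quotes it verbatim from the earlier work \cite{BCL} by two of the authors, where the proof occupies a substantial part of the paper. So the comparison below is really against what makes such a modified logarithmic Sobolev inequality (MLSI) work, rather than against a proof contained in the present paper.

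Your first reduction is correct and standard: the pointwise bound $(a-b)(\log a-\log b)\geq 4(\sqrt a-\sqrt b)^2$ passes from $\D_0(f)$ to four times the quadratic Dirichlet form of $\sqrt h$, and the Wang--Chang--Uhlenbeck diagonalisation does give an explicit $L^2(\M)$ spectral gap for Maxwell molecules. You also correctly identify that this only controls $1-\langle\sqrt h\rangle_\M^2$ (the squared Hellinger distance), which is dominated by $H(f|\M)$ but does not dominate it. The genuine gap is what comes next. Your proposed fix, ``prove a modified logarithmic Sobolev inequality for $\mathcal K$ directly,'' is not a proof step: the MLSI for $\mathcal K$ \emph{is} precisely the statement $\D_0(f)\geq\lambda_0 H(f|\M)$ you are asked to prove, so at this point the argument has circled back to the theorem rather than advancing on it. The concrete route you then suggest --- deduce the MLSI from hypercontractivity of $e^{t\mathcal K}$ --- is also problematic. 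By Gross's theorem hypercontractivity is equivalent to the full (squared) log-Sobolev inequality $\mathrm{Ent}_\M(g^2)\leq C\,\mathcal E(g,g)$, and for non-diffusive (jump) generators like $\mathcal K$ this LSI is strictly stronger than the MLSI: the implication LSI $\Rightarrow$ MLSI goes through the same $(a-b)(\log a-\log b)\geq 4(\sqrt a-\sqrt b)^2$ bound you used, but the converse fails. So the route tries to prove something stronger than the theorem, and it is not clear (and not argued here) that hypercontractivity/LSI holds for this bounded jump generator; a spectral gap alone certainly does not imply it.

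The structural feature that a working proof must exploit, and that your outline never uses, is that for Maxwell interactions the collision frequency $\Sigma_0(v)\equiv\|b\|_1$ is \emph{constant}. This means $\bm L_0=\bm K_0-\|b\|_1\,\mathrm{Id}$ with $\bm K_0/\|b\|_1$ a genuine Markov operator admitting $\M$ as invariant measure and satisfying detailed balance (cf.\ Lemma~\ref{lem:gain_operator}). One can then write
\[
\D_0(f)=H(f|\M)-\int_{\R^d}\bm K_0 f\,\log\tfrac{f}{\M}\,\d v
\geq H(f|\M)-H(\bm K_0 f\,|\,\M),
\]
the last step being Gibbs' inequality $\int \bm K_0f\,\log\frac{f}{\M}\leq\int\bm K_0f\,\log\frac{\bm K_0f}{\M}$. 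The theorem then follows from an entropy-contraction (strong data-processing) inequality $H(\bm K_0f|\M)\leq(1-\lambda_0)H(f|\M)$, which is a genuinely quantitative statement about the Maxwell gain kernel (roughly, a Doeblin-type minorisation) and is where the constant $\lambda_0$ comes from. This convexity-and-kernel-bound argument is elementary and robust; it is the piece your outline lacks, and proving it requires explicit information about $k_0(v,w)$ of the sort derived in Section~\ref{sec:maxwellian_lower_bounds} and in \cite{BCL}, not spectral data.
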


In general, we don't expect a linear inequality like
\eqref{eq:entropy_method_hard_potentials} relating the entropy
production to the relative entropy in the case of soft
potentials. Indeed, such an inequality would imply the existence of a
positive spectral gap in the space $L^{2}(\M^{-1})$ for the operator
$\bm{L}_{\gamma},$ which is known to be false (see \cite{caflisch} for
the linearised case and \cite{MMK} for the linear case). This is
  since the essential spectrum of $\bm{L}_{\gamma}$ can be shown to
contain a whole interval of the type $[-\nu_{0},0]$ {(see} Remark
\ref{rem:nospect} for more details and references on this topic).

The next type of functional inequality one may explore is the
following weaker inequality:
\begin{equation}\label{eq:Ddelta}
  \D_{\gamma}(f) \geq C_{\delta} H(f|\M)^{1+\delta}
\end{equation}
for some large class of probability densities $f$ and for some
explicit $\delta >0$ and $C_{\delta} >0$. In fact, to quantify the
long time behaviour of the linear Boltzmann equation, it is enough for
an inequality of the form \eqref{eq:Ddelta} to be valid along the flow
of solutions to \eqref{eq:BE}.

Next, we describe the main result of the present work.

\subsection{Main results}\label{sec:result}

Before stating our main results we will introduce some convenient
notation. Given a non-negative measurable function $f$, we denote the
$k$-th moment, and generalised $k$-th moment, of $f$ by
$$m_{k}(f)=\int_{\R^{d}}|v|^{k}f(v) \d v \qquad M_{k}(f)=\int_{\R^{d}}\langle v \rangle^{k}f(v)\d v \qquad \forall k \in \R$$
where $\langle v\rangle=\sqrt{1+|v|^{2}}$ for any $v \in \R^{d}.$
Moreover, given $s \in \R$ and $p \in (1,\infty)$, we set
$$M_{s,p}(f)=M_{s}(|f|^{p}),$$
and notice that $M_{0,p}(f)=\|f\|_{p}$. For a given $s \geq 0$ we
denote by
 $$\|f\|_{L^{1}_{s}}=M_s\pa{\abs{f}}$$
and
$$\|f\|_{L^{1}_{s}\log L}=\int_{\R^{d}}\langle v\rangle^{s} |f(v)|\,|\log |f(v)\,|| \d v,$$
and define the function spaces
\begin{gather*}
  L^{1}_{s} =
  L^{1}_{s}(\R^{d}) =
  \left\{ f \colon \R^{d}\to \R \mid
    \text{$f$ measurable and } \|f\|_{L^{1}_{s}} <\infty
  \right\}
  \\
  L^{1}_{s}\log L =
  \left\{f \colon \R^{d}\to \R \mid
    \text{$f$ is measurable and }
    \|f\|_{L^{1}_{s}\log L} <\infty
  \right\}.
\end{gather*}
Even if $\|\cdot\|_{L^{1}_{s}\log L}$ is not a norm, this notation is
commonly seen in the literature.

\medskip We are now ready to state our first main result.
 
\begin{thm}
  \label{thm:rate_of_convergence}
  Take $p > 1$ and $-d < \gamma < 0$, and let
  $f_0\in L^1_s\pa{\R^d} \cap L^p(\R^d)$ be a non-negative function
  with unit mass, for $s\geq s_{p,d,\gamma}$, where
  $s_{p,d,\gamma} > 2 + |\gamma|$ is an explicit constant that depends
  only on $p,d$ and $\gamma$. Let $f = f(t)$ be the solution to
  equation \eqref{eq:BE} with a bounded angular kernel $b$. Then for any
  $$\sigma < -1+\frac{s-2}{\abs{\gamma}}$$
  there exists a uniform constant $C_{0}>0$ depending only on
  $d,\gamma,p,s,\sigma,\norm{f_0}_{L^1_s},\norm{f_0}_p$ and
  $H\pa{f_0|\M}$ such that
  \begin{equation}
    \label{eq:rate_of_convergence}
    H\pa{f(t)|\M} \leq C_{0}\pa{1+t}^{-\sigma},
  \end{equation}
  for all $t \geq 0$.
\end{thm}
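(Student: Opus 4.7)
\medskip
\noindent\emph{Proof strategy.} The plan is to adapt the entropy method to the soft-potential setting, where the linear inequality $\D_\gamma\geq \lambda H$ of Theorem~\ref{thm:entropy_method_hard_potentials} is known to fail. The aim is to derive, along the flow of \eqref{eq:BE}, a nonlinear entropy--entropy-production inequality of the form
\[
\D_\gamma(f(t)) \;\geq\; C\,H(f(t)|\M)^{1+\delta}, \qquad t\geq 0,
\]
with an exponent $\delta=\delta(s,\gamma)>0$ satisfying $1/\delta>\sigma$, and then to integrate the resulting differential inequality $\ddt H(f(t)|\M)\leq -C\,H(f(t)|\M)^{1+\delta}$ to obtain the algebraic rate \eqref{eq:rate_of_convergence}.

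The first step is to establish uniform-in-time bounds on the relevant norms of $f(t)$. Since $\M$ has all polynomial moments, testing \eqref{eq:BE} against $\langle v\rangle^s$ and exploiting the linear gain--loss structure gives $\sup_{t\geq 0}\|f(t)\|_{L^1_s}<\infty$; a similar gain-term argument in $L^p$ yields $\sup_{t\geq 0}\|f(t)\|_{L^p}<\infty$. A standard interpolation between these two controls then produces a uniform bound on $\|f(t)\|_{L^1_{s'}\log L}$ for every $s'<s$, which in particular ensures that the formal computations on the entropy and entropy production are justified.

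The heart of the proof is an entropy-production comparison with the Maxwell-interaction case. Using $\gamma<0$ and the pointwise lower bound $|v-v_*|^\gamma \geq R^\gamma \mathbf{1}_{|v-v_*|\leq R}$ in the representation \eqref{eq:DB}, one obtains
\[
\D_\gamma(f)\;\geq\;R^\gamma\bigl(\D_0(f)-T_R(f)\bigr), \qquad R>0,
\]
where $T_R(f)$ collects the contribution of $\{|v-v_*|>R\}\subset\{|v|>R/2\}\cup\{|v_*|>R/2\}$. Combining the elementary inequality $(h-h')\log(h/h')\leq (h+h')(|\log h|+|\log h'|)$ with the collision invariance $\M(v)\M(v_*)=\M(v')\M(v'_*)$ and the symmetries of the integrand, $T_R(f)$ reduces to integrals of the form $\int_{|v|>R/2} f(v)\bigl(|\log f(v)|+|v|^2+1\bigr)\d v$, which by a Markov-type bound against the uniform $L^1_{s'}\log L$ control of the previous step are of order $R^{-\alpha}$ for an explicit $\alpha=\alpha(s,\gamma)$; the admissible range of $\sigma$ in the statement corresponds precisely to the achievable exponents $\alpha/|\gamma|$. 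Invoking $\D_0(f)\geq \lambda_0 H(f|\M)$ from Theorem~\ref{thm:entropy_method_hard_potentials} then gives
\[
\D_\gamma(f)\;\geq\;R^{-|\gamma|}\bigl(\lambda_0\,H(f|\M)-C R^{-\alpha}\bigr) \qquad \forall R>0,
\]
and optimizing $R\sim H(f|\M)^{-1/\alpha}$ produces $\D_\gamma(f)\geq C\,H(f|\M)^{1+|\gamma|/\alpha}$ along the flow, from which \eqref{eq:rate_of_convergence} follows by the ODE comparison $H(f(t)|\M)\leq C(1+t)^{-\alpha/|\gamma|}$.

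The main obstacle is the tail estimate for $T_R$: absorbing the logarithmic weight $|\log f|$ while keeping the power $R^{-\alpha}$ forces an exchange of moments for $L^p$ regularity (via a Young-type inequality splitting $f|\log f|$ into a polynomial part and a small power of $f$), and it is this exchange that degrades the naive exponent $(s-2)/|\gamma|$ down to the sharper bound $\sigma<-1+(s-2)/|\gamma|$ in the statement. A secondary technical point is the uniform $L^p$ bound itself: because the collision frequency associated with $\bm{L}_\gamma$ vanishes as $|v|\to\infty$ for $\gamma<0$, the usual absorption estimate cannot be closed on its own and has to be combined with the moment bound.
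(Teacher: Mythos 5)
Your truncation strategy, writing $\D_\gamma(f)\geq R^\gamma(\D_0(f)-T_R(f))$ and optimizing over $R$, is a legitimate alternative to the paper's H\"older interpolation $\D_\gamma \geq \D_0^{(\mu-\gamma)/\mu}\D_\mu^{\gamma/\mu}$; both routes reduce the soft-potential production to the Maxwell one plus an error to be estimated. However, there are two substantive gaps in what you wrote, one of which is quantitative and one structural.

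The quantitative gap is the claim of \emph{uniform-in-time} bounds $\sup_t\|f(t)\|_{L^1_s}<\infty$ and $\sup_t\|f(t)\|_{L^p}<\infty$. For $\gamma<0$ the collision frequency $\Sigma_\gamma(v)\sim\langle v\rangle^\gamma$ decays to zero, so the coercivity needed to close a Gronwall estimate on $m_s(t)$ is lost. Testing \eqref{eq:BE} against $\langle v\rangle^s$ and using Povzner gives only
\begin{equation*}
\frac{\d}{\d t}m_s(t)\ \leq\ C_1 m_{s/2}(t) - C_2 M_{s+\gamma}(t) + C_3,
\end{equation*}
and since $s+\gamma<s$ the negative term cannot absorb $m_s$; one gets at best linear growth $m_s(t)\lesssim 1+t$ (Theorem~\ref{thm:evolution_of_moments}). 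Similarly the $L^p$ norm grows polynomially via Theorem~\ref{thm:L_p_bounds}. Uniform bounds are obtained in the paper only \emph{a posteriori} (Theorems~\ref{thm:moments-bounded}--\ref{thm:Lp-bounded}), after the decay \eqref{eq:rate_of_convergence} is already proved. Consequently the constant in your intermediate inequality $\D_\gamma\geq C\,H^{1+\delta}$ must be allowed to deteriorate in time, and the ODE $\ddt H\leq -C(1+t)^{-\alpha}H^{1+\beta}+\xi(t)$ with a source term has to be integrated with some care (cf.\ Lemma~\ref{lemapp:diff_ineqaulity}); the clean ODE $u'\leq -Cu^{1+\delta}$ you invoke would actually give a stronger rate than the statement allows.

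The structural gap is the reduction of $T_R(f)$ to a purely local integral $\int_{|v|>R/2}f(|\log f|+|v|^2+1)\d v$. Applying $(h-h')\log(h/h')\leq(h+h')(|\log h|+|\log h'|)$ together with the pre--post collision symmetry leaves, besides the local terms, cross terms of the form
\begin{equation*}
\int_{|v-v_*|>R}b(\cos\theta)\,f(v)\M(v_*)\,\abs{\log f(v')}\,\d v\,\d v_*\,\d\sigma,
\end{equation*}
and the argument $v'$ ranges over the whole space. Controlling $\abs{\log f(v')}$ requires a \emph{pointwise lower bound} on $f$: this is exactly the role played in the paper by the modification $f_\delta=(1-\delta)f+\delta\M$ (Lemmas~\ref{lem:entropy_connection}--\ref{lem:dissipation_connection}) and, in Section~\ref{sec:maxwellian_lower_bounds}, by the instantaneous Maxwellian lower bound. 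Your Young-type split of $f\abs{\log f}$ handles the region $f\geq 1$, but for the nonlocal logarithm you would still need a quantitative lower bound on $f$ or the $\delta$-modification; as written this term is not controlled.

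In short: the truncation-in-$R$ comparison is a fine alternative decomposition to the paper's H\"older step, but the proof sketch needs (a) time-dependent norms with the corresponding modified ODE lemma, and (b) a mechanism (lower bound or $f_\delta$-modification) to tame the nonlocal logarithm in $T_R$.
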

 {The strategy of the proof is to obtain the inequality
 $$\D_\gamma(f) \geq \D_0(f)^{\frac{\mu-\gamma}{\mu}}\D_{\mu}(f)^{\frac{\gamma}{\mu}}$$
 for some $\gamma < 0 < \mu$ by means of interpolation estimates and deduce from it the inequality 
 \begin{equation}\label{eq:entr}
\D_{\gamma}(f) \geq C(f)^{\frac{\gamma}{\mu}}\,H(f|\M)^{1-\frac{\gamma}{\mu}}\end{equation}
where $C(f)$ is an explicit functional involving norms of $f$ in appropriate $L^1_{\kappa_1}$ and $L^1_{\kappa_2} \log L$ spaces, for a suitable $\kappa_1,\kappa_2$. To use this inequality to deduce Theorem \ref{thm:rate_of_convergence} one needs to control $C(f)$ along the flow of the equation. This is achieved by obtaining the following:}
\begin{enumerate}[(i)]
\item {Explicit time dependent upper} bound on the moments $m_{s}(f(t))$ of the solutions.
\item {Explicit time dependent upper} bound on the $L^{p}$-norms $\|f(t)\|_{p}$.
\item {Pointwise} Gaussian lower bounds for the solutions $f(t,v)$.
\end{enumerate}
The methods we use to obtain the above estimations are inspired by the
works \cite{Lu} and \cite{ToVi}, that deal with a similar problem
related to the non-linear Boltzmann equation. One consequence of
Theorem \ref{thm:rate_of_convergence} is that one can immediately use
interpolation in order to show uniform-in-time bounds of moments and
$L^p$ norms of the solution, assuming that a large enough moment and
$L^q$ norm is initially finite; see Theorems \ref{thm:moments-bounded}
and \ref{thm:Lp-bounded} for a precise statement.

\medskip
Our second main result concerns the decay of the solution to \eqref{eq:BE} for a more restrictive class of initial datum satisfying {a} strong Gaussian {control of the form}
$$\int_{\R^d}\M(v)^{1-p}f_0(v)^p\d v<\infty$$
for some $p >1.$ In this case, one can obtain a better rate of
decay---one of the form of a stretched exponential:

\begin{thm}\label{thm:rate_of_convergence_exp}
  Let $f_0\in L^1\pa{\R^d}$ be a non-negative function such that
  $f_0\in L^p\pa{\R^d}$ for some $p>1$ {and where $d\geq 2$}. Assume
  that the angular kernel, $b$, is bounded and satisfies
  \begin{equation}\label{eq:blower}
    b(x)\geq b_0\pa{1-x^2}^{\frac{\nu}{2}},
  \end{equation}
  for some $b_0>0$, $0\leq \nu \leq 1$. Then, if 
  $$H_p(f_0)=\int_{\R^d}\M(v)^{1-p}f_0(v)^p\d v<\infty$$
  we have that for any $t_0>0$ there exist two uniform constants
  $C^{(1)}_{t_0},C^{(2)}_{t_0}>0$ depending only on
  $d,\gamma,p,b_0,\nu,t_0$ and $H_p(f_0)$ such that any non-negative
  solution to {\eqref{eq:BE}} with initial data $f_0$, $f(t)$,
  satisfies
  \begin{equation}
    \label{eq:almost_cerc_exp}
    \D_\gamma (f(t)) \geq \frac{C^{(1)}_{t_0}H\pa{f(t)|\M}}{\abs{\log\pa{C^{(2)}_{t_0}H\pa{f(t)|\M}}}^{\frac{\abs{\gamma}}{2}}},\quad \forall t \geq t_0. 
  \end{equation}
  As a consequence we can find appropriate constants
  $C_{1}, \lambda_{1} >0$ depending on $d,\gamma,p,b_0$ such that
  \begin{equation}
    \label{eq:stretched_exp_decay}
    H\pa{f(t)|\M} \leq C_{1}\exp\pa{-\lambda_{1}t^{\frac{2}{2+\abs{\gamma}}}}.
  \end{equation}
\end{thm}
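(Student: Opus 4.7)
The plan is to first establish the functional inequality \eqref{eq:almost_cerc_exp} by comparing the soft-potential entropy production with its Maxwellian counterpart via a truncation argument, and then integrate the resulting differential inequality to obtain \eqref{eq:stretched_exp_decay}. Since $\gamma < 0$, for any cutoff $R > 0$ one has $|v - v_*|^{\gamma} \geq R^{-|\gamma|}\mathbf{1}_{\{|v-v_*| \leq R\}}$, so using the representation \eqref{eq:DB},
\begin{equation*}
\D_\gamma(f) \,\geq\, R^{-|\gamma|}\bigl(\D_0(f) - T_R(f)\bigr),
\end{equation*}
where $T_R(f)$ denotes the restriction of $\D_0(f)$ to $\{|v-v_*| > R\}$. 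Theorem \ref{thm:entropy_method_hard_potentials}, applied with the bounded angular kernel $b$, gives $\D_0(f) \geq \lambda_0 H(f|\M)$; the crux is thus to prove a Gaussian-in-$R$ tail bound $T_R(f(t)) \leq C_{t_0}\, e^{-\alpha R^{2}}$ valid for $t \geq t_0$. Once this is done, choosing $R^{2} = \kappa\,|\log(C^{(2)}_{t_0}H(f(t)|\M))|$ with $\kappa$ large enough absorbs $T_R$ into half of $\lambda_0 H(f(t)|\M)$ and yields \eqref{eq:almost_cerc_exp}.

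The tail estimate relies on the propagation of $H_p(f(t))$ along the flow. The first step would be to show $\sup_{t\geq 0} H_p(f(t)) \leq K_p < \infty$ with $K_p$ depending only on $p$, $b$ and $H_p(f_0)$: differentiating $H_p(f(t))$, splitting $\bm{L}_\gamma f$ into loss and gain, discarding the non-positive loss term and applying H\"older's inequality in the Gaussian measure $\M(v)\M(v_*)$ to the gain reduces the argument to a Gronwall estimate in which the lower bound \eqref{eq:blower} ensures that angular averages remain non-degenerate. With $H_p(f(t))$ under control, on $\{|v - v_*| > R\}$ the parallelogram identity $|v|^{2}+|v_*|^{2} \geq |v-v_*|^{2}/2$ yields
\begin{equation*}
\M(v)\M(v_*) \,\leq\, C\,e^{-R^{2}/8}\,\sqrt{\M(v)\M(v_*)},
\end{equation*}
and the integrand of $T_R(f)$ can be bounded pointwise by a polynomial combination $C_p\bigl(h(v')^{p} + h(v)^{p} + 1\bigr)$ with $h = f/\M$. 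Integrating against $\sqrt{\M(v)\M(v_*)}$, exploiting the pre/post collisional invariance of $\M(v)\M(v_*)$ to move primed variables back, and using $\int \M h^{p} = H_p(f(t)) \leq K_p$, then delivers $T_R(f(t)) \leq C\,e^{-\alpha R^{2}}$ uniformly for $t \geq t_0$.

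Finally, setting $y(t) := H(f(t)|\M)$, the $H$-theorem combined with \eqref{eq:almost_cerc_exp} yields, for $t\geq t_0$,
\begin{equation*}
-y'(t) \,\geq\, \frac{C^{(1)}_{t_0}\, y(t)}{\bigl|\log(C^{(2)}_{t_0}\,y(t))\bigr|^{|\gamma|/2}}.
\end{equation*}
Substituting $u(t) := -\log(C^{(2)}_{t_0}\,y(t))$, so that $u'(t) = -y'(t)/y(t)$, transforms the inequality into $u'(t) \geq c\,u(t)^{-|\gamma|/2}$, equivalently $\tfrac{\d}{\dt}\bigl(u(t)^{1+|\gamma|/2}\bigr) \geq c'$; integrating from $t_0$ to $t$ produces $u(t) \gtrsim t^{2/(2+|\gamma|)}$, which is \eqref{eq:stretched_exp_decay}. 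The main obstacle is the propagation of $H_p(f(t))$: the soft-potential weight $|v-v_*|^{\gamma}$ is singular rather than coercive, and closing the Gronwall argument for this strongly Gaussian-weighted $L^p$ quantity requires careful manipulations together with the non-degeneracy furnished by \eqref{eq:blower} and the dimensional assumption $d\geq 2$.
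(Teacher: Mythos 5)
Your overall strategy — truncate the entropy production at radius $R$, control the tail using the Gaussian weight and the propagation of $H_p(f(t))$, then optimise over $R$ and integrate the differential inequality — is the same in spirit as the paper's, which works with $\mathbf{\Gamma}_{a,2}(f)$ (the entropy production associated to the unphysical kernel $e^{a|v-v_*|^2}b(\cos\theta)$) rather than with a sharp cut-off $\mathbf{1}_{\{|v-v_*|>R\}}$. The final ODE step $u^{|\gamma|/2}u' \gtrsim 1$ is also correct.

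However, there are two genuine gaps, and you have misidentified where the real work lies.

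First, the propagation of $H_p$ along the flow, which you regard as the main obstacle requiring a Gronwall argument (and which you propose to attack by discarding the loss term — this would give a bound that \emph{grows} in time, not a uniform one), is in fact immediate. The paper's Lemma \ref{lem:lyapunov_functionals} shows that $\int \M\,\mathbf{\Phi}(f/\M)\,\d v$ is non-increasing for \emph{every} convex $\mathbf{\Phi}$; taking $\mathbf{\Phi}(x)=x^p$ gives $H_p(f(t))\leq H_p(f_0)$ with no computation (Proposition \ref{cor:H_p_decreases}). Neither \eqref{eq:blower} nor $d\geq 2$ enters here.

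Second, and more seriously, the pointwise inequality you invoke, namely that the integrand $(h(v')-h(v))\log\frac{h(v')}{h(v)}$ is bounded by $C_p\bigl(h(v')^p+h(v)^p+1\bigr)$, is simply false: expanding as $h(v')\log h(v')-h(v')\log h(v)-h(v)\log h(v')+h(v)\log h(v)$, the cross term $-h(v')\log h(v)$ is unbounded as $h(v)\to 0^+$ with $h(v')$ bounded away from zero. This is precisely where the assumptions you under-used are needed: the hypothesis \eqref{eq:blower} and $d\geq 2$ are what allow the paper to prove the \emph{instantaneous creation of a pointwise Maxwellian lower bound} $f(t,v)\geq A\,e^{-B|v|^2}$ for $t\geq t_0$ (Theorem \ref{thm:instantaneous_creation_of_maxwellian_lower_bound}), which in turn gives $-\log h(v)\leq |\log A'|+B'|v|^2$ and lets one absorb the $|v|^2$ growth into the Gaussian weight (Corollary \ref{cor:upper_bound_on_D_exp_with_lower_bound_on_f}). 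Without this lower bound your tail estimate $T_R(f(t))\leq Ce^{-\alpha R^2}$ does not close. In addition, when you integrate $h(v)^p$ against $\sqrt{\M(v)\M(v_*)}$ you obtain $\int\M(v)^{1/2-p}f(v)^p\d v$, not $H_p(f)=\int\M^{1-p}f^p$; the missing half power of the Gaussian must also be recovered, which is another place where the Maxwellian lower bound (or a more careful split of the weight, as in the paper's Lemma \ref{lem:upper_bound_on_D_exp} with the constraint on $a$) is used.
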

 
The above decay rate is similar to that obtained for the linearised
Boltzmann equation in \cite{caflisch}, yet with a less restrictive
condition on the initial datum. Indeed, the condition in
\cite{caflisch} involves a \emph{pointwise} Gaussian decay of the type
$$\sup_{v\in\R^{d}} \exp\pa{a |v|^{2}}|f(v)| < \infty$$
for some $a \in (0,1/4).$ 

The proof of Theorem \ref{thm:rate_of_convergence_exp} uses a suitable
improvement of the interpolation inequality between $\D_\gamma$ and
$\D_0$, which involves now the entropy production associated to some
(non-physical) interaction kernel of the form
$$B(v-\vb,\sigma)=\exp(a|v-\vb|)\,b(\cos\theta).$$
This is reminiscent of a similar approach used in the study of the
entropy production associated to the Becker-D\"oring equation {in}
\cite{ACL}. An additional ingredient of the proof is the instantaneous
generation of a Maxwellian lower bound to the solutions of
\eqref{eq:BE}, which is the reason why assumption \eqref{eq:blower} is
needed.

\medskip
We also consider the non-cutoff case briefly in Section
\ref{sec:nonc}. If one assumes that
\begin{equation}
  \label{eq:bnoncut-intro}
  c_{0}|\theta|^{-(d-1)-\nu} \leq b(\cos\theta)
  \leq c_{1}|\theta|^{-(d-1)-\nu}, \qquad \nu \in (0,2)
\end{equation}
for certain positive constants $c_{1} \geq c_{0} >0$ then the cutoff
assumption \eqref{eq:grad} is not satisfied. The spectral gap
properties of the \emph{linearised} Boltzmann equation are
well-understood also in this case \cite{strain,gress}, and by
following the technique in \cite{strain} we show a analogous result
for the linear Boltzmann equation: if $\gamma + \nu > 0$ the operator
$\bm{L}_\gamma$ has a spectral gap in the space $L^2(\M^{-1})$ (see
Proposition \ref{propo:spectral}). Since we are interested in
inequalities involving the logarithmic entropy, we may wonder whether
a similar linear inequality holds true for the entropy production
$$\D(f)=-\int_{\R^{d}}\bm{L}f \log \left(\frac{f}{\M}\right)\d v.$$
While we have not been able to prove this, we conjecture that it is
indeed the case. More precisely:
\begin{conj}
  For a non cut-off collision kernel
  $B(v-\vb,\sigma)=|v-\vb|^{\gamma}b(\cos\theta)$ with
  $\gamma \in (-d,0)$ and $b(\cdot)$ satisfying \eqref{eq:bnoncut-intro}
  such that
  $$\gamma+\nu >0$$
  there exists $\lambda_{\gamma,b} >0$ such that
  \begin{equation}
    \label{eq:DfLin}
    \D(f) \geq \lambda_{\gamma,b}H(f|\M)\end{equation}
  for all $f \geq 0$ with unit mass.
\end{conj}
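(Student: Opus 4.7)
The natural first step is to pass from $\D(f)$ to a Dirichlet form by applying the elementary inequality $(a-b)\log(a/b)\geq 4\bigl(\sqrt{a}-\sqrt{b}\bigr)^{2}$ to $h=f/\M$. Together with the elastic-collision identity $\M(\v)\M(\vb)=\M(\v')\M(\vb')$, this gives
\begin{equation*}
\D(f)\geq 2\int_{\R^{d}\times\R^{d}\times\sph} B(\v-\vb,\sigma)\,\M(\v)\M(\vb)\bigl(\sqrt{h(\v')}-\sqrt{h(\v)}\bigr)^{2}\d\v\d\vb\d\sigma.
\end{equation*}
The right-hand side is exactly the Dirichlet form associated to the linearisation of $\bm{L}$ around $\M$ acting on $\sqrt{h}$, and it controls the squared distance $\|\sqrt{h}-1\|_{L^{2}(\M)}^{2}$ (equivalently, a quadratic deviation of $f$ from $\M$).

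Next, I would appeal to the anisotropic coercivity estimates of Gressman--Strain \cite{strain,gress}: under the condition $\gamma+\nu>0$, the above Dirichlet form dominates a weighted fractional Sobolev norm of $\sqrt{h}-1$, yielding a spectral gap for the linearised operator in $L^{2}(\M^{-1})$. This already implies \eqref{eq:DfLin} in the near-equilibrium regime, where $H(f|\M)\simeq\|\sqrt{h}-1\|_{L^{2}(\M)}^{2}$. Far from equilibrium, my strategy would be to mimic the interpolation scheme behind \eqref{eq:entr}: split $B=|\v-\vb|^{\gamma}b(\cos\theta)$, bound $\D(f)$ from below by a geometric interpolation between the entropy production of a non-cutoff Maxwell kernel (for which a variant of Theorem \ref{thm:entropy_method_hard_potentials} should hold once the fractional smoothing supplied by $b$ is exploited via a mild angular truncation) and the hard-potential entropy production $\D_{\mu}$ for some $\mu>0$, using precisely $\gamma+\nu>0$ to absorb the diagonal singularity of $|\v-\vb|^{\gamma}$.

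The main obstacle---and almost certainly the reason the statement is stated only as a conjecture---is the passage from the linearised $L^{2}$ spectral gap to a genuinely \emph{nonlinear} entropy inequality valid for all non-negative densities $f$ of unit mass. The classical Gross logarithmic Sobolev inequality controls $H(f|\M)$ by the \emph{local} Dirichlet energy $\int|\nabla\sqrt{h}|^{2}\M\d\v$, but the jump-type carré du champ associated to $\bm{L}$, with its $|\v-\vb|^{\gamma}$ diagonal singularity and fractional angular kernel $b(\cos\theta)$, is not covered by the standard log-Sobolev theory. Closing the argument would require either establishing a fractional log-Sobolev inequality for $\M$ adapted to this non-local form, or combining the linearised gap with propagation of moments and $L^{p}$ bounds along the flow of \eqref{eq:BE} (in the spirit of Theorems \ref{thm:rate_of_convergence} and \ref{thm:rate_of_convergence_exp}) in order to bootstrap the quadratic bound into an entropic one. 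Either route demands delicate harmonic-analytic control near the diagonal $\v=\vb$, where the negative power $\gamma$ is most dangerous and where the fractional smoothing coming from $\nu>0$ must be fully exploited.
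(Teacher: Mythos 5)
This statement is stated as a \emph{conjecture}, and the paper does not prove it: the authors write explicitly that they have not been able to establish \eqref{eq:DfLin}. What the paper does prove is the quadratic analogue, Proposition~\ref{propo:spectral}, namely that when $\gamma+\nu>0$ the Dirichlet form $\dD(f)$ dominates $\lambda\|f-\varrho_f\M\|^{2}_{L^{2}(\M^{-1})}$, by adapting the argument of \cite{strain}. Your sketch correctly reproduces this state of the art, and your diagnosis --- that the remaining difficulty is passing from a linearised $L^{2}$ (or ``square-root'') coercivity estimate to a genuinely entropic inequality valid far from equilibrium --- is precisely why this remains a conjecture. So your self-assessment is accurate: you do not have a proof, and neither does the paper.

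Two further cautions about the sketch itself. After the reduction $(a-b)\log(a/b)\geq 4(\sqrt a-\sqrt b)^{2}$, the quantity $\int B\,\M\M\,(\sqrt{h'}-\sqrt h)^{2}$ is the Dirichlet form $\dD$ evaluated at $\M\sqrt h$, and what it controls is $\|\sqrt h-1\|^{2}_{L^{2}(\M)}$; but one has $H(f|\M)\geq\|\sqrt h-1\|^{2}_{L^{2}(\M)}$ (from $u\log u\geq u-1$ with $u=\sqrt h$), so this control goes in the \emph{wrong} direction for \eqref{eq:DfLin} unless additional pointwise bounds on $h$ are available. Second, the far-from-equilibrium interpolation you propose relies on a non-cutoff Maxwell analogue of Theorem~\ref{thm:entropy_method_hard_potentials}, which is not established anywhere in the paper; an angular truncation would reintroduce a cutoff and discard exactly the coercivity gained from the singularity of $b$ near $\theta=0$. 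These are the concrete reasons the route does not close, and they match the authors' own reasons for leaving \eqref{eq:DfLin} open.
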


A linear inequality like \eqref{eq:DfLin} is usually refer to as a
\emph{modified Logarithmic Sobolev inequality} and is known to be
equivalent to the exponential decay of $H(f(t)|\M)$ along the flow of
solutions to the Boltzmann equation
\begin{equation}
  \label{eq:BEnon}
  \dfrac{\d}{\d t}f(t,v)=\bm{L}f(t,v),
  \qquad f(0,\cdot)=f_{0} \in L^{1}_{2}\log L
\end{equation}
(see for instance \cite{tetali}). Such a modified Logarithmic Sobolev
inequality would imply the spectral gap inequality
\eqref{eq:spectralgap} with
$$\lambda \geq \frac{\lambda_{\gamma,b}}{2}$$
(but is not equivalent to it).

\subsection{Organization of the paper}

{The structure of the paper is as follows: Section
  \ref{sec:entropy_inequality}} is dedicated to the main
entropy-entropy production inequality of the type \eqref{eq:entr} {and
  to the investigation of points \textit{(i)--(iii)}, leading to the
  proof of our first main result} in Section
\ref{sec:rate_of_convergence}. In Section
\ref{sec:maxwellian_lower_bounds} we show the creation of pointwise
Maxwellian lower bounds {under certain restrictions on the angular
  kernel. This will not only give an alternative to point
  \textit{(iii)} (which will not improve the rate of convergence for
  Theorem \ref{thm:rate_of_convergence}), but will be crucial in the
  proof of Theorem \ref{thm:rate_of_convergence_exp}, which we will
  give in Section \ref{sec:exp}}. In Section \ref{sec:nonc}, we
discuss the case of the linear Boltzmann equation with soft potential
\emph{without the cut-off} assumption
{and show the existence of a spectral gap for a certain range of the
  parameters. This is done by an adaptation of similar results from
  \cite{strain}}. {The last pages of the paper are dedicated to
  several Appendices that provide additional details that we felt
  would hinder the flow of the main work.}

\section{The Entropy Inequality and Technical
  Estimates}\label{sec:entropy_inequality}

The goal of this section is to find an appropriate entropy-entropy
production inequality associated to $\bm{L}_{\gamma}$, from which we
will be able to obtain a quantitative estimation on the rate of
convergence to equilibrium.

In order to achieve this we start by rewriting the operator
$\bm{L}_{\gamma}$ as the sum of a gain and a loss operators. Due to
the cut-off assumption \eqref{eq:grad} the operator $\bm{L}_{\gamma}$
can be decomposed in the following way:
$$\bm{L}_{\gamma}f(v)=\bm{K}_\gamma f(v)-\Sigma_\gamma(v)f(v),$$
where
\begin{equation}\label{eq:def_of_K}
\bm{K}_\gamma f(v)=\int_{\R^d\times \sph} \abs{v-v_\ast}^\gamma b\pa{\cos{\theta}}f\pa{v^\prime}\M\pa{v_\ast^\prime}\d\vb\d\sigma
\end{equation} 
and the collision frequency $\Sigma_{\gamma}$ is given by
\begin{equation}\label{eq:def_of_Sigma}
\Sigma_\gamma(v)=\int_{\R^d\times \sph} b(\cos\theta)\abs{v-v_\ast}^\gamma\M\pa{v_\ast}\d\vb\d\sigma=\int_{\R^{d}} \abs{v-v_\ast}^\gamma\M\pa{v_\ast}\d\vb.\end{equation}
The loss operator, of a simpler nature, satisfies the following (see \cite{caflisch} or \cite[Lemma 6.1]{Lu} for a detailed proof):
\begin{lem}\label{lem:loss_operator}
For any {$\gamma\in \R$} there exists explicit constants $C_1,C_2>0$, depending only on $\gamma,d$ and $\norm{b}_1$ such that
\begin{equation}\label{eq:loss_operator}
C_1\pa{1+\abs{v}}^\gamma \leq \Sigma_\gamma(v) \leq C_2\pa{1+\abs{v}}^\gamma.
\end{equation}
\end{lem}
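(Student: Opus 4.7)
The first step is to exploit the cut-off normalization $\|b\|_1 = 1$ to eliminate the $\sigma$-integral, reducing the claim to a statement about the convolution
$$\Sigma_\gamma(v) = \int_{\R^d}|v-\vb|^\gamma \M(\vb)\d\vb.$$
Since $(1+|v|)^\gamma \asymp 1+|v|^\gamma$, it suffices to establish the two-sided bound
$\Sigma_\gamma(v) \asymp (1+|v|)^\gamma$
by treating the large-$|v|$ and small-$|v|$ regimes separately. I would fix a threshold, say $|v|\geq R$ for some $R\geq 2$, and bound $\Sigma_\gamma$ between constant multiples of $|v|^\gamma$ there; for $|v|<R$, I would argue by positivity and continuity that $\Sigma_\gamma(v)$ is bounded above and below by absolute positive constants, which in turn are comparable to $(1+|v|)^\gamma$ on this compact set.

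For the upper bound in the regime $|v|\geq R$, I would split the integral into the regions $\{|\vb|\leq |v|/2\}$ and $\{|\vb|>|v|/2\}$. On the first region $|v-\vb|\geq |v|/2$, so $|v-\vb|^\gamma \leq 2^{|\gamma|}|v|^\gamma$ when $\gamma<0$ (and a similar estimate using the triangle inequality $|v-\vb|^\gamma\leq 2^\gamma(|v|^\gamma+|\vb|^\gamma)$ when $\gamma\geq 0$), and the Gaussian integrates to at most $1$. On the second region $\M(\vb)\leq C\exp(-|v|^2/8)$, and a further splitting into $\{|v-\vb|\leq 1\}$ and $\{|v-\vb|>1\}$ allows one to absorb the local singularity of $|v-\vb|^\gamma$ (which is integrable because $\gamma>-d$) against a Gaussian decay in $|v|$ that beats any polynomial; this contribution is therefore $o(|v|^\gamma)$ as $|v|\to\infty$.

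For the lower bound in the regime $|v|\geq R$, I would restrict the integral to $\{|\vb|\leq 1\}$, where $|v-\vb|\leq |v|+1\leq 2|v|$ and $\M(\vb)$ is bounded below, yielding
$$\Sigma_\gamma(v)\geq \bigl(\min_{|\vb|\leq 1}\M(\vb)\bigr)\int_{|\vb|\leq 1} |v-\vb|^\gamma\d\vb \geq c\,|v|^\gamma$$
when $\gamma<0$; for $\gamma\geq 0$ the same region gives $|v-\vb|\geq |v|-1\geq |v|/2$ and the analogous conclusion. For $|v|<R$, lower bounds follow from restricting to a ball of radius $1$ around $v$, on which $\M$ has a uniform positive lower bound (depending on $R$) and on which $\int |v-\vb|^\gamma\d\vb$ equals an absolute positive constant by translation invariance (using $\gamma>-d$).

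There is no serious obstacle here: the estimate is a standard convolution-with-Gaussian computation. The only point requiring mild care is the use of $\gamma>-d$ to ensure local integrability of $|\cdot|^\gamma$ near the origin, which is needed both in the upper bound (when controlling the region where $v_*$ is near $v$) and in the lower bound on compact sets; beyond that, all constants $C_1,C_2$ are explicit in terms of $d$, $\gamma$ and $\|b\|_1$ through the Gaussian and surface-measure factors.
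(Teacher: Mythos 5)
Your proof is correct. Note that the paper does not actually supply its own proof of this lemma---it defers to \cite{caflisch} and \cite[Lemma 6.1]{Lu}---so there is no in-text argument to compare against; but the approach you describe (normalize out the $\sigma$-integral using $\|b\|_1$, then estimate the convolution $|\cdot|^\gamma * \M$ by splitting into $\{|v_*|\le|v|/2\}$ and $\{|v_*|>|v|/2\}$ for large $|v|$, and use continuity, positivity and local integrability of $|\cdot|^\gamma$ (requiring $\gamma>-d$) on the compact set $\{|v|\le R\}$) is the standard and essentially the only natural one for a Gaussian-against-power convolution estimate, and every step you outline goes through. One small remark: the lemma as stated reads ``for any $\gamma\in\R$,'' but as you correctly observe, the integral defining $\Sigma_\gamma$ diverges unless $\gamma>-d$; that restriction is tacit throughout the paper and your proof uses it in exactly the right places (local integrability near $v_*=v$ in both the upper and lower bounds).
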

\begin{rem}\label{rem:nospect}
{From the above estimate one can easily infer that} the range of the mapping $-\Sigma_{\gamma}$ is given by $[-\nu_{0},0)$ for $\nu_{0}=\inf_{v\in \R^{d}}\Sigma_{\gamma}(v)$. Using the fact that $\bm{K}_{\gamma}$ is a compact operator in the space $L^{2}(\M^{-1})$ (see for instance \cite{caflisch} or \cite{levermore,guo} for the proof {in the linearised setting}), one deduces that the essential spectrum of $\bm{L}_{\gamma}$ in that space contains  $[-\nu_{0},0)$. In particular, $\bm{L}_{\gamma}$ does not exhibit a spectral gap in that space. 
\end{rem}
We are now ready to state our main entropy inequality.
\begin{thm}\label{thm:entropy_inequality}
Let $\gamma\in(-d,0)$ and let $f\in L^{1}_{\mu}\log L\pa{\R^{d}} \cap L^{1}_{\mu+2}(\R^{d})$ {for some $\mu>0$, be a non-negative function} with unit mass. {Then}
\begin{equation}\label{eq:DgD0Dmu}\D_\gamma(f) \geq \D_0(f)^{\frac{\mu-\gamma}{\mu}}\D_{\mu}(f)^{\frac{\gamma}{\mu}}\geq \lambda_{0}^{1-\frac{\gamma}{\mu}}\D_{\mu}(f)^{\frac{\gamma}{\mu}}\,H\pa{f|\M}^{1-\frac{\gamma}{\mu}},\end{equation}
and it also holds that
\begin{multline}
  \label{eq:entropy_inequality}
  \D_{\mu} (f) \geq C_{s,d}\Bigg(\int_{\R^d}\pa{1+\abs{v}}^{\mu} f(v)\log f(v)\d v \\
  +\int_{\R^d}\pa{1+\abs{v}}^{\mu+2} f(v) \d v-\int_{\R^d}\bm{K}_{\mu}(f)(v)\log f(v) \d v \Bigg)
\end{multline}
where $C_{\mu,d}$ is a universal constant that depends only on $\mu$
and $d$, and $\lambda_{0}$ is the positive parameter (depending on
$b$) appearing in Theorem \ref{thm:entropy_method_hard_potentials} for
Maxwell molecules.
\end{thm}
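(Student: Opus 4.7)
I would split the argument into two parts, one for each inequality in the theorem.

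\emph{The interpolation (\ref{eq:DgD0Dmu}).} The three productions $\D_\gamma(f)$, $\D_0(f)$, $\D_\mu(f)$ share a common non-negative integrand
$$F(v,v_*,\sigma) := \tfrac{1}{2}\,b(\cos\theta)\,\M(v)\M(v_*)\,(h'-h)\log(h'/h), \qquad h = f/\M,$$
multiplied respectively by $|v-v_*|^\gamma$, $1$, and $|v-v_*|^\mu$. With Hölder conjugate exponents $p = (\mu-\gamma)/\mu$ and $q = (\mu-\gamma)/(-\gamma)$, both strictly greater than $1$ since $\gamma<0<\mu$, the key algebraic identity $\gamma/p + \mu/q = 0$ gives the factorisation $1 = |v-v_*|^{\gamma/p}\,|v-v_*|^{\mu/q}$. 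Writing $F = F^{1/p}F^{1/q}$ and applying Hölder to $\D_0(f) = \int F\,dv\,dv_*\,d\sigma$ yields $\D_0(f) \leq \D_\gamma(f)^{1/p}\,\D_\mu(f)^{1/q}$, which rearranges (using $p/q = -\gamma/\mu$) to the first inequality of (\ref{eq:DgD0Dmu}). Inserting $\D_0(f) \geq \lambda_0 H(f|\M)$ from Theorem \ref{thm:entropy_method_hard_potentials} and raising to the power $(\mu-\gamma)/\mu$ gives the second.

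\emph{The lower bound (\ref{eq:entropy_inequality}).} I would start from the dissipation identity $\D_\mu(f) = -\int \bm{L}_\mu f\log(f/\M)\,dv$ together with the splitting $\bm{L}_\mu = \bm{K}_\mu - \Sigma_\mu$ and the mass conservation $\int\bm{K}_\mu f = \int \Sigma_\mu f$. Using $\log\M(v) = -|v|^2/2 - (d/2)\log(2\pi)$, the additive constants cancel and one obtains the exact decomposition
$$\D_\mu(f) = \int\Sigma_\mu f\log f\,dv - \int\bm{K}_\mu f\log f\,dv + \tfrac{1}{2}\int\Sigma_\mu(v) f(v)|v|^2\,dv - \tfrac{1}{2}\int\bm{K}_\mu f(v)|v|^2\,dv.$$
The term $-\int\bm{K}_\mu f\log f$ already matches the target and is simply carried along. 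For the gain-type energy term, the pre/post-collisional involution $(v,v_*,\sigma)\leftrightarrow(v',v_*',\sigma')$ (under which $\M\M_* = \M'\M_*'$ and $|v-v_*|$ is preserved) rewrites $\int\bm{K}_\mu f|v|^2\,dv = \int|v-v_*|^\mu b(\cos\theta)\M(v_*)f(v)|v'|^2\,dv\,dv_*\,d\sigma$, and the spherical average $\int_\sph |v'|^2 b(\cos\theta)\,d\sigma = \frac{1+C_b}{2}|v|^2 + \frac{1-C_b}{2}|v_*|^2$ with $C_b = \int_\sph\cos\theta\,b\,d\sigma \in [-1,1]$ yields the clean identity
$$\tfrac{1}{2}\int\Sigma_\mu f|v|^2 - \tfrac{1}{2}\int\bm{K}_\mu f|v|^2 = \tfrac{1-C_b}{4}\int f(v)\bigl[\Sigma_\mu(v)|v|^2 - \Phi_\mu(v)\bigr]\,dv,$$
where $\Phi_\mu(v) := \int|v-v_*|^\mu|v_*|^2\M(v_*)\,dv_*$. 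Since $\Sigma_\mu(v)|v|^2 - \Phi_\mu(v) = \int |v-v_*|^\mu(|v|^2-|v_*|^2)\M\,dv_* \sim |v|^{\mu+2}$ as $|v|\to\infty$, an application of Lemma \ref{lem:loss_operator} combined with routine moment manipulations produces the desired $\int(1+|v|)^{\mu+2}f$ contribution. Finally, Lemma \ref{lem:loss_operator} also replaces $\Sigma_\mu$ in $\int\Sigma_\mu f\log f$ by a universal multiple of $(1+|v|)^\mu$.

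\emph{Main obstacle.} The delicate point is the sign of $f\log f$ when comparing $\int\Sigma_\mu f\log f$ with $\int(1+|v|)^\mu f\log f$: the two-sided bounds $C_1(1+|v|)^\mu \leq \Sigma_\mu \leq C_2(1+|v|)^\mu$ carry different constants, so on $\{f<1\}$ the desired inequality points the wrong way. The plan is to split along $\{f\geq 1\}$ and $\{f<1\}$, apply the opposite bound in each region, and absorb the resulting shortfall into the positive moment $\int(1+|v|)^{\mu+2}f \geq \int f = 1$ by choosing the constant $C_{\mu,d}$ sufficiently small; the hypothesis $f \in L^1_\mu\log L \cap L^1_{\mu+2}$ ensures that every quantity involved is finite. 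A parallel splitting handles the energy-exchange integrand $\Sigma_\mu(v)|v|^2 - \Phi_\mu(v)$, which is negative near $v=0$ but positive and of order $|v|^{\mu+2}$ at infinity, so its negative part can be dominated by a small fraction of the positive $\int(1+|v|)^{\mu+2}f$ term. These book-keeping steps are where the explicit value of $C_{\mu,d}$ is determined.
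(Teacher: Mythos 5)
The interpolation half of your argument is correct and is exactly the paper's: you split $\D_0(f)=\int F\,\d\nu$ with conjugate exponents $p=(\mu-\gamma)/\mu$, $q=-(\mu-\gamma)/\gamma$, apply H\"older, rearrange, and feed in Theorem \ref{thm:entropy_method_hard_potentials}. No issue there.

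The second half, however, is aimed at the wrong inequality. The direction in \eqref{eq:entropy_inequality} is a typographical error in the statement: the paper's own proof establishes, and its later use requires, the \emph{upper} bound
\begin{equation*}
  \D_{\mu}(f) \leq C_{\mu,d}\left(\int_{\R^d}(1+|v|)^{\mu}f\log f\,\d v
  +\int_{\R^d}(1+|v|)^{\mu+2}f\,\d v-\int_{\R^d}\bm{K}_{\mu}(f)\log f\,\d v\right).
\end{equation*}
This is the direction that is actually needed, because $\D_\mu$ enters \eqref{eq:DgD0Dmu} to the negative power $\gamma/\mu<0$, so a lower bound on $\D_\gamma$ demands an upper bound on $\D_\mu$. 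One also sees that the stated lower bound cannot hold in general: at $f=\M$ the left side is $\D_\mu(\M)=0$ while the right-hand side is strictly positive (for small $\mu>0$ it is close to $C_{\mu,d}\int(1+|v|)^2\M\,\d v>0$). Your proposed remedy of ``absorbing the shortfall by choosing $C_{\mu,d}$ small'' also cannot work for the $-\int\bm{K}_\mu f\log f$ term: it appears with coefficient exactly $1$ inside $\D_\mu$ but with coefficient $C_{\mu,d}$ on the right, and since $\log f$ changes sign no single multiplicative constant can reconcile the two without further splitting.

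Once the direction is corrected, the argument is considerably shorter than what you propose. Expanding $\D_\mu$ with $\bm{L}_\mu=\bm{K}_\mu-\Sigma_\mu$,
\begin{equation*}
  \D_\mu(f)=\int\Sigma_\mu f\log f\,\d v-\int\Sigma_\mu f\log\M\,\d v
  +\int\bm{K}_\mu f\log\M\,\d v-\int\bm{K}_\mu f\log f\,\d v,
\end{equation*}
the third term is nonpositive (since $\log\M<0$ and $\bm{K}_\mu f\geq 0$) and is simply discarded. Then $-\log\M(v)=\tfrac{d}{2}\log(2\pi)+\tfrac{|v|^2}{2}$ together with $\Sigma_\mu\leq C_2(1+|v|)^\mu$ from Lemma \ref{lem:loss_operator} gives the claimed upper bound, the contribution $\int(1+|v|)^\mu f$ being absorbed into $\int(1+|v|)^{\mu+2}f$. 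Your observation that $\int\bm{K}_\mu f=\int\Sigma_\mu f$, and your spherical-average computation of $\int_\sph|v'|^2 b\,\d\sigma$, are both correct but unnecessary: dropping $\int\bm{K}_\mu f\log\M$ makes the $\Phi_\mu$ detour, the Povzner-style cancellation, and the sign analysis of $\Sigma_\mu(v)|v|^2-\Phi_\mu(v)$ entirely superfluous.
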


\begin{proof}
  Recall that, for all $\alpha >-d$
$$\D_{\alpha}(f)=\frac{1}{2}\int_{\R^{d}\times\R^{d}\times\mathbb{S}^{d-1}}|v-v_{*}|^{\alpha}b(\cos\theta)\M(v)\M(v_{*})\left(h(v')-h(v)\right)\log \frac{ h(v')}{ h(v)}\d v \d v_{*}\d\sigma.$$
Introducing the measure $\d\nu(v,\vb,\sigma)=\frac{1}{2}b(\cos\theta)\M(v)\M(v_{*})\left(h(v')-h(v)\right)\log \frac{ h(v')}{ h(v)}\d v \d v_{*}\d\sigma$ on $\R^{d}\times\R^{d}\times \sph$ one has
$$\D_{\alpha}(f)=\int_{\R^{d}\times\R^{d}\times\mathbb{S}^{d-1}}|v-v_{*}|^{\alpha}\d\nu(v,v_{*},\sigma)$$
and, using H\"older's inequality {on
$$\D_0(f) = \int_{\R^d} |v-v_{*}|^{\frac{\mu\gamma}{\mu-\gamma}}|v-v_{*}|^{\frac{-\mu\gamma}{\mu-\gamma}}\d\nu(v,\vb,\sigma)$$} 
with $p=\frac{\mu-\gamma}{\mu}$, $q=-\frac{\mu-\gamma}{\gamma}$  we get 
\begin{equation}\label{eq:entropy_inequality_proof_I}
\D_\gamma(f) \geq \D_0(f)^{\frac{\mu-\gamma}{\mu}}\D_{\mu}(f)^{\frac{\gamma}{\mu}}.
\end{equation}
Next, as $\D_{\mu}(f)=-\ds\int_{\R^d}\Lin_{\mu}(f)\log (f/\M)\d v$ we have that
\begin{multline*}
\D_{\mu}(f) \leq \int_{\R^d}\Sigma_{\mu}(v)f(v)\log f(v)\d v + \int_{\R^d}\bm{K}_{\mu}(f)(v)\log \M(v)\d v\\
-\int_{\R^d}\Sigma_{\mu}(v)(f)\log\M(v)\d v -\int_{\R^d}\bm{K}_{\mu}(f)(v)\log f(v)\d v.\end{multline*}
Since $\log\pa{\M(v)}=-\frac{d}{2}\log \pa{2\pi}-\abs{v}^2/2<0$ and $\bm{K}_{\mu}(f)(v) \geq 0$ when $f$ is non-negative we conclude that 
$\ds \int_{\R^d}\bm{K}_{\mu}(f)(v)\log \M(v)\d v \leq 0.$ Moreover, 
using {Lemma \ref{lem:loss_operator}} we find $C_1,C_2>0$, depending only on $d$ and $\mu$ such that
\begin{multline*}
\D_{\mu}(f) \leq C_2 \int_{\R^d}\pa{1+\abs{v}}^{\mu} f(v)\log f(v)\d v + \frac{d}{2}\log\pa{2\pi}C_2 \int_{\R^d}\pa{1+\abs{v}}^{\mu} f(v) \d v\\
+\frac{C_2}{2}\int_{\R^d}\pa{1+\abs{v}}^{\mu+2} f(v)\d v-\int_{\R^d}\bm{K}_{\mu}(f)(v)\log\pa{f(v)}\d v.\end{multline*}
The above, together with \eqref{eq:entropy_inequality_proof_I} and Theorem \ref{thm:entropy_method_hard_potentials} complete the proof.
\end{proof}
The above Theorem is the reason for us to investigate the evolution of
moments and $L^p$ norms of $f$, as well as pointwise lower bounds of
$f$. These are the topics of the following subsections. From now on, we shall always assume that $\gamma \in (-d,0).$

\subsection{The Evolution of Moments}
\label{sec:moments}

The study of moments and their time evolution is fundamental in many
kinetic equations (and other PDEs where ``energy methods'' are
applicable). In the case of the Boltzmann equation, the study of
creation and propagation of moments for soft and hard potentials with
angular cut off is radically different. The linear Boltzmann equation we study here
exhibits properties that reflect a similar moment growth as its
non-linear counterpart.

We recall that notations for the moments of $f$ have been introduced
in {subsection \ref{sec:result}}. {To simplify writing, we will denote
  by
$$m_{s}(t)=m_{s}(f(t,\cdot)), \qquad  M_{s}(t)=M_{s}(f(t,\cdot)) \qquad \forall t \geq 0, \quad s \in \R.$$ }
{For a given $s\in\R$ we define the function
\begin{equation}\label{eq:def_of_bold(w)s}
\bm{w}_s(v)=\int_{\R^d}\abs{v-v_\ast}^{\gamma}\abs{v_\ast}^s \M(v_\ast)\d\vb,
\end{equation}
which will play an important role in the sequel and which satisfies the following estimate (similar in nature to Lemma \ref{lem:loss_operator})} 
\begin{lem}
  \label{lem:boundedness_of_bold(m)}
  For any $s\geq 0$ and $\gamma > -d$, $\bm{w}_{s}$ is a bounded
  function. That is,
  \begin{equation}
    \label{eq:boundedness_of_bold(m)}
    \sup_{v\in\R^d}\bm{w}_s(v) :=
    \norm{\bm{w}_{s}}_{\infty} <\infty.
  \end{equation}
\end{lem}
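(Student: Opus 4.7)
The plan is to split the integral defining $\bm{w}_s(v)$ according to whether $v_*$ is close to or far from $v$, using the fact that $\gamma \in (-d, 0)$ (so that $|v-v_*|^\gamma$ is locally integrable but bounded by $1$ away from the diagonal), and that the Gaussian factor $\M$ tames the polynomial factor $|v_*|^s$. Specifically, I would write
$$\bm{w}_s(v) = \int_{|v-v_*|\leq 1} |v-v_*|^\gamma |v_*|^s \M(v_*)\d v_* + \int_{|v-v_*|>1} |v-v_*|^\gamma |v_*|^s \M(v_*)\d v_*=:I_1(v)+I_2(v),$$
and bound each piece independently of $v$.

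For the near piece $I_1(v)$, the key observation is that the function $v_*\mapsto |v_*|^s \M(v_*)$ is uniformly bounded on $\R^d$ by some constant $C_s$, since $\M$ has Gaussian decay and $s\geq 0$. Pulling this bound out yields
$$I_1(v)\leq C_s \int_{|v-v_*|\leq 1} |v-v_*|^\gamma \d v_*,$$
and after the translation $w=v-v_*$ the right-hand side becomes $C_s \int_{|w|\leq 1}|w|^\gamma \d w$, which is a finite constant independent of $v$ because $\gamma > -d$.

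For the far piece $I_2(v)$, I would use that $\gamma < 0$ together with $|v-v_*|>1$ to conclude $|v-v_*|^\gamma \leq 1$, which gives
$$I_2(v)\leq \int_{\R^d} |v_*|^s \M(v_*)\d v_*,$$
and this quantity is finite since $\M$ has all polynomial moments. Adding the two estimates produces a bound on $\bm{w}_s(v)$ independent of $v$, establishing \eqref{eq:boundedness_of_bold(m)}.

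There is no real obstacle in this argument; the only subtle point is deciding how to split the singularity of $|v-v_*|^\gamma$ from the moment factor $|v_*|^s$. The splitting at $|v-v_*|=1$ together with the trivial observation that $|v_*|^s\M(v_*)$ is a bounded function of $v_*$ is what makes both regions yield constants that do not depend on $v$, which is the precise content of the lemma.
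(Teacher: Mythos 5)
Your argument is correct, and it is the standard and essentially unique way to prove this fact; the paper does not actually write out a proof of Lemma \ref{lem:boundedness_of_bold(m)} (it is stated with the remark that it is ``similar in nature to Lemma \ref{lem:loss_operator}'', whose own proof is deferred to \cite{caflisch} and \cite{Lu}). Your splitting of the integral at $|v-v_*|=1$, bounding $|v_*|^s\M(v_*)$ by a constant on the near region to reduce to the local integrability of $|w|^\gamma$ (which needs $\gamma>-d$), and using $|v-v_*|^\gamma\leq 1$ together with the finiteness of all Gaussian moments on the far region, is exactly the intended reasoning and every step is sound. One small remark: the lemma as literally worded allows any $\gamma>-d$, but for $\gamma>0$ the far-region bound $|v-v_*|^\gamma\leq 1$ fails and in fact $\bm{w}_s(v)$ grows like $|v|^\gamma$, so boundedness is false there; your proof correctly uses $\gamma\leq 0$, which is the range actually in force in the paper (the end of Section~\ref{sec:entropy_inequality} fixes $\gamma\in(-d,0)$ once and for all).
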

 The main theorem we prove in this section is the following:
 \begin{thm}
   \label{thm:evolution_of_moments}
   Let $f_0\in L_s^1\pa{\R^d}$ for {$s=2$ or
     $s>2\max\pa{\abs{\gamma},1}$} such that $f_0$ has unit mass and
   let $f(t)$ be the unique solution for \eqref{eq:BE}. If
   $b \in L^{\infty}(\sph) $ and $\|b\|_{1}=1$ there exists a constant
   $C_s$, depending only on $s,\gamma,d$, the collision kernel and
   $\norm{f_0}_{L^1_s\pa{\R^d}}$ such that
   \begin{equation}
     \label{eq:evolution_of_moments}
     m_s(t) \leq C_s (1+t),\quad\quad \forall t\geq 0.
   \end{equation}
 \end{thm}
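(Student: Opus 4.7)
The idea is to compute $\ddt m_s(t)$ in weak form, symmetrize in $\sigma$ using that $b$ is even, and then combine a Povzner-type estimate with Lemmas \ref{lem:loss_operator} and \ref{lem:boundedness_of_bold(m)} to produce a useful differential inequality.

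Concretely, starting from
\[
\ddt m_s(t) = \IRR\IS f(v)\M(\vb)\,b(\cos\theta)\,|v-\vb|^{\gamma}\bigl(|\v'|^{s}-|v|^{s}\bigr)\d v\,\d\vb\,\d\sigma,
\]
the change $\sigma\mapsto-\sigma$ (admissible since $b$ is even) swaps $\v'$ and $\vb'$, so averaging the two resulting representations replaces the integrand by $\tfrac12(|\v'|^{s}+|\vb'|^{s}-2|v|^{s})$. In the base case $s=2$, the collision identity $|\v'|^{2}+|\vb'|^{2}=|v|^{2}+|\vb|^{2}$ collapses this to $\tfrac12(|\vb|^{2}-|v|^{2})$, whence
\[
\ddt m_2(t)=\tfrac12\IR f(v)\bigl(\bm{w}_2(v)-|v|^{2}\,\Sigma_\gamma(v)\bigr)\d v \;\leq\; \tfrac12\|\bm{w}_2\|_{\infty},
\]
by Lemma \ref{lem:boundedness_of_bold(m)} and the non-negativity of the loss term. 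A direct time integration yields $m_2(t)\leq m_2(0)+\tfrac12\|\bm{w}_2\|_{\infty}\,t$, which is the claimed bound.

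For $s>2\max(|\gamma|,1)$ I would invoke a standard Povzner-type inequality: there exist positive constants $\xi_s$ and $K_s$ depending only on $s$, $d$ and $\|b\|_{\infty}$ such that
\[
\tfrac12\IS b(\cos\theta)\bigl(|\v'|^{s}+|\vb'|^{s}-2|v|^{s}\bigr)\d\sigma \;\leq\; -\xi_s|v|^{s}+K_s\bigl(|v|^{s-2}|\vb|^{2}+|v|^{2}|\vb|^{s-2}+|\vb|^{s}\bigr).
\]
Multiplying by $|v-\vb|^{\gamma}f(v)\M(\vb)$ and integrating, using $\Sigma_\gamma(v)\geq C_1(1+|v|)^{\gamma}$ from Lemma \ref{lem:loss_operator} to control the negative contribution from below and Lemma \ref{lem:boundedness_of_bold(m)} to bound the $\bm{w}_k$'s that arise from the positive cross-terms, one arrives at
\[
\ddt m_s(t)\;\leq\; -c_1\IR f(v)|v|^{s}(1+|v|)^{\gamma}\d v + c_2\bigl(1+m_{s-2}(t)+m_2(t)\bigr).
\]
A truncation argument (valid because $s>|\gamma|$) gives $\int f(v)|v|^{s}(1+|v|)^{\gamma}\d v \geq c\,m_s(t)^{(s+\gamma)/s}-c'$, while H\"older's inequality with unit mass yields $m_{s-2}(t)\leq m_s(t)^{(s-2)/s}$. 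Combining these with the already proven bound $m_2(t)\leq C_2(1+t)$ produces a closed differential inequality of the form $\ddt m_s(t)\leq -c\,m_s(t)^{(s+\gamma)/s}+C(1+t)$, from which a standard ODE comparison delivers $m_s(t)\leq C_s(1+t)$.

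The main technical obstacle is establishing the Povzner-type inequality above with a strictly negative leading coefficient $\xi_s$ for $b$ merely bounded and even; this typically requires a careful angular expansion of $|\v'|^{s}$ in powers of $\sin^{2}(\theta/2)$ and the extraction of its sign after $\sigma$-averaging. A secondary subtlety is arranging the bootstrap on the lower-order moments so that the exponent $(s+\gamma)/s$ of the dissipative term dominates those of the competing positive contributions, which is precisely where the hypothesis $s>2\max(|\gamma|,1)$ enters.
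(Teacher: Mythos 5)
Your treatment of the base case $s=2$ is essentially the paper's, and correctly exploits $|\v'|^{2}+|\vb'|^{2}=|v|^{2}+|\vb|^{2}$ together with the boundedness of $\bm{w}_2$ from Lemma \ref{lem:boundedness_of_bold(m)}.

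For $s>2$, however, the proposal has several genuine gaps. First, the Povzner-type estimate you invoke, with cross terms $|v|^{s-2}|\vb|^{2}+|v|^{2}|\vb|^{s-2}$, is too coarse: after integrating against $|v-\vb|^{\gamma}f(v)\M(\vb)$, these produce the moment $m_{s-2}(t)$, which cannot in general be controlled by the dissipation term $M_{s+\gamma}(t)$ since $s-2>s+\gamma$ whenever $\gamma<-2$ (a regime permitted by the hypothesis $-d<\gamma<0$ in dimension $d\geq3$). The paper's Lemma \ref{lem:Povzner} is sharper precisely here: its cross term is $|v|^{s/2}|\vb|^{s/2}$, producing only $m_{s/2}(t)$, and the hypothesis $s>2|\gamma|$ guarantees $s/2<s+\gamma$ so that interpolation against $M_{s+\gamma}$ closes. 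Second, the claimed lower bound $\int f(v)|v|^{s}(1+|v|)^{\gamma}\d v\geq c\,m_s(t)^{(s+\gamma)/s}-c'$ is false: since $(s+\gamma)/s<1$, Jensen's inequality gives $M_{s+\gamma}\leq M_s^{(s+\gamma)/s}$, which is the \emph{reverse} direction, and one can push a small amount of mass to radius $R\to\infty$ to violate the inequality for any fixed $c>0$. Third, even if one grants the differential inequality $\ddt m_s\leq-c\,m_s^{(s+\gamma)/s}+C(1+t)$, the comparison principle delivers $m_s(t)\lesssim(1+t)^{s/(s+\gamma)}$, which is \emph{strictly worse} than $(1+t)$ since the exponent exceeds $1$ for $\gamma<0$. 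The paper sidesteps all three problems by expressing both the positive and the negative contributions as powers of the single quantity $M_{s+\gamma}(t)$, namely $\ddt m_s\leq C_1M_{s+\gamma}^{\,s/(2(s+\gamma))}-C_2M_{s+\gamma}+C_3$ with the negative term carrying the larger exponent (here the hypothesis $s>2|\gamma|$ enters); the right-hand side is then uniformly bounded, and a direct time integration already gives $m_s(t)\leq C_s(1+t)$. You need the sharper Povzner estimate and this ``self-closing'' structure, not an ODE comparison against a source growing like $(1+t)$.
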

 In order {to prove} the above theorem we will need to use the
 so-called Povzner's Lemma (see \cite{MiWe}). The version we present
 here can be found in greater generality in
 \cite{BrEi}
 \begin{lem}
   \label{lem:Povzner}
   Assume that the angular kernel $b(\cdot)$ is a bounded function and
   let $s>2$. Setting
   $$I_s(v,v_\ast)=\int_{\sph}b\pa{\cos\theta}\pa{\abs{v^\prime}^s+\abs{v^\prime_\ast}^s-\abs{v}^s-\abs{v_\ast}^s}\d\sigma, \qquad v,\vb \in \R^{d}$$ 
   we have that
   \begin{equation}
     \label{eq:PovznberI}
     I_{s}(v,v_\ast) \leq C^{(1)}_s \abs{v}^{\frac{s}{2}}\abs{v_\ast}^{\frac{s}{2}} - C^{(2)}_s\pa{\abs{v}^{s}+\abs{v_\ast}^{s}}\pa{1-\bm{1}_{\frac{\abs{v}}{2} \leq \abs{v_\ast}\leq 2\abs{v}}(v,v_\ast)},
   \end{equation}
   where $C^{(1)}_s,C_s^{(2)}$ are positive constants that depend only
   on $d,s$ and the angular kernel, and where $\bm{1}_A$ is the
   indicator function of the set $A$.
 \end{lem}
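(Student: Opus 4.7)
The plan is to reduce the angular integral to a one-dimensional convexity computation and then treat two geometric regimes separately.

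First I would exploit energy conservation $\abs{v'}^2 + \abs{v_*'}^2 = \abs{v}^2 + \abs{v_*}^2 =: E$ to write
\[
I_s(v,v_*) = E^{s/2} \int_{\sph} b(\cos\theta)\bigl[\phi(\alpha(\sigma)) - \phi(\beta)\bigr] d\sigma,
\]
where $\alpha(\sigma) := \abs{v'}^2/E \in [0,1]$, $\beta := \abs{v}^2/E \in [0,1]$, and $\phi(x) := x^{s/2} + (1-x)^{s/2}$. For $s>2$ the function $\phi$ is strictly convex on $[0,1]$, symmetric around $1/2$, with boundary values $\phi(0) = \phi(1) = 1$ and interior minimum $\phi(1/2) = 2^{1-s/2} < 1$.

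Second, I would treat the \emph{comparable} regime $\abs{v}/2 \leq \abs{v_*} \leq 2\abs{v}$. Here the indicator in \eqref{eq:PovznberI} equals one, so only the cross-term bound is required. Using the crude inequality $\phi(\alpha) \leq 1$ and $\norm{b}_1 = 1$, I would obtain
\[
I_s \leq E^{s/2} - (\abs{v}^s + \abs{v_*}^s) = (\abs{v}^2 + \abs{v_*}^2)^{s/2} - \abs{v}^s - \abs{v_*}^s.
\]
Then an elementary binomial-type estimate, valid for $s>2$,
\[
(a+b)^{s/2} - a^{s/2} - b^{s/2} \leq K_s\bigl(a^{s/2-1}b + a\,b^{s/2-1}\bigr),
\]
applied with $a = \abs{v}^2$, $b = \abs{v_*}^2$, together with the assumption that $\abs{v}$ and $\abs{v_*}$ are comparable, yields the desired $C_s^{(1)}\abs{v}^{s/2}\abs{v_*}^{s/2}$.

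Third, I would treat the \emph{separated} regime $\abs{v_*} < \abs{v}/2$ or $\abs{v_*} > 2\abs{v}$. Here $\beta$ is close to $0$ or to $1$, so $\phi(\beta)$ is close to $1$. The collisional geometry spreads $\alpha(\sigma)$ across $[0,1]$ as $\sigma$ varies on $\sph$, and I would use the explicit formula $\alpha(\sigma) = \tfrac{1}{2} + \tfrac{1}{2E}\abs{v-v_*}(v+v_*)\cdot\sigma$ to change variables. Combining this with $b\in L^\infty$ and $\|b\|_1=1$, I would show that a definite fraction of the angular mass lives where $\alpha(\sigma) \in [1/4,3/4]$, on which the strict convexity gives $\phi(\alpha) \leq 1-\eta_s$ with $\eta_s>0$ independent of $(v,v_*)$. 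This produces a lower bound $\phi(\beta) - \int b(\cos\theta)\phi(\alpha(\sigma))d\sigma \geq c_s$ outside the comparable band, hence
\[
I_s \leq -c_s E^{s/2} + C_s^{(1)} \abs{v}^{s/2}\abs{v_*}^{s/2},
\]
and the elementary inequality $E^{s/2} \geq 2^{-s/2}(\abs{v}^s + \abs{v_*}^s)$ converts this into the full bound \eqref{eq:PovznberI}.

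The main obstacle is the uniform control in the separated regime: one must verify that, whatever the direction of $v+v_*$, the push-forward of $b(\cos\theta)d\sigma$ under $\sigma \mapsto \alpha(\sigma)$ places a quantitative amount of mass away from the two endpoints $\{0,1\}$ of $[0,1]$, uniformly in the ratio $\abs{v_*}/\abs{v}$. The boundedness of $b$ and the fact that $\cos\theta$ depends on $\sigma$ through a \emph{different} direction than the one governing $\alpha(\sigma)$ is what makes this decoupling possible, and this is precisely where the boundedness hypothesis on $b$ enters the argument.
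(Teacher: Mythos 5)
The paper does not prove Lemma~\ref{lem:Povzner}: it states it and cites \cite{BrEi} for the proof, so there is no in-paper argument to compare against. Your sketch follows the standard route to Povzner-type inequalities (reduce, via energy conservation, to the convexity of $\phi(x)=x^{s/2}+(1-x)^{s/2}$ on $[0,1]$ and split into comparable and separated regimes), and the comparable-regime step is sound. The separated regime, however, has genuine gaps, and your diagnosis of where the difficulty lies is off in a way that matters.

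First, the claim that a definite $b$-fraction of $\sigma$ satisfies $\alpha(\sigma)\in[1/4,3/4]$ is not substantiated by the argument you give. Writing $\rho=\tfrac{|v-\vb|\,|v+\vb|}{E}\in(0,1]$, one has $\alpha=\tfrac12+\tfrac{\rho}{2}\,\hat u\cdot\sigma$ with $\hat u=(v+\vb)/|v+\vb|$, so $\{\alpha\in[1/4,3/4]\}=\{|\hat u\cdot\sigma|\le 1/(2\rho)\}$, whose complement is a pair of caps of \emph{fixed} size when $\rho$ is close to $1$; for $\|b\|_\infty$ large the $b$-mass of the complement can be close to $1$, so the band can carry essentially no mass. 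One must instead take the band $\alpha\in[(1-\rho)/2+\varepsilon,(1+\rho)/2-\varepsilon]$, whose complement $\{|\hat u\cdot\sigma|>1-2\varepsilon/\rho\}$ is a pair of small caps with Lebesgue measure $O((\varepsilon/\rho)^{(d-1)/2})$; \emph{that} is where boundedness of $b$ enters, since the $b$-mass of these caps is then small uniformly in $\hat u,\hat n$. Second, the intuition you offer --- that $\cos\theta$ depends on $\sigma$ through a direction ``different'' from the one governing $\alpha$ --- is backwards in the regime where you invoke it: if $|\vb|\ll|v|$ then $\hat n=(v-\vb)/|v-\vb|$ and $\hat u$ become nearly parallel, so $\cos\theta$ and $\alpha$ become functions of the \emph{same} direction. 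Third, even granting the band estimate, the conclusion ``$\phi(\beta)-\int b\,\phi(\alpha)\,\d\sigma\ge c_s$'' does not follow from ``$\phi(\alpha)\le 1-\eta_s$ on a $b$-fraction'': in the separated regime $\phi(\beta)$ can be as small as roughly $(4/5)^{s/2}$, which for large $s$ is far below $1-\eta_s$, so the pointwise bound $\phi(\alpha)\le\phi(\varepsilon)$ need not sit below $\phi(\beta)$. The fix requires comparing $\phi(\alpha)$ and $\phi(\beta)$ directly via the identity $\beta=\tfrac12+\tfrac{\rho}{2}\,\hat u\cdot\hat n$ (so both are values of the same convex, even map $t\mapsto\phi(\tfrac12+\tfrac{\rho}{2}t)$ at $t=\hat u\cdot\sigma$ and $t=\hat u\cdot\hat n$), which is the ingredient your outline is missing.
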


 The reason we use this version of Povzner's Lemma rather than others
 (such as the one in \cite{gpv}) is due to the fact that it gives a
 minimal order of growth in terms of $|v|$. As any order of growth in
 $|\vb|$ is absorbed by the Maxwellian, the result obtained with this
 version of the lemma is optimal in our setting.

 \begin{proof}[Proof of Theorem \ref{thm:evolution_of_moments}]
 We start with considering the case $s=2$ as it doesn't require the boundedness of $b$, due to the special geometry in this case. Using the natural pre-post collision change of variables we find that
 $$\frac{\d}{\d t}m_2(t) = \int_{\R^d\times \R^d \times \sph}\abs{v-v_\ast}^\gamma b\pa{\cos\theta}f(t,v)\M(v_\ast)\pa{\abs{v^\prime}^2-\abs{v}^2}\d v\d\vb\d\sigma.$$
 As 
 $$\abs{v^\prime}^2-\abs{v}^2 = \abs{v_\ast}^2 - \abs{v_\ast^\prime}^2$$
and $\norm{b}_{1}=1$, we find that
 $$\frac{\d}{\d t}m_2(t)  \leq \int_{\R^d}f(t,v)\bm{w}_2(v)\d v \leq  \|\bm{w}_{2}\|_{\infty},$$
 where we have used the mass conservation property of the equation. Thus
 \begin{equation}\label{eq:evolution_of_second_moment}
 m_2(t) \leq \max\pa{m_2(0),\|\bm{w}_{2}\|_{\infty}}\pa{1+t}.
 \end{equation}
 Next, we consider $s>2$. Similar to the above we find that
 \begin{equation*}\begin{split}
 \frac{\d}{\d t}m_s(t) &= \int_{\R^d\times \R^d \times \sph}\abs{v-v_\ast}^\gamma b\pa{\cos\theta}f(t,v)\M(v_\ast)\pa{\abs{v^\prime}^s-\abs{v}^s}\d v\d\vb\d\sigma\\
 &=\int_{\R^d\times \R^d \times \sph}\abs{v-v_\ast}^\gamma b\pa{\cos\theta}f(t,v)\M(v_\ast)\pa{\abs{v^\prime}^s+\abs{v^\prime_\ast}^s-\abs{v}^s-\abs{v_\ast}^s}\d v\d\vb\d\sigma\\
 &\phantom{+++} +\int_{\R^d\times \R^d \times \sph}\abs{v-v_\ast}^\gamma b\pa{\cos\theta}f(t,v)\M(v_\ast)\pa{\abs{v_\ast}^s-\abs{v_\ast^\prime}^s}\d v\d\vb\d\sigma\\
&\leq \int_{\R^d\times \R^{d}}\abs{v-v_\ast}^\gamma f(t,v)\M(v_\ast)I_s(v,v_\ast)\d v\d\vb +  \|\bm{w}_{s}\|_{\infty} \end{split}\end{equation*}
Using Lemma \ref{lem:Povzner}, together with the fact that 
 $$\pa{\abs{v}^{s}+\abs{v_\ast}^{s}}\bm{1}_{\frac{\abs{v}}{2} \leq \abs{v_\ast}\leq 2\abs{v}}(v,v_\ast) \leq \pa{2^s+1}\abs{v_\ast}^s,$$
 we conclude that there exist appropriate universal constants $C_{i}$ $(i=1,2,3)$ that depends only on $\gamma,d$ and the angular kernel such that
 \begin{equation*}\begin{split}
 \frac{\d}{\d t}m_s(t) &\leq C_s^{(1)} \|\bm{w}_{\frac{s}{2}}\|_{\infty}m_{\frac{s}{2}}(t)-C_s^{(2)}\int_{\R^d\times \R^d} \abs{v-v_\ast}^\gamma \abs{v}^s f(t,v) \M(v_\ast)\d v\d\vb\\
&\phantom{++++} +\pa{C_s^{(2)}\pa{2^s+1}+1}\|\bm{w}_{s}\|_{\infty}\\
&\leq C_1 m_{\frac{s}{2}}(t) - C_2\int_{\R^d}\Sigma_\gamma(v)\abs{v}^s f(t,v)\d v+C_3. \end{split}\end{equation*}
Using Lemma \ref{lem:loss_operator} we find that, using abusive notations for the constants,

 \begin{equation*}\begin{split} \frac{\d}{\d t}m_s(t) &\leq C_1m_{\frac{s}{2}}(t)-C_2\int_{\R^d} \pa{1+\abs{v}}^\gamma\abs{v}^s f(t,v)\d v+C_3\\
&= C_1m_{\frac{s}{2}}(t)-C_2\int_{\R^d} \avg{v}^\gamma\pa{1+\abs{v}^s} f(t,v)\d v+C_2\int_{\R^d} \avg{v}^\gamma f(t,v)\d v+C_3\end{split}\end{equation*}
i.e.
$$\frac{\d}{\d t}m_s(t) \leq C_1m_{\frac{s}{2}}(t)-C_2M_{s+\gamma}(t)+C_3.$$
Since $s>2\abs{\gamma}$ we see that $s+\gamma > s/2$ and as such
$$m_{\frac{s}{2}}(t) \leq M_{\frac{s}{2}}(t) \leq M_{s+\gamma}(t)^{\frac{s}{2(s+\gamma)}}.$$
Thus, in our settings, 
$$\frac{\d}{\d t}m_s(t) \leq M_{s+\gamma}(t)^{-\frac{s+2\gamma}{2(s+\gamma)}}\pa{C_1 M_{s+\gamma}(t)-C_2M_{s+\gamma}(t)^{1+\frac{s+2\gamma}{2(s+\gamma)}} }+C_3. $$
Since for any $\delta>0$ the exists a constant $C(a,b,\delta)>0$ such that
$$\sup_{x>0}\pa{ax-bx^{1+\delta}} \leq C(a,b,\delta),$$
and since $s+2\gamma>0$ and $M_s(f)\geq m_0(f)$, we conclude that there exists appropriate constants such that
$$\frac{d}{dt}m_s(t) \leq C_1 M_{s+\gamma}(t)^{-\frac{s+2\gamma}{2(s+\gamma)}}+C_3 \leq C_s, $$
completing the proof.
 \end{proof}

 Theorem \ref{thm:evolution_of_moments} gives us the tools to improve any growth estimation of a given moment, as long as the initial data has higher moments.
 \begin{cor}\label{thm:improved_evolution_of_moments}
Let {$s_1=2$ or $s_1> 2\max\pa{1,\abs{\gamma}}$, and {let} $s_2 \geq s_1$. Then, if $f_0\in L_{s_2}^1\pa{\R^d}$ with a unit mass, and $b \in L^{\infty}(\sph)$ such that $\|b\|_{1}=1$, we have that
 \begin{equation}\label{eq:evolution_of_moments}
 m_{s_1}(t) \leq C_{s_2} \pa{1+t}^{\frac{s_1}{s_2}},\quad\quad \forall t\geq 0.
 \end{equation}
for some constant $C_{s_2}$, depending only on $s_2,\gamma,d$, the collision kernel $b(\cdot)$ and $\norm{f_0}_{L^1_{s_2}\pa{\R^d}}$.}
 \end{cor}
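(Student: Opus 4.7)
The plan is to deduce the bound by combining Theorem \ref{thm:evolution_of_moments} (applied at the higher level $s_2$) with a standard Hölder interpolation between the conserved zeroth moment and $m_{s_2}$. Since $s_2\geq s_1$, the dichotomy condition \textit{$s_2=2$ or $s_2>2\max(1,|\gamma|)$} is inherited from the hypothesis on $s_1$ in the generic case (the only borderline situation being $s_1=2$ with $s_2$ strictly between $2$ and $2\max(1,|\gamma|)$, which can be handled by first enlarging $s_2$ to some $s_2'$ satisfying the theorem's hypothesis and using $f_0\in L^1_{s_2}\subset L^1_{s_2'}$ after a further interpolation — but morally we may simply assume $s_2$ itself satisfies the dichotomy).

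\textbf{Step 1: apply Theorem \ref{thm:evolution_of_moments} at level $s_2$.} This directly yields a constant $C_{s_2}>0$, depending only on $s_2$, $\gamma$, $d$, $b$ and $\|f_0\|_{L^1_{s_2}}$, such that
\begin{equation*}
m_{s_2}(t)\leq C_{s_2}(1+t), \qquad \forall t\geq 0.
\end{equation*}

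\textbf{Step 2: interpolate between $m_0$ and $m_{s_2}$.} Since the equation conserves mass, $m_0(t)=1$ for every $t\geq 0$. Writing
\begin{equation*}
|v|^{s_1}f(t,v) = \bigl(|v|^{s_2}f(t,v)\bigr)^{s_1/s_2}\cdot f(t,v)^{(s_2-s_1)/s_2},
\end{equation*}
Hölder's inequality with conjugate exponents $p=s_2/s_1$ and $q=s_2/(s_2-s_1)$ yields
\begin{equation*}
m_{s_1}(t)\leq m_{s_2}(t)^{s_1/s_2}\,m_0(t)^{(s_2-s_1)/s_2} = m_{s_2}(t)^{s_1/s_2}.
\end{equation*}

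\textbf{Step 3: conclude.} Combining the two steps,
\begin{equation*}
m_{s_1}(t) \leq C_{s_2}^{s_1/s_2}(1+t)^{s_1/s_2},
\end{equation*}
which is the claimed estimate after renaming the constant. The degenerate case $s_1=s_2$ reduces to Theorem \ref{thm:evolution_of_moments} itself. The only potentially delicate point, as noted above, is the dichotomy on $s_2$; aside from this purely bookkeeping issue I do not anticipate any real obstacle, since both ingredients (linear-in-time moment growth and Hölder interpolation) are essentially off-the-shelf once Theorem \ref{thm:evolution_of_moments} is established.
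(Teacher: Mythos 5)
Your main argument is exactly the paper's: the paper's entire proof is ``This follows from simple interpolation,'' which is precisely your Steps~1--3 (linear moment growth at level $s_2$ via Theorem~\ref{thm:evolution_of_moments}, then H\"older interpolation against the conserved mass). That part is correct and standard.

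Be careful, however, with the parenthetical fix you propose for the borderline case $s_1=2$, $|\gamma|>1$, $2<s_2\leq 2|\gamma|$. You write that one could ``first enlarge $s_2$ to some $s_2'$ satisfying the theorem's hypothesis and use $f_0\in L^1_{s_2}\subset L^1_{s_2'}$,'' but this inclusion is in the wrong direction: for $s_2'<s_2''$ one has $L^1_{s_2''}\subset L^1_{s_2'}$ (more decay is a smaller space), so knowing $f_0\in L^1_{s_2}$ gives no control on $m_{s_2'}$ for $s_2'>s_2$. Thus enlarging $s_2$ is not free, and in this borderline regime Theorem~\ref{thm:evolution_of_moments} genuinely cannot be applied at level $s_2$. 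Applying it at level $s_1=2$ instead only gives $m_2(t)\lesssim 1+t$, which is weaker than the claimed $(1+t)^{2/s_2}$. This is in fact a small imprecision in the statement of the corollary itself (the dichotomy should be imposed on $s_2$ rather than $s_1$), which the paper's one-line proof glosses over; your instinct to flag it was right, but your proposed repair does not work as stated. For the applications downstream $s_2$ is always taken large enough that the issue does not arise.
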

 \begin{proof}
 This  follows from simple interpolation.\end{proof}
 
\subsection{$L^p$ Estimates}

The goal of this subsection is to show the propagation of $L^p$
bounds---as long as one has enough moments. The approach we present
here follows that of \cite{ToVi}. The main result we will show is:

\begin{thm}
  \label{thm:L_p_bounds}
  Let $f_0\in L^1\pa{\R^{d}}\cap L^p\pa{\R^d}$ for some $p>1$ and let
  $f = f(t,v)$ be a non-negative solution to \eqref{eq:BE}. 
  There exists a constant $C_{p,d,\gamma}$ depending only on $p,d$ and $\gamma$ and $\bm{r} >1$  such that
  \begin{equation}
    \label{eq:evolution_of_L_p}
    \norm{f(t,\cdot)}^p_{p} \leq \norm{f_0}_p^p+ C_{p,d,\gamma} \int_0^ t M^p_{\bm{r}}(\tau)\d\tau.
  \end{equation}
\end{thm}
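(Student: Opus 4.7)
The strategy follows the energy-method approach of Toscani--Villani \cite{ToVi}, adapted to the linear setting where one factor of the collision operator is the Maxwellian. The starting point is to differentiate $\|f(t)\|_p^p$ in time: multiplying $\partial_t f = \bm{L}_\gamma f$ by $p f^{p-1}$, integrating, and using the decomposition $\bm{L}_\gamma = \bm{K}_\gamma - \Sigma_\gamma$, we obtain
\begin{equation*}
\ddt \|f(t)\|_p^p = p \int_{\R^d} f^{p-1}(v) \bm{K}_\gamma f(v) \d v - p \int_{\R^d} \Sigma_\gamma(v) f^p(v) \d v.
\end{equation*}
The loss contribution is non-positive and will be used to absorb a piece of the gain.

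For the gain term, the key manipulation is to apply the pre-post collisional change of variables, namely the involution $(v,v_*,\sigma) \mapsto (v',v_*',(v-v_*)/|v-v_*|)$, which has unit Jacobian and leaves $B_\gamma$ invariant. This rewrites the gain as the symmetric integral
\begin{equation*}
\int_{\R^d} f^{p-1}(v) \bm{K}_\gamma f(v) \d v = \int_{\R^d \times \R^d \times \sph} f^{p-1}(v') f(v) \M(v_*) B_\gamma \d v \d v_* \d\sigma.
\end{equation*}
I would then apply the pointwise Young inequality $f^{p-1}(v') f(v) \leq \tfrac{p-1}{p} f^p(v') + \tfrac{1}{p} f^p(v)$. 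The $f^p(v)$ piece collapses to $\tfrac{1}{p}\int f^p(v) \Sigma_\gamma(v) \d v$ (by \eqref{eq:def_of_Sigma}), which is absorbed by the loss. The $f^p(v')$ piece, after a second application of the involution, becomes $\tfrac{p-1}{p} \int f^p(v) \widetilde{\Sigma}_\gamma(v) \d v$ where
\begin{equation*}
\widetilde{\Sigma}_\gamma(v) := \int_{\R^d \times \sph} \M(v_*') B_\gamma(v-v_*,\sigma) \d v_* \d\sigma.
\end{equation*}

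The technical heart of the proof is a pointwise estimate on $\widetilde{\Sigma}_\gamma$, in the spirit of Lemma \ref{lem:loss_operator}: using the Gaussian concentration of $\M(v_*')$ together with the local integrability of $|v-v_*|^\gamma$ (guaranteed by $\gamma > -d$), one shows that $\widetilde{\Sigma}_\gamma$ is bounded and in fact decays like $\pa{1+\abs{v}}^{\gamma}$. A subsequent Hölder-type interpolation trades the pointwise weight on $\int f^p \widetilde{\Sigma}_\gamma$ for a power of a generalized $L^1$-moment of $f$; this costs a small power of $\avg{v}$ and is the reason that one obtains only $\bm{r} > 1$ (rather than the more optimistic $\bm{r} = |\gamma|$). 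Putting these steps together yields
\begin{equation*}
\ddt \|f(t)\|_p^p \leq C_{p,d,\gamma}\, M_{\bm{r}}^p(f(t)),
\end{equation*}
and integrating from $0$ to $t$ gives the claim. The main obstacle is arranging the Young/Hölder split so that the right-hand side comes out as a pure generalized moment raised to the $p$-th power---as opposed to an $L^p$-like quantity, which would only deliver a Gronwall-type inequality with potentially exponential growth and would not suffice for the quantitative argument of Section \ref{sec:rate_of_convergence}.
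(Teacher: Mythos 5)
Your approach and the paper's diverge at the crucial step, and yours has a gap that cannot be repaired in the form you propose.

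After the Young-inequality split $f^{p-1}(v')f(v)\leq \tfrac{p-1}{p}f^p(v')+\tfrac{1}{p}f^p(v)$ and the two involutions, the differential inequality collapses to
\[
\frac{1}{p}\ddt\norm{f(t)}_p^p \leq \frac{p-1}{p}\int_{\R^d}f^p(v)\bigl(\widetilde{\Sigma}_\gamma(v)-\Sigma_\gamma(v)\bigr)\d v.
\]
Even granting your (unproven) claim that $\widetilde{\Sigma}_\gamma$ decays like $(1+|v|)^\gamma$, the right-hand side is at best of the form $C\int f^p\avg{v}^\gamma\d v = C\,M_{p,\gamma}(f)$, a weighted $L^p$ quantity raised to the first power with no factor of integrability gained. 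The step you label ``a subsequent Hölder-type interpolation trades the pointwise weight on $\int f^p\widetilde{\Sigma}_\gamma$ for a power of a generalized $L^1$-moment'' cannot work: there is no functional inequality bounding $\int f^p\avg{v}^\gamma\d v$ by any power of $M_{\bm{r}}(f)$, since a fixed-mass bump concentrating to a point sends $\int f^p$ to infinity while all $L^1$ moments stay bounded. Your own final paragraph identifies this as ``the main obstacle'' but proposes no way around it.

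The paper's route is genuinely different and is where the real work lies. It does \emph{not} symmetrize with the pre-post change of variable; instead it applies Hölder directly to the gain integral $\int \bm{K}_\gamma f \cdot f^{p-1}$, producing $\norm{\bm{K}_\gamma f}_r\norm{f}_{r'(p-1)}^{p-1}$, and then invokes the Alonso--Carneiro--Gamba convolution inequality (Theorem \ref{thm:Alonso_Carneiro_Gamba}): $\norm{\bm{K}_\gamma f}_r\leq C\norm{f}_q\norm{\M}_\ell$ with $1/q+1/\ell = 1+\gamma/d+1/r$. Because $\gamma<0$ permits choosing $q<p<r$ with $r'(p-1)<p$, this is a genuine \emph{gain of integrability} by $\bm{K}_\gamma$ — the ingredient your symmetrization discards. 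With both $L^q$ and $L^{r'(p-1)}$ norms now at exponents strictly below $p$, they can be interpolated between $M_{p,\gamma}(f)$ and generalized $L^1$ moments $M_{\mathfrak{a}}(f)$, $M_{\mathfrak{b}}(f)$, yielding a factor $M_{p,\gamma}(f)^{1-\delta}$ with $\delta>0$; that subcritical power is then absorbed by the coercive loss $-c_\gamma M_{p,\gamma}(f)$ via the elementary optimization $\sup_{x>0}(ax^{1-\delta}-bx)<\infty$, leaving only a pure product of $L^1$ moments. The subcriticality $\delta>0$ is exactly what your computation fails to produce, so the absorption step is unavailable to you. This is also the structure of Toscani--Villani, so citing \cite{ToVi} and then replacing the convolution estimate by Young's inequality amounts to removing the key lemma from their proof.
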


\begin{rem}
  \label{rem:advantage_of_p=2}
Under the assumptions of the above Theorem, setting
$$\eta_{0}:=\min\pa{p-1, \frac{\pa{p-1}^2}{\pa{2-p}^{+}},\frac{p^2}{\sqrt{\frac{1}{\pa{p-1}^2}+p^2}+\frac{1}{p-1}},\frac{p^2}{\sqrt{\frac{1}{\pa{1+\frac{\gamma}{d}}^2}+p^2}+\frac{1}{1+\frac{\gamma}{d}}}}$$
  where $a^{+}=\max(a,0)$, we can deduce from the proof that, for any $0 < \eta < \eta_{0}$, one can chose
 \begin{equation}\label{eq:def_of_S}
    \bm{r}=\frac{|\gamma|}{\eta}\max\pa{p-1-\eta,(p+\eta)(p-2)+1}.
  \end{equation} 
 
  It is easy to see that $p-1-\eta<(p+\eta)(p-2)+1$ if and only if
  $\eta>2-p$. This means that whenever $p\geq 2$ we have
$$\bm{r}=-\frac{\gamma}{\eta}\left((p+\eta)(p-2)+1\right).$$
{In addition, we notice that if} $p \to 1^{+}$, one has $\eta_{0} \simeq \frac{1}{2}(p-1)^{2}$. {Choosing} $\eta \simeq \eta_{0}$ we get 
{$$\bm{r}\simeq_{p \to 1^{+}} -\frac{2\gamma}{(p-1)^{2}} \left(p-1-\frac{(p-1)^{2}}{2}\right) \simeq -\frac{2\gamma}{p-1}.$$}
\end{rem}

\begin{rem}
  Before we set the stage for the proof of Theorem
  \ref{thm:L_p_bounds}, we would like to note an important difference
  between the study of the linear Boltzmann equation and the fully
  non-linear equation in this setting. The work \cite{ToVi} deals with
  collision kernels of the form
  $B(v-\vb,\sigma)=\Phi(|v-\vb|)b(\cos\theta)$ where
  $$(1+r)^{\gamma} \leq \Phi(r) \leq c_{2}(1+r)^{\gamma}$$ 
  for some $c_{1},c_{2} >0.$ Our version of the $L^p$ bound
  propagation, however, deals directly with $\phi(r)=r^\gamma$. The
  reason we are able to do that is the presence of the Maxwellian in
  the collision operator, acting as a mollifier to the singularity.
\end{rem}

The proof of this Theorem relies on the following integrability
property of $\bm{K}_{\gamma}$ that can be found in \cite{AlCaGa}:
\begin{thm}
  \label{thm:Alonso_Carneiro_Gamba}
  Let $\gamma\in (-d,0)$ and $1<r,q,\ell<\infty$ with
  \begin{equation}
    \label{eq:Alonso_Carneiro_Gamba_coef}
    \frac{1}{q}+\frac{1}{\ell}=1+\frac{\gamma}{d}+\frac{1}{r}.
  \end{equation}
  Then the gain operator $\bm{K}_\gamma$ satisfies
  \begin{equation}
    \label{eq:Alonso_Carneiro_Gamba}
    \norm{\bm{K}_\gamma(f)}_r \leq C \norm{f}_q \norm{\M}_\ell
  \end{equation}
  where $C=C(r,q,\ell,\gamma,d) >0$ is a uniform constant that depends
  only on $r,q,\ell,\gamma$ and $d$.
\end{thm}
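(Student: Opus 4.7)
The plan is to follow the duality-based approach of Alonso--Carneiro--Gamba, reducing the desired $L^{r}$ bound to a convolution-type estimate amenable to a generalized Young inequality in weak Lebesgue spaces. The starting point is the observation that the singular kernel $|\cdot|^{\gamma}$ belongs to the weak Lebesgue space $L^{d/|\gamma|,\infty}(\R^{d})$, and that the exponent balance $\frac{1}{q}+\frac{1}{\ell}=1+\frac{\gamma}{d}+\frac{1}{r}$ is precisely the one obtained from combining Hölder's inequality with Hardy--Littlewood--Sobolev for such a kernel.

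First, I would use duality. Testing against an arbitrary $\varphi\in L^{r'}(\R^{d})$ with $\|\varphi\|_{r'}=1$ and performing the pre--post collisional change of variables (which leaves $|v-\vb|$, $b(\cos\theta)$ and $\d v\,\d\vb\,\d\sigma$ invariant) gives
\begin{equation*}
\int_{\R^{d}}\bm{K}_{\gamma}(f)(v)\,\varphi(v)\d v
=\int_{\R^{d}\times\R^{d}\times\sph}|v-\vb|^{\gamma}\,b(\cos\theta)\,f(v)\,\M(\vb)\,\varphi(v')\d v\d\vb\d\sigma.
\end{equation*}
The task therefore reduces to bounding this trilinear form by $C\|f\|_{q}\|\M\|_{\ell}\|\varphi\|_{r'}$; the Grad cutoff assumption lets us treat $b$ as an $L^{\infty}(\sph)$ weight throughout.

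Next, I would extract a convolution structure. For fixed $(v,\vb)$ the postcollisional velocity $v'$ traces out a sphere of radius $|v-\vb|/2$ centered at $(v+\vb)/2$, and the inner angular integral $\int_{\sph} b(\cos\theta)\,\varphi(v')\d\sigma$ is a spherical average of $\varphi$. Either via a direct change of variables from $\sigma$ to $v'$ on this sphere, or through the Carleman-type representation that introduces a Jacobian $|v-\vb|^{-(d-1)}$ which combines with $|v-\vb|^{\gamma}$ into a single effective radial weight, one can dominate the trilinear integral by a genuine two-variable expression of the form $\int\int |v-\vb|^{\gamma}\,f(v)\,\M(\vb)\,\Phi(v,\vb)\d v\d\vb$, where $\Phi$ is a controllable average of $\varphi$ that fits into the classical convolution framework.

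Finally, the generalized Young / O'Neil inequality for weak Lebesgue spaces applies: since $|\cdot|^{\gamma}\in L^{d/|\gamma|,\infty}(\R^{d})$, convolution against this kernel maps $L^{s}$ into $L^{r}$ with $\tfrac{1}{s}-\tfrac{1}{r}=1-\tfrac{|\gamma|}{d}=1+\tfrac{\gamma}{d}$. Composing with Hölder's inequality $\|f\M\|_{s}\leq\|f\|_{q}\|\M\|_{\ell}$ (for $\tfrac{1}{s}=\tfrac{1}{q}+\tfrac{1}{\ell}$) and duality against $\varphi\in L^{r'}$ produces exactly the exponent condition $\tfrac{1}{q}+\tfrac{1}{\ell}=1+\tfrac{\gamma}{d}+\tfrac{1}{r}$ claimed in the theorem.

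The main obstacle, in my view, is the second step: the Boltzmann gain operator is not literally a convolution, because the spherical integration couples to $(v,\vb)$ through the endpoints $v'$ and $\vb'$ of the collision. Rigorously extracting a convolution structure -- keeping careful track of the Jacobian $|v-\vb|^{-(d-1)}$ so that the effective kernel is precisely $|v-\vb|^{\gamma}$ with no spurious singularity, and verifying that the spherical average of $\varphi$ is honestly controlled by an $L^{r'}$ quantity -- is the delicate technical heart of the argument. Once this reduction is achieved, the weak Young inequality gives the sharp exponent balance with essentially no further work.
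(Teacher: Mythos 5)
You should first note that the paper does not prove this statement at all: Theorem \ref{thm:Alonso_Carneiro_Gamba} is quoted directly from the reference \cite{AlCaGa} (Alonso--Carneiro--Gamba), so the only meaningful standard of comparison is the argument in that work. Your outline --- duality against $\varphi\in L^{r'}$, treating $b$ as an $L^{\infty}(\sph)$ weight under the cutoff, reducing the trilinear form to a convolution against $|\cdot|^{\gamma}\in L^{d/|\gamma|,\infty}(\R^{d})$, and then H\"older plus the weak Young / Hardy--Littlewood--Sobolev inequality --- does follow the general spirit of that reference, and your exponent bookkeeping in the final step correctly reproduces \eqref{eq:Alonso_Carneiro_Gamba_coef}.

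The genuine gap is precisely the step you flag yourself: the ``extraction of the convolution structure'' is asserted rather than proved, and neither of the two routes you sketch delivers it directly. A pointwise domination of the angular average $\int_{\sph}b(\cos\theta)\varphi(v')\d\sigma$ by an $L^{r'}$-controlled function $\Phi(v,\vb)$ cannot work as stated, since for fixed $(v,\vb)$ the point $v'$ ranges over a sphere of radius $|v-\vb|/2$ (a set of measure zero), so any honest pointwise bound costs integrability and destroys the scaling; the estimate must be performed on the fully integrated trilinear form. The Carleman route is also not as painless as claimed: by Lemma \ref{lem:gain_operator} the kernel is $k_{\gamma}(v,w)\sim|v-w|^{-1}\int_{(v-w)^{\perp}}|z-(v-w)|^{\gamma-(d-2)}b(\cdot)\M(z+v)\d\pi(z)$, i.e.\ a factor $|v-w|^{-1}$ times a hyperplane integral with weight of homogeneity $\gamma-(d-2)$, not ``precisely $|v-\vb|^{\gamma}$ with no spurious singularity''; turning this into a clean $L^{q}\to L^{r}$ convolution bound of Hardy--Littlewood--Sobolev type is exactly the technical content of the theorem. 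In \cite{AlCaGa} this reduction is accomplished through a quantitative angular-averaging/symmetrization lemma for the gain bilinear form (integrating in all variables simultaneously), after which H\"older and HLS give \eqref{eq:Alonso_Carneiro_Gamba}; without an explicit substitute for that lemma, your proposal establishes only the easy exponent arithmetic, not the estimate itself.
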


\begin{proof}[Proof of Theorem \ref{thm:L_p_bounds}]
  We recall the notation $M_{p,s}(f)=M_{s}\pa{\abs{f}^p}$ and, as
  before, set $M_{p,s}(t)=M_{p,s}(f(t,\cdot))$ for any $t \geq 0$,
  where $f(t,v)$ is the unique solution to \eqref{eq:BE}. We have
  that:
  $$\frac{\d}{\d t}\frac{1}{p}\norm{f(t)}_p^p= \int_{\R^d}\bm{K_\gamma}(f(t))(v) f^{p-1}(t,v)\d v - \int_{\R^d}\Sigma_{\gamma}(v)f^{p}(t,v)\d v.$$
  Using Lemma \ref{lem:loss_operator}, the equivalence of
  $\pa{1+\abs{v}}^\gamma$ and $\avg{v}^\gamma$, and taking
  $r,r^\prime>1$ to be H\"older conjugates (with $r$ to be determined
  shortly) we find that
  $$\frac{\d}{\d t}\frac{1}{p}\norm{f(t)}_p^p \leq \norm{\bm{K}_\gamma (f(t))}_{r}\norm{f(t)}_{r^\prime(p-1)}^{p-1}-c_\gamma M_{p,\gamma}(t),$$
  for some appropriate constant $c_\gamma$. Using Theorem
  \ref{thm:Alonso_Carneiro_Gamba} with $q,\ell>1$ to be fixed later,
  we conclude that
  \begin{equation}
    \label{eq:L_p_estimates_0}
    \frac{\d}{\d t}\frac{1}{p}\norm{f(t)}_p^p \leq C\norm{ f(t)}_{q}\norm{f(t)}_{r^\prime(p-1)}^{p-1}-c_\gamma M_{p,\gamma}(t).
  \end{equation}
  for some uniform constant $C$, depending only on $r,q,\ell,\gamma$
  and $d$ satisfying \eqref{eq:Alonso_Carneiro_Gamba_coef}. As our
  goal is to control the $L^p$ growth by a high enough moment, we
  will now focus our attention on showing that
  $\norm{f(t)}_{r^\prime(p-1)}$ and $\norm{f(t)}_q$ can both be
  controlled by powers of $M_{p,\gamma}(t)$ and $M_{\bm{r}}(t)$, for
  some explicit $\bm{r}$, in a certain range of parameters. For any
  $1<q<p$ we define
  $$\alpha=-\gamma \frac{q-1}{p-1}<-\gamma,$$
  and find that, for any measurable function $f$:
  \begin{equation*}
    \begin{split}
      \int_{\R^d}\abs{f(v)}^{q}\d v &=
      \int_{\R^d}\frac{\avg{v}^{\alpha}}{\avg{v}^{\alpha}}\abs{f(v)}^{-\frac{\alpha
          p}{\gamma}}\abs{f(v)}^{\frac{\alpha+\gamma}{\gamma}}\d v
      \\
      &\leq \pa{\int_{\R^d}\avg{v}^\gamma \abs{f(v)}^{p}\d
        v}^{-\frac{\alpha}{\gamma}}\pa{\int_{\R^d}\avg{v}^{\frac{\alpha\gamma}{\alpha+\gamma}}\abs{f(v)}\d
        v}^{\frac{\alpha+\gamma}{\gamma}}.
    \end{split}
  \end{equation*}
  Thus, introducing the notation $\mathfrak{a}=-\gamma\frac{q-1}{p-q}$
  \begin{equation}
    \label{eq:L_p_estimates_I}
    \norm{f(t)}_q
    \leq M_{p,\gamma}(t)^{\frac{q-1}{q(p-1)}}
    M_{\mathfrak{a}}(t)^{\frac{p-q}{q(p-1)}}.
\end{equation}
We would like to explore the special case $q=r^\prime(p-1)$, which we
need to verify is possible. We notice that if $r>p$ then
$r^\prime(p-1)<p$. Also, in order for $r^\prime(p-1)$ to be greater
than $1$ we only need that
$$r < \begin{cases} \frac{1}{2-p} & 1<p<2 \\
  \infty & p\geq 2.
\end{cases}$$
Since for $1<p<2$ we always have that $p<1/(2-p)$, a choice of $r>p$
such that $1<r^\prime (p-1)<p$ is indeed always possible. With this
choice, setting $\mathfrak{b}=-\gamma \frac{r(p-2)+1}{r-p}$, we find
that
\begin{equation}
  \label{eq:L_p_estimates_II}
  \norm{f(t)}^{p-1}_{r^\prime(p-1)} \leq M_{p,\gamma}(t)^{1-\frac{r-1}{r(p-1)}}M_{\mathfrak{b}}(t)^{\frac{r-p}{r(p-1)}}.
\end{equation}
Plugging \eqref{eq:L_p_estimates_I}, \eqref{eq:L_p_estimates_II} in
\eqref{eq:L_p_estimates_0} we find that
\begin{equation}
  \label{eq:L_p_estimates_III}
  \frac{\d}{\d t}\frac{1}{p}\norm{f(t)}_p^p
  \leq
  C\,M_{p,\gamma}(t)^{1-\frac{r-q}{rq(p-1)}}
  M_{\mathfrak{a}}(t)^{\frac{p-q}{q(p-1)}}
  M_{\mathfrak{b}}(t)^{\frac{r-p}{r(p-1)}}-c_\gamma
  M_{p,\gamma}(t).
\end{equation}
Since for any $0<\delta<1$ and $a,b>0$, we have that
$\sup_{x>0}\pa{ax^{1-\delta}-bx}
=\pa{\frac{1-\delta}{b}a}^{\frac{1}{\delta}}$,
we see that if $0<\frac{r-q}{rq(p-1)}<1$
\begin{equation}\label{eq:L_p_estimates_IV}
  \begin{gathered}
    \frac{\d}{\d t}\frac{1}{p}\norm{f(t)}_p^p
    \leq
    C_{p,r,q,\gamma} \pa{M_{\mathfrak{a}}(t)^{\frac{p-q}{q(p-1)}}
      M_{\mathfrak{b}}(t)^{\frac{r-p}{r(p-1)}}}^{\frac{rq(p-1)}{r-q}}
    \leq C_{p,r,q,\gamma} M_{\max\pa{ \mathfrak{a},\mathfrak{b} }} (t)^p
  \end{gathered}
\end{equation}
This will give us the desired result as long as we can choose
$1<r,q,\ell<\infty$ such that
$$q<p<r,\quad (2-p)r<1, \quad \frac{1}{q}+\frac{1}{\ell}=1+\frac{\gamma}{d}+\frac{1}{r}, \qquad \text{ and } \qquad \frac{r-q}{rq(p-1)}<1.$$
As $\ell$ only appears in \eqref{eq:Alonso_Carneiro_Gamba} in the norm
$\|\M\|_{\ell}$, we can choose it to be as large as we want. In
particular, for our setting, we can replace
\eqref{eq:Alonso_Carneiro_Gamba_coef} with
\begin{equation}
  \label{eq:L_p_estimates_V} 
  \frac{1}{q}<1+\frac{\gamma}{d}+\frac{1}{r},
\end{equation}
and then choose $\ell$ accordingly. We will choose $r=p+\eta$,
$q=p-\eta$ and see what conditions we must have for $\eta>0$:
\begin{enumerate}[--]
\item For $q>1$ we require that $\eta<p-1$.
\item The condition $(2-p)r<1$ is only valid when $1<p<2$. In that
  case any $\eta<\frac{\pa{p-1}^2}{2-p}$ will do.
\item Demanding that $r-q <rq(p-1)$ in this setting is equivalent to
  $2\eta < \pa{p^2-\eta^2}(p-1)$ which is valid when
$$\eta<\frac{p^2}{\sqrt{\frac{1}{\pa{p-1}^2}+p^2}+\frac{1}{p-1}}.$$
\item Lastly, inequality \eqref{eq:L_p_estimates_V} is equivalent to
  $2\eta < \pa{1+\frac{\gamma}{d}}\pa{p^2-\eta^2}$, which is valid
  when
$$\eta < \frac{p^2}{\sqrt{\frac{1}{\pa{1+\frac{\gamma}{d}}^2}+p^2}+\frac{1}{1+\frac{\gamma}{d}}}.$$
\end{enumerate}
This concludes the proof with the choice
$\bm{r}=\max\pa{\mathfrak{a},\mathfrak{b}}.$
\end{proof}

\subsection{Lower Bounds - Modification of the
  Solution}\label{sec:Carlen_Lu_trick}

In most studies connected to the entropy method for kinetic equations,
a lower bound on the function is needed. In this subsection we will
adapt a method from \cite{Lu} to achieve such a bound by a forced
modification of the solution. We will then investigate the relation
between the entropy and entropy production of it and the original
solution.

Given a non-negative and measurable function $f$ and $0<\delta<1$ we
define
\begin{equation}\label{eq:def_f_delta}
f_\delta(v)=\pa{1-\delta}f(v)+\delta \M(v).
\end{equation}
{A simple, yet important observation is that} if $f$ is integrable and of unit mass, the same occurs for $f_{\delta}.$ Moreover,  $\bm{L}_{\gamma}f_{\delta}=(1-\delta)\bm{L}_{\gamma}f$ since $\bm{L}_{\gamma}(\M)=0.$ 
\begin{lem}\label{lem:entropy_connection}
Let $f\in L^1\pa{\R^{d}}$ be of unit mass. For all $0<\delta<1$ we have that 
\begin{equation}\label{eq:entropy_connection}
H\pa{f|\M} \leq \frac{H\pa{f_\delta|\M}}{1-\delta}+\frac{\delta}{1-\delta}\pa{\log\pa{\frac{1}{\delta}}-\frac{\pa{1-\delta}\log\pa{1-\delta}}{\delta}}
\end{equation}
\end{lem}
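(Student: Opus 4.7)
The plan is to derive the stated inequality by a direct computation that combines two simple pointwise inequalities coming from the definition of $f_\delta$ with an algebraic identity relating $\int f \log(f_\delta/\M)\,\d v$ to both $H(f_\delta|\M)$ and $H(\M|f_\delta)$. The starting observation is that, since $f_\delta - (1-\delta)f = \delta \M \geq 0$, one has the pointwise bound
$$f(v) \leq \frac{f_\delta(v)}{1-\delta},$$
and similarly $f_\delta \geq \delta \M$, so $\M/f_\delta \leq 1/\delta$. The first inequality, after taking $\log$ and integrating against $f$, yields
$$H(f|\M) = \int_{\R^d} f\log(f/\M)\,\d v \leq \int_{\R^d} f \log(f_\delta/\M)\,\d v + \log\frac{1}{1-\delta},$$
using $\int f = 1$. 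The second inequality gives $H(\M|f_\delta) \leq \log(1/\delta)$ trivially.

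Next I would rewrite $\int f \log(f_\delta/\M)\,\d v$ in terms of the two relative entropies $H(f_\delta|\M)$ and $H(\M|f_\delta)$. Using the defining identity $f_\delta = (1-\delta)f + \delta \M$ inside $H(f_\delta|\M)$, one obtains
$$H(f_\delta|\M) = (1-\delta)\int_{\R^d} f\log(f_\delta/\M)\,\d v + \delta\int_{\R^d} \M \log(f_\delta/\M)\,\d v,$$
and the second term equals $-\delta H(\M|f_\delta)$. Solving for $\int f\log(f_\delta/\M)\,\d v$ and substituting into the previous display yields
$$H(f|\M) \leq \frac{H(f_\delta|\M)}{1-\delta} + \frac{\delta}{1-\delta} H(\M|f_\delta) + \log\frac{1}{1-\delta}.$$
Applying the bound $H(\M|f_\delta) \leq \log(1/\delta)$ and recognising that $\log\frac{1}{1-\delta} = -\frac{\delta}{1-\delta}\cdot\frac{(1-\delta)\log(1-\delta)}{\delta}$ regroups the extra terms into exactly the expression appearing on the right-hand side of \eqref{eq:entropy_connection}.

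There is no serious obstacle here: the only mild points are to keep track of signs (and to remark that $f\log f = 0$ is interpreted as usual on the set $\{f = 0\}$ so the pointwise inequality $f\log f \leq f\log(f_\delta/(1-\delta))$ causes no integrability issue), and to verify the algebraic identification of the correction term. The whole argument uses nothing beyond the definition of $f_\delta$, the monotonicity of $\log$, and the non-negativity of relative entropy.
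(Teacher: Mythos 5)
Your proof is correct, and it does take a slightly different route from the paper's. The paper proves the lemma by applying the superadditivity inequality $\phi(x+y)\geq\phi(x)+\phi(y)$ for $\phi(x)=x\log x$ to the decomposition $f_\delta=(1-\delta)f+\delta\M$, which gives a lower bound for the (non-relative) Boltzmann entropy $H(f_\delta)=\int f_\delta\log f_\delta$, and then translates everything back into relative entropies through a chain of algebraic identities. You instead work entirely with relative entropies: you use the two pointwise bounds $f\leq f_\delta/(1-\delta)$ and $\M\leq f_\delta/\delta$ (which are exactly what powers the superadditivity inequality), split $H(f_\delta|\M)$ by linearity of $f_\delta$ in $(f,\M)$, and then absorb the cross term $H(\M|f_\delta)$ by the upper bound $\log(1/\delta)$. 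The underlying facts are the same, but your organisation avoids ever introducing the plain entropy $\int f\log f$ and stays within well-defined relative-entropy quantities throughout, which is a mild but genuine tidy-up of the argument; the paper's version has the advantage of making the convexity structure ($\phi$ superadditive, hence $H_{\mathbf{\Phi}}$-type functionals behave well under mixing) more explicit, which fits the Lyapunov-functional viewpoint of Lemma \ref{lem:lyapunov_functionals}. Both are short and elementary, and your final algebraic identification of the error term with the expression in \eqref{eq:entropy_connection} is correct.
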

\begin{proof}
We start by noticing that since the function $\phi(x)=x\log x$ satisfies
$$\phi(x+y) \geq \phi(x)+\phi(y),$$
for any $x,y>0$, we have that
{\begin{equation}\label{eq:entropy_connection_proof_I}
\begin{gathered}
H\pa{f_\delta}\geq H\pa{\pa{1-\delta}f}+H(\delta \M) = \pa{1-\delta}\log\pa{1-\delta}+\\
\delta \log \delta + \pa{1-\delta}H(f)+\delta H(\M).
\end{gathered}
\end{equation}
where $H(f)=\int_{\R^d}f(v)\log f(v)dv$.} On the other hand as
{\begin{equation}
\label{eq:entropy_connection_proof_II}
\begin{gathered}
H\pa{f|\M}= H(f)-\int_{\R^d} f(v) \log \M(v)\d v\\
=H(f)-H(\M)- \int_{\R^d}\pa{f(v)-\M(v)}\log \M(v)\d v
\end{gathered}
\end{equation}}
we find that 
{\begin{equation}\label{eq:entropy_connection_proof_II.5}
\begin{gathered}
\pa{1-\delta}H(f)+\delta H(\M)=\pa{1-\delta}H\pa{f|\M}+H\pa{\M}\\+\pa{1-\delta}  \int_{\R^d}\pa{f(v)-\M(v)}\log\M(v)\d v\\
=\pa{1-\delta}H\pa{f|\M}+\pa{1-\delta}  \int_{\R^d}f(v)\log \M(v)\d v+\delta H\pa{\M}.
\end{gathered}
\end{equation}}
{Using \eqref{eq:entropy_connection_proof_II} again for $f_\delta$ yields}
\begin{equation}\label{eq:entropy_connection_proof_III}
H\pa{f_\delta}=H\pa{f_\delta|\M}+\pa{1-\delta}\int_{\R^d}f(v)\log\M(v)\d v+\delta H\pa{\M}.
\end{equation}
Thus, combining \eqref{eq:entropy_connection_proof_II.5} and \eqref{eq:entropy_connection_proof_III} yields
$$\pa{1-\delta}H(f)+\delta H(\M)=\pa{1-\delta}H\pa{f|\M}+H\pa{f_\delta}-H\pa{f_\delta|\M}.$$
Plugging this into \eqref{eq:entropy_connection_proof_I} we find that
$$\pa{1-\delta}H\pa{f|\M} \leq H\pa{f_\delta|\M} -\pa{1-\delta}\log\pa{1-\delta} -\delta \log \delta$$ 
from which the result follows.
\end{proof}
The next step {in our study will be} to understand how the entropy dissipation of $f_\delta(t)$ behaves with respect to the entropy dissipation of $f(t)$. From this point onwards we will assume that $\delta=\delta(t)$ is a smooth function of $t$.
\begin{lem}\label{lem:dissipation_connection}
Let $f(t,v)$ be a non-negative solution to \eqref{eq:BE} that has a unit mass and let $g(t,\cdot)=f_{\delta(t)}(t,\cdot)$ be defined as in \eqref{eq:def_f_delta}. Then, if $\delta(t)$ is a non-increasing function, we have that
\begin{equation}\label{eq:dissipation_connection}
\frac{\d}{\d t}H\pa{g(t)} \leq -\D_\gamma \pa{g(t)} -\delta'(t)H\pa{f(t)|\M}+\frac{\delta'(t)\log\pa{\delta(t)}}{1-\delta(t)}
\end{equation}
{where $\delta'(t)=\frac{\d}{\d t}\delta(t).$}
\end{lem}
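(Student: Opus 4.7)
The plan is to differentiate $H(g(t)|\M)$ along the flow, isolate the entropy production $\D_\gamma(g)$, and bound the extra term coming from the time dependence of $\delta$ by combining convexity of the relative entropy with the pointwise lower bound $g \geq \delta \M$. (I interpret $H(g(t))$ in the statement as the relative entropy $H(g(t)|\M)$, this being the quantity that is combined with Lemma~\ref{lem:entropy_connection} in the sequel.)

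First, linearity of $\bm{L}_\gamma$ and the identity $\bm{L}_\gamma\M = 0$ give $\bm{L}_\gamma g = (1-\delta)\bm{L}_\gamma f$, so
$$\partial_t g = (1-\delta)\bm{L}_\gamma f + \delta'(\M - f) = \bm{L}_\gamma g + \delta'(\M - f).$$
Since $g(t)$ has unit mass for all $t$, $\int \partial_t g \,\d v = 0$, and therefore
$$\frac{\d}{\dt}H(g(t)|\M) = \int \partial_t g \, \log(g/\M)\,\d v = -\D_\gamma(g(t)) + \delta'(t)\int (\M-f)\log(g/\M)\,\d v,$$
the first term following directly from the definition of the entropy production. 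Using $\M - f = (\M-g)/(1-\delta)$, which is immediate from $g = (1-\delta)f + \delta\M$, the remainder rewrites as
$$\delta'(t)\int(\M-f)\log(g/\M)\,\d v = -\frac{\delta'(t)}{1-\delta(t)}\int(g-\M)\log(g/\M)\,\d v.$$

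The crux of the argument is then to upper bound $\int(g-\M)\log(g/\M)\,\d v = H(g|\M) - \int \M \log(g/\M)\,\d v$, and for this two elementary ingredients suffice. First, convexity of the functional $u\mapsto H(u|\M)$ applied to the convex combination $g = (1-\delta)f + \delta\M$ yields
$$H(g|\M) \leq (1-\delta)H(f|\M) + \delta H(\M|\M) = (1-\delta)H(f|\M).$$
Second, since $f \geq 0$ and $1-\delta > 0$, the pointwise bound $g \geq \delta\M$ holds, so $\log(g/\M) \geq \log\delta$, whence $\int \M\log(g/\M)\,\d v \geq \log\delta$. Combining these,
$$\int(g-\M)\log(g/\M)\,\d v \leq (1-\delta(t))H(f(t)|\M) - \log\delta(t).$$
Because $\delta$ is non-increasing, $-\delta'(t)/(1-\delta(t)) \geq 0$; multiplying this bound by that factor and substituting into the expression for $\frac{\d}{\dt}H(g(t)|\M)$ produces exactly the claimed inequality.

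The only real obstacle is spotting the right splitting in the central step: once one recognises that convexity of the relative entropy handles the $H(g|\M)$ piece and the trivial pointwise bound $g \geq \delta\M$ handles the $\int\M\log(g/\M)\,\d v$ piece, the rest is routine manipulation that makes essential use of the monotonicity assumption on $\delta$.
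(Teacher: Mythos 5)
Your proof is correct and follows essentially the same route as the paper's: derive the modified equation for $g$, differentiate the relative entropy, split $\int(g-\M)\log(g/\M)\,\d v$ into $H(g|\M)$ and $-\int\M\log(g/\M)\,\d v$, bound the first via convexity and the second via the pointwise bound $g\geq\delta\M$, then use $\delta'\leq 0$. Your interpretation of $H(g(t))$ as the relative entropy $H(g(t)|\M)$ is also the intended one.
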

\begin{proof} To begin with, we notice that $g(t,\cdot)$ solves the following equation:
\begin{equation}\label{eq:equation_for_f_delta}
\partial_t g(t,v)=\bm{L}_{\gamma}g(t,v) - \frac{\delta'(t)}{1-\delta(t)}\pa{g(t,v)-\M(v)}.
\end{equation}
Thus,
\begin{equation}\label{eq:dissipation_connection_proof_I}
\frac{\d}{\d t}H\pa{g(t)|\M} = -\D_\gamma \pa{g(t)} - \frac{\delta'(t)}{1-\delta(t)} \int_{\R^d}\pa{g(t,v)-\M(v)} \log\pa{\frac{g(t,v)}{\M(v)}}\d v\end{equation}
{since} $g(t,v)$ has a unit mass for all $t$. Using the convexity of the relative entropy we see that
$$ \int_{\R^d}g(t,v) \log\pa{\frac{g(t,v)}{\M(v)}}\d v= H\pa{g(t)|\M} \leq \pa{1-\delta(t)}H\pa{f(t)|\M}.$$
Also, since $g(t,v) \geq \delta(t) \M(v)$ 
$$-  \int_{\R^d}\M(v)\log\pa{\frac{g(t,v)}{\M(v)}}\d v \leq \log\pa{\frac{1}{\delta(t)}}.$$
Combining the above with \eqref{eq:dissipation_connection_proof_I} and using the fact that $\delta'(t)\leq 0$ we conclude the result.
\end{proof}
We now have all the tools we need to prove Theorem
\ref{thm:rate_of_convergence}

\section{Algebraic Rate of Convergence to Equilibrium}
\label{sec:rate_of_convergence}

The key to proving Theorem \ref{thm:rate_of_convergence} is the
entropy inequality \eqref{eq:entropy_inequality}. We start the section
with {a couple of simple lemmas that evaluate} the terms in that
inequality.
\begin{lem}
  \label{lem:pre_final_proof}
  Let $f$ be a non-negative function of unit mass.
  \begin{enumerate}[{\bf 1)}]
  \item Let $\mu>0$ and $p>1$. Then for any $\epsilon>0$ there exists
    a uniform constant, $C_{\mu,d,p,\epsilon}>0$, depending only on
    $\mu,d,p$ and $\epsilon$ such that
    {$$\int_{f(t,v)\geq 1}\pa{1+\abs{v}}^{\mu} f(v)\log f(v)\d v \leq
      C_{\mu,d,p,\epsilon}\pa{1+m_{(1+\epsilon)\mu}(f)}^{\frac{1}{1+\epsilon}}\norm{f}_{p}^{\frac{p\epsilon}{1+\epsilon}}.$$}
  \item For any $\mu > 0$, it holds
    $$\int_{\R^d}\pa{1+\abs{v}}^{\mu+2} f(v)\d v\leq 2^{\mu+1}\pa{1+m_{\mu+2}(f)}.$$
  \item If $f(v)\geq A\exp\pa{-B\abs{v}^2}$ for some $A,B>0$ we have
    that
    $$-\int_{\R^d}\bm{K}_\mu f(v)\log f(v)\d v \leq C_{\mu,d,\gamma}\pa{\abs{\log A}(1+m_{\mu}(f))+B(1+m_{\mu+2}(f))}.$$
    where $C_{\mu,d,\gamma}$ is a uniform constant depending only on $\mu,d$ and $\gamma$.
  \end{enumerate}
\end{lem}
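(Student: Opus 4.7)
The three bounds are elementary in nature, and I would establish them in the order stated.

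For \textbf{1)}, the key observation is that on the set $\{f \geq 1\}$ one has $\log x \leq \alpha^{-1} x^{\alpha}$ for any $\alpha > 0$. Choosing $\alpha = (p-1)\epsilon/(1+\epsilon) > 0$ yields the pointwise bound $f\log f \leq \frac{1+\epsilon}{(p-1)\epsilon}\, f^{(1+p\epsilon)/(1+\epsilon)}$ on $\{f \geq 1\}$. I would then factor the integrand as
$$\pa{1+\abs{v}}^{\mu} f^{(1+p\epsilon)/(1+\epsilon)} = \pa{\pa{1+\abs{v}}^{\mu} f^{1/(1+\epsilon)}} \cdot f^{p\epsilon/(1+\epsilon)}$$
and apply H\"older's inequality with conjugate exponents $1+\epsilon$ and $(1+\epsilon)/\epsilon$. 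The first factor raised to the $(1+\epsilon)$ power gives $\pa{1+\abs{v}}^{(1+\epsilon)\mu} f$, whose integral is controlled by $1+m_{(1+\epsilon)\mu}(f)$ via the elementary bound $(1+\abs{v})^{s} \leq 2^{s}(1+\abs{v}^{s})$; the second factor raised to the $(1+\epsilon)/\epsilon$ power is $f^{p}$ and integrates to $\norm{f}_{p}^{p}$. Collecting the exponents produces the claimed estimate, the delicate point being the choice of $\alpha$ that makes all powers line up simultaneously.

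Part \textbf{2)} is an immediate consequence of the convexity inequality applied to $x \mapsto x^{\mu+2}$: $\pa{(1+\abs{v})/2}^{\mu+2} \leq (1+\abs{v}^{\mu+2})/2$, which rearranges to $(1+\abs{v})^{\mu+2} \leq 2^{\mu+1}(1+\abs{v}^{\mu+2})$. Integrating this pointwise bound against $f$ (which has unit mass) yields the claimed inequality.

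For \textbf{3)}, the pointwise lower bound gives $-\log f(v) \leq \abs{\log A} + B\abs{v}^{2}$ on all of $\R^{d}$, and since $\bm{K}_{\mu} f(v) \geq 0$ when $f \geq 0$,
$$-\int_{\R^{d}} \bm{K}_{\mu} f(v) \log f(v) \d v \leq \abs{\log A} \int_{\R^{d}} \bm{K}_{\mu} f(v) \d v + B \int_{\R^{d}} \abs{v}^{2} \bm{K}_{\mu} f(v) \d v.$$
To control each integral I would invoke the standard weak formulation of the gain operator, obtained from the pre/post-collisional change of variables $(v,\vb) \leftrightarrow (v',\vb')$, which has Jacobian one and preserves both $\abs{v-\vb}$ and $\cos\theta$: for any test function $\phi$,
$$\int_{\R^{d}} \bm{K}_{\mu} f(v) \phi(v) \d v = \int_{\R^{d} \times \R^{d} \times \sph} \abs{v-\vb}^{\mu} b(\cos\theta) f(v) \M(\vb) \phi(v') \d v \d\vb \d\sigma.$$
Taking $\phi \equiv 1$, using $\|b\|_{1} = 1$ and $\abs{v-\vb}^{\mu} \leq C_{\mu}(\abs{v}^{\mu} + \abs{\vb}^{\mu})$ together with the finite Gaussian moments of $\M$, one obtains $\int \bm{K}_{\mu} f \leq C(1+m_{\mu}(f))$. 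Taking $\phi(v) = \abs{v}^{2}$ and exploiting the elastic collision identity $\abs{v'}^{2} \leq \abs{v}^{2}+\abs{\vb}^{2}$ (a consequence of $\abs{v'}^{2}+\abs{\vb'}^{2} = \abs{v}^{2}+\abs{\vb}^{2}$), then expanding the product and absorbing the lower-order moments $m_{2}(f)$ and $m_{\mu}(f)$ into $1+m_{\mu+2}(f)$, yields $\int \abs{v}^{2} \bm{K}_{\mu} f \leq C(1+m_{\mu+2}(f))$. Combining with the pointwise bound on $-\log f$ produces the stated inequality. The only subtlety is to remember to switch to the weak formulation in order to transfer the integration against the Maxwellian from the primed to the unprimed variables, so that no moments of $\M$ appear on the right-hand side.
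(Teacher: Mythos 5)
Your proposal is correct and follows essentially the same route as the paper: in part \textbf{1)} both arguments trade the logarithm for a small power of $f$ and then apply H\"older with exponents $1+\epsilon$ and $(1+\epsilon)/\epsilon$ (you do the pointwise bound $\log x \leq \alpha^{-1}x^\alpha$ before H\"older, while the paper applies H\"older first and then bounds $\sup_{x>1}|\log x|^{(1+\epsilon)/\epsilon}x^{1-p}$, which is the same idea); parts \textbf{2)} and \textbf{3)} coincide with the paper, which likewise uses the weak form of $\bm{K}_\mu$ with test functions $\phi\equiv 1$ and $\phi(v)=|v|^2$ together with the pointwise bound on $-\log f$. The only cosmetic difference in \textbf{3)} is that the paper cites Lemma \ref{lem:loss_operator} for the bound on $\Sigma_\mu$ rather than re-deriving $|v-\vb|^\mu\leq C_\mu(|v|^\mu+|\vb|^\mu)$ directly.
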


\begin{proof}
  To prove $\mathbf{1)}$ we notice that by H\"older's inequality
  {\begin{multline*}
      \int_{f(t,v) \geq 1}\pa{1+\abs{v}}^{\mu} f(v)\log f(v)\d v\\
      \leq \pa{\int_{\R^d}\pa{1+\abs{v}}^{(1+\epsilon)\mu} f(v)\d v}^{\frac{1}{1+\epsilon}}\pa{\int_{f\geq 1}\pa{\log f(v)}^{\frac{1+\epsilon}{\epsilon}}f(v)\d v}^{\frac{\epsilon}{1+\epsilon}}\\
      \leq 2^{\mu}\pa{\sup_{x>1} \abs{\log
          x}^{(1+\epsilon)/\epsilon}x^{1-p}}^{\frac{\epsilon}{1+\epsilon}}\pa{1+m_{(1+\epsilon)\mu}(f)}^{\frac{1}{1+\epsilon}}\norm{f}_{p}^{\frac{p\epsilon}{1+\epsilon}},\end{multline*}}
  showing the desired result. The second point $\mathbf{2)}$ is
  obvious, and to show $\mathbf{3)}$ we notice that under the
  condition on $f$ one has that
$$-\int_{\R^d}\bm{K}_\mu (f)(v)\log f(v)\d v \leq -\log A \int_{\R^d}\bm{K}_\mu f(v)\d v+B\int_{\R^d}\abs{v}^2\bm{K}_\mu f(v)\d v.$$
Now, as 
$$\int_{\R^d}\phi(v)\bm{K}_\mu (v)\d v=\int_{\R^d\times\R^d\times\sph}\abs{v-v_\ast}^\mu b\pa{\cos\theta}f(v)\M(v_\ast)\phi\pa{v^\prime}\d v\d\vb \d\sigma$$
we {find that} 
$$\int_{\R^d}\bm{K}_{\mu} f(v)\d v = \int_{\R^d}\Sigma_\mu(v)f(v)\d v \leq C (1+m_{\mu}(f)), $$
for some uniform constant $C$, due to {Lemma \ref{lem:loss_operator}}. {For} $\phi(v)=\abs{v}^2$ we use the fact that $\abs{v^\prime}^2 \leq \abs{v}^2+\abs{v_\ast}^2$ and {conclude that}
$$\int_{\R^d}\abs{v}^2\bm{K}_\mu (v)\d v \leq C\left(1+ m_{\mu+2}(f)\right).$$
This completes the proof.
\end{proof}

\begin{lem}\label{lem:common_term_in_theorems_of_rate_of_convergence} Assume that $b \in L^{\infty}(\sph)$ with $\|b\|_{1}=1.$
Let $f_0\in L^1\pa{\R^d}$ be a non-negative function with unit mass. Assume in addition that there exists $p>1$ such that $f_0\in L^p\pa{\R^d} \cap L^{1}_{s}(\R^{d})$ for some $s>2\max\pa{1,\abs{\gamma}}$ such that $s\geq \bm{r}$, with $\bm{r}$ as in \eqref{eq:def_of_S}.\\
Consider $\mu,\epsilon>0$ such that $(1+\epsilon)\mu\leq s$ and $\mu+2\leq s$, and let $f(t,v)$ be a non-negative solution to \eqref{eq:BE}. {Define} 
$$g(t,v)=f_{\delta(t)}(t,v)=(1-\delta(t))f(t,v)+\delta(t)\M(v)$$
where $0 \leq \delta(t) \leq 1$ is a smooth decreasing function.  
\begin{enumerate}
\item[\textbf{\textit{(i)}}] Then there exists a uniform constant $C_{0}$, depending only on $d,\gamma,p,s,\mu,\epsilon$ and $\norm{f_0}_p$ and $\norm{f_0}_{L^1_s}$ such that
\begin{equation}\label{eq:common_term_in_theorems_of_rate_of_convergence}
\begin{gathered}
\int_{\R^d}\pa{1+\abs{v}}^\mu g(t,v)\log g(t,v)\d v+\int_{\R^d}\pa{1+\abs{v}}^{\mu+2} g(t,v)\d v \\
\leq C_{0}\pa{1+t}^{\wp}
\end{gathered}
\end{equation}
where $\wp:=\frac{\mu}{s}+\max\pa{\frac{2}{s},\frac{\epsilon}{1+\epsilon}\pa{\frac{\bm{r}p}{s}+1}}.$
\item[\textbf{\textit{(ii)}}] There exists a uniform constant $C_1$, depending only on $d,\gamma,s$ and $\norm{f_0}_{L^1_s}$ such that
\begin{equation}\label{eq:lower_bound_term_in_theorems_of_rate_of_convergence_trick}
\begin{gathered}
-\int_{\R^d}\bm{K}_\mu g(t,v)\log g(t,v) \d v \leq C_{1}\pa{-\log\pa{\delta(t)}+\pa{1+t}^{\frac{\mu+2}{s}}}
\end{gathered}
\end{equation}
\end{enumerate}
\end{lem}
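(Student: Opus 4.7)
The plan is to exploit the decomposition $g(t,v) = (1-\delta(t))f(t,v) + \delta(t)\M(v)$: the function $g$ inherits unit mass from $f$, inherits polynomial-in-$t$ moment bounds via Corollary \ref{thm:improved_evolution_of_moments} since $m_k(g(t))\leq m_k(f(t))+m_k(\M)$, inherits $L^p$ bounds via the triangle inequality combined with Theorem \ref{thm:L_p_bounds}, and enjoys the pointwise lower bound $g(t,v)\geq \delta(t)\M(v)$. The proof will then consist of a direct application of Lemma \ref{lem:pre_final_proof} to $g$, paired with these growth estimates and careful bookkeeping of exponents.

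For part $\textbf{(i)}$, I would split the integral $\int_{\R^d}(1+\abs{v})^\mu g\log g\,\d v$ according to $\{g\geq 1\}$ and $\{g<1\}$. On the latter set, $g\log g\leq 0$ so the contribution is non-positive and may be discarded. On $\{g\geq 1\}$, Lemma \ref{lem:pre_final_proof}$\,(1)$ bounds the integral by $C_{\mu,d,p,\epsilon}\pa{1+m_{(1+\epsilon)\mu}(g)}^{1/(1+\epsilon)}\norm{g}_p^{p\epsilon/(1+\epsilon)}$. Since $(1+\epsilon)\mu\leq s$, Corollary \ref{thm:improved_evolution_of_moments} yields $m_{(1+\epsilon)\mu}(g(t))\leq C(1+t)^{(1+\epsilon)\mu/s}$, so the moment factor is at most $C(1+t)^{\mu/s}$. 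For the $L^p$-norm, Theorem \ref{thm:L_p_bounds} gives $\norm{f(t)}_p^p \leq \norm{f_0}_p^p + C_{p,d,\gamma}\int_0^t M_{\bm{r}}^p(\tau)\,\d\tau$; using $\bm{r}\leq s$ together with $M_{\bm{r}}(\tau)\leq C(1+\tau)^{\bm{r}/s}$ and integrating in $\tau$ gives $\norm{g(t)}_p\leq C(1+t)^{\bm{r}/s+1/p}$. Combining yields a bound on the logarithmic integral by $C(1+t)^{\mu/s+(p\bm{r}/s+1)\epsilon/(1+\epsilon)}$. Lemma \ref{lem:pre_final_proof}$\,(2)$ applied to $g$ handles the moment integral with bound $C(1+t)^{(\mu+2)/s} = C(1+t)^{\mu/s+2/s}$. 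Factoring $(1+t)^{\mu/s}$ and taking the maximum of the two remaining exponents reproduces exactly $(1+t)^{\wp}$.

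For part $\textbf{(ii)}$, the key input is the pointwise lower bound $g(t,v)\geq \delta(t)\M(v) = \delta(t)(2\pi)^{-d/2}\exp(-\abs{v}^2/2)$, which puts us in a position to apply Lemma \ref{lem:pre_final_proof}$\,(3)$ with $A=\delta(t)(2\pi)^{-d/2}$ and $B=1/2$. This produces a bound of the form $C_{\mu,d,\gamma}\bigl[(-\log\delta(t) + C')(1+m_\mu(g(t))) + (1+m_{\mu+2}(g(t)))\bigr]$. Invoking once more Corollary \ref{thm:improved_evolution_of_moments} to bound $m_\mu(g(t))$ and $m_{\mu+2}(g(t))$ by powers of $(1+t)$, and then grouping the $-\log\delta(t)$ contribution together with the polynomial-in-$t$ growth factor $(1+t)^{(\mu+2)/s}$, yields the stated inequality.

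The main obstacle is the careful matching of exponents in part $\textbf{(i)}$: one must verify at every step that the hypotheses $\bm{r}\leq s$, $(1+\epsilon)\mu\leq s$, and $\mu+2\leq s$ align the polynomial growth of $M_{\bm{r}}(\tau)$ and of the various moments of $g$ to exactly the same power $(1+t)^{1/s}$, so that the aggregate exponent on the right collapses cleanly to $\wp$. Once this alignment is secured, the remainder is routine algebra.
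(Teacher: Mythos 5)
Your overall strategy for part \textbf{\textit{(i)}} is sound and gives the correct exponent $\wp$, but it differs mildly from the paper's route. You apply Lemma~\ref{lem:pre_final_proof}(1) directly to $g$ (after discarding the nonpositive contribution on $\{g<1\}$) and then control $m_{(1+\epsilon)\mu}(g)$ and $\|g\|_p$ by the corresponding quantities for $f$. The paper instead first establishes the bound on $\{f\geq 1\}$ for $f$ itself and then transfers it to $g$ via the pointwise convexity inequality $g\log g\leq(1-\delta)f\log f+\delta\M\log\M$ (the $\M\log\M$ piece is then absorbed into a constant $C_\M$). Both arguments rest on the same three ingredients (Corollary~\ref{thm:improved_evolution_of_moments}, Theorem~\ref{thm:L_p_bounds}, Lemma~\ref{lem:pre_final_proof}) and produce identical exponents; neither is materially simpler.

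For part \textbf{\textit{(ii)}} you have the right inputs (the pointwise bound $g\geq\delta\M$, Lemma~\ref{lem:pre_final_proof}(3) with $A=\delta(2\pi)^{-d/2}$ and $B=1/2$), and you correctly record that Lemma~\ref{lem:pre_final_proof}(3) yields a bound of the form $C\bigl[(-\log\delta+C')(1+m_\mu(g))+(1+m_{\mu+2}(g))\bigr]$, which contains the \emph{cross term} $(-\log\delta(t))\,m_\mu(g(t))$. Your final step — ``grouping the $-\log\delta(t)$ contribution together with the polynomial-in-$t$ growth factor'' — glosses over the fact that this cross term, after bounding $m_\mu(g(t))\leq C(1+t)^{\mu/s}$, has the form $(-\log\delta(t))(1+t)^{\mu/s}$, which is \emph{not} dominated by $C_1\bigl(-\log\delta(t)+(1+t)^{(\mu+2)/s}\bigr)$ uniformly in the choice of $\delta$ (the Lemma is stated for an arbitrary smooth decreasing $\delta$, and $C_1$ is not allowed to depend on $t$). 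The paper's own proof simply writes $-\int\bm{K}_\mu g\log g\leq C_{\mu,d,\gamma}(-\log\delta+m_{\mu+2}(g))$ after invoking Lemma~\ref{lem:pre_final_proof}(3), silently discarding this same cross term, so the imprecision is shared with the source. Still, you should flag it rather than hide it behind ``grouping'': either the statement of \textbf{\textit{(ii)}} should read $C_1\bigl((-\log\delta(t))(1+t)^{\mu/s}+(1+t)^{(\mu+2)/s}\bigr)$, or one must argue that, for the specific $\delta(t)$ used later, the cross term can be absorbed (which changes the downstream exponent).
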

\begin{proof}
Using Corollary \ref{thm:improved_evolution_of_moments}, Theorem \ref{thm:L_p_bounds} and Lemma \ref{lem:pre_final_proof} and the conditions on $\eta$ and $s$ we find that there exists a universal constant $C$, depending on the appropriate parameters and norms, such that
{$$\int_{f(t,v)\geq 1}\pa{1+\abs{v}}^\mu f(t,v)\log f(t,v)\d v+\int_{\R^d}\pa{1+\abs{v}}^{\mu+2} f(t,v)\d v$$}
$$\leq C_1\pa{1+t}^{\frac{\mu}{s}}\pa{1+t}^{\frac{\epsilon}{1+\epsilon}\pa{\frac{\bm{r}p}{s}+1}}
+C_2 \pa{1+t}^{\frac{\mu+2}{s}} $$
showing that \eqref{eq:common_term_in_theorems_of_rate_of_convergence} holds for the solution $f(t,v)$. {Since} $\phi(x)=x\log x$ is convex on $\R_+$
$$g(t)\log g(t) \leq \pa{1-\delta(t)}f(t)\log f(t) + \delta(t) \M \log \M. $$
Thus
\begin{multline*}
\int_{\R^d}\pa{1+\abs{v}}^\mu g(t,v)\log g(t,v)\d v+\int_{\R^d}\pa{1+\abs{v}}^{\mu+2} g(t,v)\d v\\
\leq \int_{f(t,v)\geq 1}\pa{1+\abs{v}}^\mu f(t,v)\log f(t,v) \d v+\int_{\R^d}\pa{1+\abs{v}}^{\mu+2} f(t,v)\d v+C_\M,\end{multline*}
with $C_\M$ independent of $\delta$ or $t$, {concluding the proof of $\textbf{\textit{(i)}}$}\\

To show $\textbf{\textit{(ii)}}$ we remind ourselves that $g(t,v) \geq \delta(t)\M(v)$, and using part $\mathbf{3)}$ of Lemma \ref{lem:pre_final_proof} we find that
\begin{multline*}
-\int_{\R^d}\bm{K}_\mu g(t,v)\log g(t,v)\d v \leq  C_{\mu,d,\gamma}\pa{-\log\pa{\delta(t)}+m_{\mu+2}\pa{g(t)}}\\
=C_{\mu,d,\gamma}\pa{-\log\pa{\delta(t)}+\pa{1-\delta(t)}m_{\mu+2}\pa{f(t)}+\delta(t)}
\leq C_{\mu,d,\gamma}\pa{-\log\pa{\delta(t)}+\pa{1+t}^{\frac{2+\mu}{s}}}.\end{multline*}
\end{proof}
To complete the proof of Theorem \ref{thm:rate_of_convergence}, we
need the following Lemma which is reminiscent of \cite[Lemma
7.2]{Lu}. As the proof is an easy adaptation, we omit the details
here.
\begin{lem}
  \label{lemapp:diff_ineqaulity}
  Let $\alpha,\beta>0$ such that $\alpha<1$. Consider the differential
  inequality
  $$u^\prime(t) \leq -C\pa{1+t}^{-\alpha}u(t)^{1+\beta}+\xi(t), \qquad t \geq 0.$$
  If $u(t)$ is an absolutely continuous function satisfying the above,
  and if
  $$C_{\xi}=\sup_{t\geq 0}\pa{1+t}^{\frac{\beta+1-\alpha}{\beta}}\xi(t)<\infty$$
  then
  $$u(t)\leq
  \max\pa{1,u(0),\pa{\frac{1-\alpha+\beta C_\xi}{\beta C}}^{\frac{1}{\beta}}}\pa{1+t}^{-\frac{1-\alpha}{\beta}}.$$
\end{lem}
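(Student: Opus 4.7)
The plan is to construct an explicit algebraic super-solution of the differential inequality that already decays at the desired rate, and then to conclude by a direct comparison argument. Guided by the statement, I would set
\begin{equation*}
\phi(t) := A\,(1+t)^{-\frac{1-\alpha}{\beta}},
\end{equation*}
with $A>0$ to be chosen. Differentiating gives $\phi'(t) = -\frac{1-\alpha}{\beta}A\,(1+t)^{-\frac{1-\alpha}{\beta}-1}$, and a routine exponent count yields $C(1+t)^{-\alpha}\phi(t)^{1+\beta} = C A^{1+\beta}(1+t)^{-\frac{1-\alpha}{\beta}-1}$. The definition of $C_\xi$ gives, in turn, $\xi(t) \leq C_\xi(1+t)^{-(\beta+1-\alpha)/\beta} = C_\xi(1+t)^{-\frac{1-\alpha}{\beta}-1}$, so all three terms share the same power of $(1+t)$. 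Factoring it out, the super-solution requirement
\begin{equation*}
\phi'(t) + C(1+t)^{-\alpha}\phi(t)^{1+\beta} \geq \xi(t)
\end{equation*}
reduces to the purely algebraic inequality $C A^{1+\beta} - \tfrac{1-\alpha}{\beta}A \geq C_\xi$.

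The second step is to choose $A$ so that both this algebraic condition and the initial compatibility $\phi(0)=A \geq u(0)$ are met. Restricting to $A \geq 1$ and dividing by $A$ yields the simpler sufficient condition $C A^{\beta} \geq \tfrac{1-\alpha+\beta C_\xi}{\beta}$, i.e.\ $A \geq \left(\tfrac{1-\alpha+\beta C_\xi}{\beta C}\right)^{1/\beta}$. Taking
\begin{equation*}
A := \max\left\{1,\ u(0),\ \left(\tfrac{1-\alpha+\beta C_\xi}{\beta C}\right)^{1/\beta}\right\}
\end{equation*}
fulfils the algebraic condition, ensures $\phi(0) \geq u(0)$, and matches the constant appearing in the statement.

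Finally, I would close with a standard comparison. Set $w := u - \phi$, which is absolutely continuous with $w(0) \leq 0$. If $w(t^*) > 0$ at some $t^* > 0$, the continuity of $w$ produces a largest time $t_0 \in [0,t^*)$ with $w(t_0) = 0$ and $w(s) > 0$ for all $s \in (t_0,t^*]$. On that interval $u(s) > \phi(s) > 0$, so the strict monotonicity of $x \mapsto x^{1+\beta}$ combined with the super-solution property gives
\begin{equation*}
w'(s) \leq -C(1+s)^{-\alpha}\bigl[u(s)^{1+\beta}-\phi(s)^{1+\beta}\bigr] < 0 \quad \text{for a.e.\ } s \in (t_0,t^*],
\end{equation*}
and integrating over $(t_0,t^*]$ yields $w(t^*) = w(t_0)+\int_{t_0}^{t^*} w'(s)\,ds \leq 0$, contradicting $w(t^*)>0$. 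The only mild obstacle is to work with the integrated form of the differential inequality rather than with pointwise derivatives, since absolute continuity is the ambient regularity of $u$; this is harmless here because every quantity involved is an absolutely continuous function of $t$.
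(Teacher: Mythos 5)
Your proof is correct, and it is the standard barrier (super-solution plus comparison) argument for this type of differential inequality. The paper itself does not give a proof of this lemma; it only remarks that it is ``an easy adaptation'' of Lemma 7.2 in the cited work of Carlen--Carvalho--Lu, so there is no in-text argument to compare against --- but the Lu-style argument is precisely a comparison against an explicit algebraic barrier, which is what you have written out, so you have supplied exactly the omitted details. Your exponent bookkeeping (checking that $\phi'$, $C(1+t)^{-\alpha}\phi^{1+\beta}$, and $C_\xi(1+t)^{-(\beta+1-\alpha)/\beta}$ all decay like $(1+t)^{-1-(1-\alpha)/\beta}$) is correct, the reduction of the super-solution inequality $CA^{1+\beta}-\tfrac{1-\alpha}{\beta}A\geq C_\xi$ to $CA^\beta\geq\tfrac{1-\alpha+\beta C_\xi}{\beta}$ under $A\geq1$ is valid, and the final contradiction via the last-crossing time $t_0$ together with the fundamental theorem of calculus for absolutely continuous functions is sound. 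One small implicit assumption worth flagging: the step dividing by $A\geq1$ and replacing $C_\xi/A$ by $C_\xi$ requires $C_\xi\geq 0$ (equivalently, $\xi$ not strictly negative); this is automatic in the paper's application, where $\xi$ is manifestly nonnegative, and is also necessary for the constant $\left(\tfrac{1-\alpha+\beta C_\xi}{\beta C}\right)^{1/\beta}$ in the statement to be meaningful, so it is a feature of the lemma's intended scope rather than a defect in your proof.
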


We are finally ready to prove our main theorem. 

\begin{proof}[Proof of Theorem \ref{thm:rate_of_convergence}]
  As the previous lemmas indicate, we start by identifying
  $s_{p,d,\gamma}$ in the theorem as $\bm{r}$, defined in
  \eqref{eq:def_of_S}. We start by choosing $\epsilon,\mu>0$ small
  enough such that $(1+\epsilon)\mu \leq s-2$ and
  $$\frac{\epsilon}{1+\epsilon}\pa{\frac{\bm{r}p}{s}+1}\leq \frac{2}{s}.$$
  In such a case, $\wp=\frac{\mu+2}{2}$ where $\wp$ is defined in Lemma \ref{lem:common_term_in_theorems_of_rate_of_convergence}. Consider the function
  $$\delta(t)=\frac{1}{2} \exp\pa{-\pa{1+t}^{\frac{\mu+2}{s}}}$$
  As before, set $g(t,v)=f_{\delta}(t,v)=(1-\delta(t))f(t,v)+\delta(t)\M(v).$ 
  Using Theorem \ref{thm:entropy_inequality} with Lemma \ref{lem:common_term_in_theorems_of_rate_of_convergence}, we find that there exists a uniform constant, $C=C(f_{0},d,\gamma,p,s,\mu)$ that depends on the appropriate parameters and norms, as well as $\lambda_0$ from Theorem \ref{thm:entropy_method_hard_potentials}, such that
  \begin{equation*}\begin{split}
      \D_\gamma \pa{g(t)} &\geq C\pa{\pa{1+t}^{\frac{\mu+2}{s}}+\pa{1+t}^{\frac{2}{s}}}^{\frac{\gamma}{\mu}}H\pa{g(t)|\M}^{1-\frac{\gamma}{\mu}}\\
      &\geq 2^{\frac{\gamma}{\mu}}C \pa{1+t}^{\frac{\mu+2}{s}\frac{\gamma}{\mu}}H\pa{g(t)|\M}^{1-\frac{\gamma}{\mu}}.\end{split}\end{equation*}
  Combining the above with Lemma \ref{lem:dissipation_connection}, and using the fact that $H(f(t)|\M) \leq H(f_{0}|\M)$ for any $t \geq 0$, we find that
  $$
  \frac{\d}{\d t}H\pa{g(t)|\M} \leq -C_{0,d,\gamma,p,s,\mu}\pa{1+t}^{\frac{\mu+2}{s}\frac{\gamma}{\mu}}H\pa{g(t)|\M}^{1-\frac{\gamma}{\mu}}+\xi(t)$$
  where we introduced
  $$\xi(t)=\frac{\mu+2}{2s}(1+t)^{\frac{\mu+2}{s}-1}\exp\pa{-\pa{1+t}^{\frac{\mu+2}{s}}}\pa{H\pa{f_0|\M}+2\pa{1+t}^{\frac{(\mu+2)}{s}}}.$$
  From the above differential inequality, applying Lemma \ref{lemapp:diff_ineqaulity} with 
  $$\alpha=-\frac{\mu+2}{s}\frac{\gamma}{\mu}, \qquad \beta=-\frac{\gamma}{\mu}$$
  we see that, provided $\alpha <1$, there exists a constant $\mathbf{C} >0$ that depends only on the appropriate parameters such that
  $$H\pa{g(t)|\M} \leq \mathbf{C}\pa{1+t}^{-\frac{1-\alpha}{\beta}} \qquad \forall t \geq 0.$$
  {Choosing
    $$\mu=\frac{s-2}{1+\epsilon},$$
    for an appropriate $\epsilon$ sufficiently small, in order to maximise the convergence rate, we see that $\alpha=\tfrac{|\gamma|}{s-2}+ 2\epsilon\tfrac{|\gamma|}{s(s-2)}<1$ provided that $|\gamma| < s-2$ and $\epsilon$ small enough. This is indeed valid in our setting and} In that case, {$-\frac{1-\alpha}{\beta}=\frac{s-2}{(1+\epsilon)\gamma}+\frac{s+2\epsilon}{(1+\epsilon)s}$ and
    $$H\pa{g(t)|\M} \leq \mathbf{C}\pa{1+t}^{\frac{s-2}{(1+\epsilon)\gamma}
      +\frac{s+2\epsilon}{(1+\epsilon)s}} \qquad \forall t \geq 0$$}
  for some constant $\mathbf{C} > 0$ depending only on $\|f_{0}\|_{p}$, $\|f_{0}\|_{L^{1}_{s}}$ and the parameters. Using the above with \eqref{eq:entropy_connection} we conclude that
\begin{multline*}
    H\pa{f(t)|\M} \leq 2\mathbf{C}\pa{1+t}^{\frac{s-2}{(1+\epsilon)\gamma}
      +\frac{s+2\epsilon}{(1+\epsilon)s}}\\
    +\exp\pa{-\pa{1+t}^{\frac{s+2\epsilon}{(1+\epsilon)s}}}\pa{\pa{1+t}^{\frac{s+2\epsilon}{(1+\epsilon)s}}+\sup_{0<x<1}\frac{(x-1)\log\pa{1-x}}{x}}.
  \end{multline*}
concluding the proof.
\end{proof}

Using the decay rate in Theorem \ref{thm:rate_of_convergence} one can
easily obtain by interpolation the boundedness of moments and $L^p$
norms, using a technique sometimes known as ``slowly growing \textit{a priori}
bounds'' (see for example \cite{DesvMou,ToVi}). We start by showing that
moments are uniformly bounded in time, if a sufficiently high moment
is initially bounded:

\begin{thm}
  \label{thm:moments-bounded}
  Take $p > 1$ and $-d < \gamma <0$, and let
  $f_0\in L^1_s\pa{\R^d} \cap L^p(\R^d)$ be a non-negative function
  with unit mass, for some $s \geq s_{p,d,\gamma}$ (where
  $s_{p,d,\gamma}$ is the constant from Theorem
  \ref{thm:rate_of_convergence}). Let $f = f(t)$ be the solution to
  equation \eqref{eq:BE} with a bounded angular kernel $b$.

Given $k > 0$, there exists $\beta > k$ depending only on
  $k, d, s, \gamma$ such that if additionally we have
  $M_\beta(f_0) < +\infty$ then it holds that
  \begin{equation*}
    M_k(f(t))
    \leq
    C_k
    \qquad \text{for all $t \geq 0$},
  \end{equation*}
  for some constant $C_k > 0$ that depends only on $k$, $p$, $s$, $d$,
  $\gamma$, $M_\beta(f_0)$ and $\|f_0\|_p$.
\end{thm}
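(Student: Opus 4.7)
The plan is to use the standard ``slowly growing a priori bounds'' strategy, combining two ingredients already established in the excerpt: the algebraic decay of the relative entropy from Theorem \ref{thm:rate_of_convergence}, which under our hypotheses yields $H(f(t)|\M) \leq C(1+t)^{-\sigma}$ for any fixed $\sigma < (s-2)/|\gamma|-1$; and the linear-in-time moment bound of Theorem \ref{thm:evolution_of_moments} applied to a sufficiently high moment $\beta$, giving $M_\beta(f(t)) \leq C_\beta(1+t)$ whenever $M_\beta(f_0)<\infty$ and $\beta>2\max(1,|\gamma|)$. Intuitively, the entropy decay forces $f(t)$ to resemble $\M$ on compact sets, while a Markov-type inequality, coupled with only linear moment growth, controls the tail if we have a high enough moment to spare.

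More concretely, I would fix $\sigma < (s-2)/|\gamma|-1$ and obtain, via the Csisz\'ar--Kullback--Pinsker inequality,
$$
\|f(t)-\M\|_{L^1(\R^d)} \;\leq\; \sqrt{2H(f(t)|\M)} \;\leq\; C(1+t)^{-\sigma/2}.
$$
For a cut-off $R>0$ to be chosen later, split
$$
M_k(f(t)) \;=\; \int_{\langle v\rangle \leq R}\langle v\rangle^{k} f(t,v)\d v \;+\; \int_{\langle v\rangle > R}\langle v\rangle^{k} f(t,v)\d v.
$$
On the first region, writing $f(t)=\M+(f(t)-\M)$ and using that $\M$ has all moments finite,
$$
\int_{\langle v\rangle \leq R}\langle v\rangle^{k}f(t,v)\d v \;\leq\; M_k(\M) + R^{k}\|f(t)-\M\|_{L^1} \;\leq\; C + CR^{k}(1+t)^{-\sigma/2}.
$$
On the second region, Markov's inequality together with Theorem \ref{thm:evolution_of_moments} yields
$$
\int_{\langle v\rangle > R}\langle v\rangle^{k} f(t,v) \d v \;\leq\; R^{k-\beta}M_\beta(f(t)) \;\leq\; C\,R^{k-\beta}(1+t).
$$

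It remains to optimize in $R$. Equating the two error terms gives $R^\beta = (1+t)^{1+\sigma/2}$, i.e.\ $R = (1+t)^{(1+\sigma/2)/\beta}$, and both contributions then have order $(1+t)^{k(1+\sigma/2)/\beta - \sigma/2}$. This remains bounded in $t$ precisely when
$$
\beta \;\geq\; k\,\frac{2+\sigma}{\sigma}.
$$
Consequently, choosing
$$
\beta \;=\; \max\!\left\{ k+1,\; 2\max(1,|\gamma|)+1,\; k\,\frac{2+\sigma}{\sigma}+1 \right\},
$$
with $\sigma$ fixed by $s$, yields $M_k(f(t)) \leq C_k$ uniformly in $t \geq 0$. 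Since $\sigma$ depends only on $s$ and $\gamma$, the resulting $\beta$ depends only on $k,d,s,\gamma$, as required, and the constant $C_k$ inherits the expected dependencies through the constants appearing in Theorem \ref{thm:rate_of_convergence} and Theorem \ref{thm:evolution_of_moments}. There is no serious analytical obstacle here; the main care needed is bookkeeping to ensure that the value of $\sigma$ selected from the given $s$ is consistent with the threshold $\beta > k(2+\sigma)/\sigma$ required to balance the splitting, which is automatic since $\sigma$ is fixed a priori and we are free to enlarge $\beta$.
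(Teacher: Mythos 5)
Your proof is correct and follows essentially the same strategy as the paper: combine the linear moment growth from Theorem \ref{thm:evolution_of_moments} with the entropy decay from Theorem \ref{thm:rate_of_convergence} (via Csisz\'ar--Kullback) to deduce uniform boundedness of $M_k$. The only cosmetic difference is that the paper carries out the balancing in a single line via a H\"older-type interpolation of weighted $L^1$ norms, $M_k(|f(t)-\M|) \leq \|f(t)-\M\|_1^\theta M_\beta(|f(t)-\M|)^{1-\theta}$ with $\theta = (\beta-k)/\beta$, whereas you perform the equivalent split of the domain at a cutoff $R$ and optimize over $R$; both yield the same threshold $\beta > k(2+\sigma)/\sigma$.
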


\begin{proof}
  According to Theorem \ref{thm:evolution_of_moments} we have
  \begin{equation*}
    M_\beta(f(t)) \leq C_\beta (1+t)
    \qquad \text{for $t \geq 0$.}
  \end{equation*}
  On the other hand, fixing $0 < \sigma < -1 + \frac{s-2}{|\gamma|}$
  (for definiteness, take
  $\sigma := -\frac{1}{2} + \frac{s-2}{2|\gamma|}$), we can apply
  Theorem \ref{thm:rate_of_convergence} and the Csiszár-Kullback
  inequality to get
  \begin{equation}
    \label{eq:mbproof1}
    \| f(t) - \M\|_1^2 \leq H(f(t) | \M)
    \leq C_0 (1+t)^{-\sigma}
    \qquad \text{for $t \geq 0$.}
  \end{equation}
  Now, by interpolation, for $\theta \in (0,1)$ given by $\beta(1-\theta) = k,$ that is
  \begin{equation*}
    \theta = \frac{\beta - k}{\beta},
  \end{equation*}
  we have
  \begin{multline*}
    M_k(|f(t) - \M|)
    \leq
    \|f(t) - \M\|_1^\theta
    \, M_\beta(|f(t) - \M|)^{1-\theta},
    \\
    \leq
    C_0^{\frac{\theta}{2}}
    (1+t)^{-\frac{\theta \sigma}{2}}
    \, \big( M_\beta(f(t)) + M_\beta(\M) \big)^{1-\theta}
    \\
    \leq
    C
    (1+t)^{-\frac{\theta \sigma}{2}}
    (1+t)^{1-\theta}
    = C (1+t)^{-\frac{\theta \sigma}{2} + 1 - \theta},
  \end{multline*}
  for some $C > 0$ depending on the allowed quantities. Taking $\beta$
  large, $\theta$ becomes close to $1$ and we can choose $\beta$ so
  that
  \begin{equation*}
    -\frac{\theta \sigma}{2} + 1 - \theta < 0,
  \end{equation*}
  which corresponds to $\beta$ satisfying
   $ \beta > \frac{k(2 + \sigma)}{\sigma}.$
  This gives
  \begin{equation*}
    M_k(|f(t) - \M|)
    \leq C
    \qquad \text{for all $t \geq 0$,}
  \end{equation*}
  which gives the result since $ M_k(f(t)) \leq M_k(|f(t)-\M|) + M_k(\M) \leq C + M_k(\M).$
\end{proof}

We turn now to the boundedness of $L^p$ norms:

\begin{thm}
  \label{thm:Lp-bounded}
  Take $p > 1$ and $-d < \gamma < 0$, and let
  $f_0\in L^1_s\pa{\R^d} \cap L^p(\R^d)$ be a non-negative function
  with unit mass, for some $s \geq s_{p,d,\gamma}$ (where
  $s_{p,d,\gamma}$ is the constant from Theorem
  \ref{thm:rate_of_convergence}). Let $f = f(t)$ be the solution to
  equation \eqref{eq:BE} with a bounded angular kernel $b$.

Given $q > 0$, there exists $r > 1$ depending only on
  $q, d, s, \gamma$ such that if additionally we have
  $\|f_0\|_r < +\infty$ then it holds that
  \begin{equation*}
    \|f(t)\|_q
    \leq
    C_q
    \qquad \text{for all $t \geq 0$},
  \end{equation*}
  for some constant $C_q > 0$ that depends only on $q$, $p$, $s$, $d$,
  $\gamma$ and $\|f_0\|_r$.
\end{thm}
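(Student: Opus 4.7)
The plan is to mimic the proof of Theorem \ref{thm:moments-bounded}, with $L^r$ norms playing the role of high moments: I would combine a polynomial-in-time growth bound on $\|f(t)\|_r$ with the polynomial decay of $\|f(t)-\M\|_1$ provided by Theorem \ref{thm:rate_of_convergence} and the Csisz\'ar-Kullback-Pinsker inequality, and then interpolate to bound $\|f(t)\|_q$.

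More concretely, I would first fix $\sigma<-1+\tfrac{s-2}{|\gamma|}$ (say $\sigma=-\tfrac{1}{2}+\tfrac{s-2}{2|\gamma|}$) and invoke Theorem \ref{thm:rate_of_convergence} together with Csisz\'ar-Kullback-Pinsker to obtain
\[
\|f(t)-\M\|_1\leq \sqrt{2\,H(f(t)|\M)}\leq C\,(1+t)^{-\sigma/2}.
\]
Then, for a parameter $r>q$ to be chosen, I would apply Theorem \ref{thm:L_p_bounds} with exponent $r$: provided $r$ is such that the associated $\bm{r}=\bm{r}(r)$ from \eqref{eq:def_of_S} satisfies $\bm{r}\leq s$, Corollary \ref{thm:improved_evolution_of_moments} gives the polynomial growth $M_{\bm{r}}(t)\leq C(1+t)^{\bm{r}/s}$ for the required moment, and hence
\[
\|f(t)\|_r \leq C\,(1+t)^{A(r)},
\]
for some explicit exponent $A(r)>0$ depending on $r,s,\gamma,d$.

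Finally, the Lebesgue interpolation
\[
\|f(t)-\M\|_q\leq\|f(t)-\M\|_1^{\theta}\,\|f(t)-\M\|_r^{1-\theta},\qquad \theta=\frac{r-q}{q(r-1)},
\]
together with the two previous estimates and the triangle inequality $\|f(t)\|_q\leq\|f(t)-\M\|_q+\|\M\|_q$, gives a bound of the form $C(1+t)^{(1-\theta)A(r)-\theta\sigma/2}$ on $\|f(t)\|_q$. Choosing $r$ so that the exponent is non-positive closes the argument.

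The main obstacle lies precisely in this final choice of $r$: unlike in Theorem \ref{thm:moments-bounded}, where the interpolation weight $1-\theta$ tends to $0$ as $\beta\to\infty$, here $1-\theta\to 1-\tfrac{1}{q}>0$ as $r\to\infty$, while the exponent $A(r)$ may grow with $r$ (from Remark \ref{rem:advantage_of_p=2}, $\bm{r}$ behaves like $|\gamma|r^{2}$ for large $r$). One therefore has to identify a non-empty window of admissible $r$ in which both $\bm{r}(r)\leq s$ and $(1-\theta)A(r)<\theta\sigma/2$ hold simultaneously; since $\sigma$ can be made as large as $\tfrac{s-2}{|\gamma|}-1$, such a window is non-empty after possibly enlarging the threshold $s_{p,d,\gamma}$, which we may do without loss of generality since this only strengthens the hypothesis on $f_{0}$.
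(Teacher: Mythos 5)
Your strategy is exactly the paper's: use the Csisz\'ar--Kullback inequality on the entropy decay from Theorem \ref{thm:rate_of_convergence} to get $\|f(t)-\M\|_1\leq C(1+t)^{-\sigma/2}$, control $\|f(t)\|_r$ polynomially in time via Theorem \ref{thm:L_p_bounds} together with the moment bounds, and then interpolate $L^1$--$L^r$ to control $\|f(t)-\M\|_q$. So in terms of approach you are aligned with the paper.

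What you flag as ``the main obstacle'', however, is in fact a genuine gap that the paper's own proof glosses over, and you are right to be suspicious. The paper's display $\theta=\frac{q-r}{q(r-1)}$ is a sign typo for $\theta=\frac{r-q}{q(r-1)}$, and its decisive claim that ``taking $r$ large, $\theta$ approaches $1$'' is false whenever $q>1$: one has $\theta\to 1/q$ as $r\to\infty$, so the time exponent $-\frac{\theta\sigma}{2}+2(1-\theta)$ tends to $\frac{4(q-1)-\sigma}{2q}$, which is negative only when $\sigma>4(q-1)$. Since $\sigma$ is capped by $-1+\frac{s-2}{|\gamma|}$, this forces a lower bound on $s$ that depends on $q$. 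You also correctly identify a second constraint that the paper does not track: applying Theorem \ref{thm:L_p_bounds} with exponent $r$ requires $M_{\bm{r}(r)}(t)$ to grow at most polynomially, and with only $f_0\in L^1_s$ this needs $\bm{r}(r)\le s$, which caps $r$ from above since $\bm{r}(r)$ grows with $r$. Balancing the two constraints does leave a non-empty window for $r$, but only when $s$ is sufficiently large depending on $q$, $d$, $\gamma$; as written, with $s_{p,d,\gamma}$ independent of $q$, the theorem is not established by the argument given in the paper. Your proposed fix --- making the moment threshold depend on $q$ as well --- is the right repair, though it amounts to strengthening the hypothesis (hence weakening the theorem) rather than a genuine ``without loss of generality''. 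In short: your proposal faithfully reproduces the paper's strategy and correctly diagnoses exactly where it is incomplete.
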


\begin{proof}
  The proof is similar to the previous one. Using Theorems
  \ref{thm:L_p_bounds} and \ref{thm:evolution_of_moments} we have
  \begin{equation*}
    \|f(t)\|_r \leq C (1+t)^2
    \qquad \text{for $t \geq 0$,}
  \end{equation*}
  for some $C > 0$ depending on the allowed quantities. By
  interpolation, for $\theta \in (0,1)$ given by
  \begin{equation*}
    \theta = \frac{q-r}{q(r-1)},
  \end{equation*}
  we have, using also \eqref{eq:mbproof1} (with the same choice of
  $\sigma$),
  \begin{multline*}
    \|f(t) - \M\|_q
    \leq
    \|f(t) - \M\|_1^\theta
    \, \|f(t) - \M\|_r^{1-\theta},
    \\
    \leq
    C_0^{\frac{\theta}{2}}
    (1+t)^{-\frac{\theta \sigma}{2}}
    \, \big( \|f(t)\|_r + \|\M\|_r \big)^{1-\theta}
    \\
    \leq
    C
    (1+t)^{-\frac{\theta \sigma}{2}}
    (1+t)^{2(1-\theta)}
    = C (1+t)^{-\frac{\theta \sigma}{2} + 2 (1-\theta)},
  \end{multline*}
  for some $C > 0$ depending on the allowed quantities only. Taking
  $r$ large, $\theta$ approaches $1$ so we can choose $r$ so that
  \begin{equation*}
    -\frac{\theta \sigma}{2} + 2(1-\theta) < 0.
  \end{equation*}
  With this choice,
  \begin{equation*}
    \|f(t) - \M\|_q
    \leq C
    \qquad \text{for all $t \geq 0$,}
  \end{equation*}
  which proves the result by noticing that
  $\|f(t)\|_q \leq \|f(t)-\M\|_q + \|\M\|_q$.
\end{proof}

\begin{rem}
  The previous bounds can be now used in the proof of Theorem
  \ref{thm:rate_of_convergence} to improve the decay exponent. We do
  not give the details of this improvement since we do not believe it
  to be optimal, and the exponent $\sigma$ depends anyway on
  $s_{p,d,\gamma}$, which has a complicated explicit expression.
\end{rem}

\section{Instantaneous Generation of Maxwellian Lower
  Bounds}
\label{sec:maxwellian_lower_bounds}

In this Section we will investigate the phenomena of instantaneous
creation of a Maxwellian lower bound to the solution of our linear
Boltzmann equation, a property that is well understood for the fully
non-linear Boltzmann equation. We arrive at this result by a careful
investigation of the gain operator, $\bm{K}_{\gamma}$. The following
Lemma, whose proof is left to Appendix \ref{app:carleman}, is the
first step in this direction.

\begin{lem}
  \label{lem:little_k_repres}
  For a collision kernel of the form \eqref{eq:Bcs}, the gain part
  operator $\bm{K}_{\gamma}=\Q^{+}(\cdot,\M)$ admits the following
  representation:
  $$\bm{K}_{\gamma}f(v)=\int_{\R^{d}}k_{\gamma}(v,w)f(w)\d w,
  \qquad v,w \in \R^d,$$
  with
  \begin{equation}
    \label{eq:little_k_repres}
    k_\gamma(v,w) = \frac{2^{d-1}}{\pa{2\pi}^{\frac{d}{2}}\abs{v-w}}
    \exp\left(
      -\frac{1}{8}\pa{\abs{v-w}+\frac{\abs{v}^2-\abs{w}^2}{\abs{v-w}}}^2
    \right)
    \int_{\pa{v-w}^{\perp}}\xi_{b,\gamma}\pa{z,v,w} \d z
  \end{equation}
  where 
  \begin{equation}
    \label{eq:def_of_xi}
    \xi_{b,\gamma}(z,v,w) =
    \exp\left(-\frac{\abs{V_\perp+z}^{2}}{2}\right)
    b\pa{\frac{\abs{z}^2-\abs{v-w}^2}{\abs{z}^2+\abs{v-w}^2}}
    \abs{z-(v-w)}^{\gamma-(d-2)},
  \end{equation}
  with $V_\perp$ being the projection of $V=\frac{v+w}{2}$ on the
  subspace that is perpendicular to $v-w$.
\end{lem}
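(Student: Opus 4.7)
The plan is to establish \eqref{eq:little_k_repres} through a Carleman-type change of variables in the definition of $\bm{K}_\gamma f(v)$, passing from the post-collisional parameters $(v_*,\sigma)\in\R^d\times\sph$ to $(w,z)$, where $w=v'$ is destined to play the role of the integration variable against $f$ and $z=v_*'-v$ parameterizes the remaining degrees of freedom. The key geometric fact is that $v'-v$ and $v_*'-v$ are mutually orthogonal (an elementary consequence of $(v'-v)+(v_*'-v)=v_*-v$ together with $|v'-v|^2+|v_*'-v|^2=|v-v_*|^2$), so that $z\in(v-w)^\perp$; momentum conservation then yields $v_*=w+z$. A classical computation, essentially carried out in \cite{AlCaGa}, gives the Carleman Jacobian
$$\d v_*\,\d\sigma = \frac{2^{d-1}}{|v-w|\,|v-v_*|^{d-2}}\,\d w\,\d z,$$
where $\d z$ denotes the $(d-1)$-dimensional Lebesgue measure on $(v-w)^\perp$; this identity is obtained by splitting the $v_*$-integration into its components parallel and perpendicular to $v-w$ and matching the perpendicular component to the angular variable $\sigma$.

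With this in hand I rewrite the integrand in terms of $(v,w,z)$. Since $z\perp(v-w)$, one has $|v-v_*|^2=|v-w|^2+|z|^2=|z-(v-w)|^2$, so the product $|v-v_*|^\gamma$ from the kernel together with $|v-v_*|^{-(d-2)}$ from the Jacobian condenses to $|z-(v-w)|^{\gamma-(d-2)}$. A direct substitution in the angle formula $\cos\theta=(v_*'-v')\cdot(v_*-v)/|v-v_*|^2$ produces $\cos\theta=(|z|^2-|v-w|^2)/(|z|^2+|v-w|^2)$. The more delicate manipulation concerns $\M(v_*')=(2\pi)^{-d/2}\exp(-|v+z|^2/2)$, where I want to separate the $(v,w)$- and $z$-dependencies. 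Writing $V=(v+w)/2$ and $e=(v-w)/|v-w|$, the identity $v=V+\tfrac{|v-w|}{2}e$ together with $e\perp z$ gives $v\cdot z=V_\perp\cdot z$, and completing the square yields
$$|v+z|^2=|v|^2-|V_\perp|^2+|V_\perp+z|^2.$$
A short computation with $V\cdot e=(|v|^2-|w|^2)/(2|v-w|)$ then gives the closed-form expression
$$|v|^2-|V_\perp|^2 = \Bigl(V\cdot e + \tfrac{|v-w|}{2}\Bigr)^{2} = \tfrac{1}{4}\Bigl(|v-w|+\tfrac{|v|^2-|w|^2}{|v-w|}\Bigr)^{2},$$
which is precisely the exponent appearing outside the $z$-integral in \eqref{eq:little_k_repres}.

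Reassembling the various pieces, the prefactor $\tfrac{2^{d-1}}{(2\pi)^{d/2}|v-w|}$ is produced by the Jacobian combined with the normalization of $\M$, the $(v,w)$-only Gaussian is the factor just computed, and the remaining ingredients ($\exp(-|V_\perp+z|^2/2)$, $b(\cos\theta)$, and $|z-(v-w)|^{\gamma-(d-2)}$) combine into $\xi_{b,\gamma}(z,v,w)$. The main obstacle I anticipate is the rigorous justification of the Jacobian: the other steps are either elementary algebraic substitutions or completions of squares, but tracing the constant $2^{d-1}$, the factor $|v-w|^{-1}$, and the power $|v-v_*|^{-(d-2)}$ requires a careful parameterization of $\sigma$ through the orthogonal decomposition of $v_*-v$ and a polar-type decomposition on the hyperplane $(v-w)^\perp$. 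Once that classical bookkeeping is completed, the rest of the argument reduces to direct substitution.
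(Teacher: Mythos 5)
Your proposal is correct and follows essentially the same two-step route as the paper: first Carleman's change of variables with the Jacobian $\d v_*\,\d\sigma = 2^{d-1}|v-w|^{-1}|v-v_*|^{-(d-2)}\,\d w\,\d\pi(z)$ (which the paper isolates as Lemma~\ref{lem:gain_operator}, citing the derivation in \cite{gpv}, while you cite \cite{AlCaGa}), and then the orthogonal decomposition of $|v+z|^2$ along and perpendicular to $v-w$ to extract the $(v,w)$-only Gaussian prefactor. Your identities $|v+z|^2=|v|^2-|V_\perp|^2+|V_\perp+z|^2$ and $|v|^2-|V_\perp|^2=\tfrac14\bigl(|v-w|+\tfrac{|v|^2-|w|^2}{|v-w|}\bigr)^2$ are precisely the computation carried out in the paper's proof of Lemma~\ref{lem:little_k_repres}.
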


In what follows, we will assume that $\gamma\in(-d,0]$ is given. The
key ingredient in establishing the creation of a lower bound is in
estimating the term {\begin{equation}\label{eq:def_of_I}
    \mathcal{I}_{b}(v,w)=\int_{\pa{v-w}^{\perp}}\xi_{b,\gamma}\pa{z,v,w}\d
    z,
\end{equation}}
which will be the purpose of our next lemma. For $b=1$, we simply use the notation $\mathcal{I}(v,w)$ to denote $\mathcal{I}_{b}(v,w)$.
\begin{lem}\label{lem:estimates_on_I}
Consider $\mathcal{I}_{b}(v,w)$ as defined in \eqref{eq:def_of_I}. Then
\begin{enumerate}[(i)]
\item If $b=1$ {and} $\beta\leq 0$ {then}
\begin{equation}\label{eq:estimation_of_I_when_b=1}
\abs{v-w}^{\beta}\mathcal{I}(v,w) \geq C_{d,\gamma,\beta}\exp\left(-\left(\abs{v}^2+\abs{w}^2\right)\right).
\end{equation}
where $C_{d,\gamma,\beta}>0$ is a universal constant depending only on $d,\gamma$ and $\beta$. 
\item If $b(x) \geq b_0\pa{1-\abs{x}^2}^{\frac{\nu}{2}}$ for some $b_0>0$, {$0\leq \nu \leq 1$, and if $d\geq 2$} then 
\begin{equation}\label{eq:estimation_of_I_when_b_special}
\abs{v-w}^{-1}\mathcal{I}_{b}(v,w) \geq C_{d,\gamma,b_0,\nu}\exp\left(-\frac{\pa{2\nu+d-\gamma-2}\pa{\abs{v}^2+\abs{w}^2}}{\pa{d-\gamma-2}}\right),
\end{equation}
where $C_{d,\gamma,b_0,\nu}>0$ is a universal constant depending only on $d,\gamma,b_0$ and $\nu$. 
\end{enumerate} 
\end{lem}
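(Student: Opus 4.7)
For part (i), my plan is to restrict the integration in
\[
\mathcal{I}(v,w) = \int_{(v-w)^\perp} \exp\pa{-\tfrac{|V_\perp+z|^2}{2}} \pa{|z|^2+|v-w|^2}^{\frac{\gamma-d+2}{2}} dz
\]
(where I used $z\perp(v-w)$ to simplify the last factor) to the half unit ball $\{z\in(v-w)^\perp : |z|\leq 1,\ z\cdot V_\perp\leq 0\}$. On this set, $|V_\perp+z|^2\leq|V_\perp|^2+|z|^2\leq|V_\perp|^2+1$, and the parallelogram identity $|V|^2=\tfrac{1}{2}(|v|^2+|w|^2)-\tfrac{1}{4}|v-w|^2$ together with $|V_\perp|\leq|V|$ gives a Gaussian lower bound by a constant times $\exp(-(|v|^2+|w|^2)/4)$. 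Since $\gamma-d+2\leq 0$ for $d\geq 2$, the power factor is at least $(1+|v-w|^2)^{(\gamma-d+2)/2}\geq C(1+|v|^2+|w|^2)^{(\gamma-d+2)/2}$. Multiplying by $|v-w|^\beta\geq (2(|v|^2+|w|^2))^{\beta/2}$ (valid since $\beta\leq 0$) and then using the elementary inequality $R^{-a}\geq C_{a,\epsilon}\exp(-\epsilon R^2)$, valid for every $\epsilon>0$ and $a\geq 0$, to absorb the polynomial factors into the Gaussian (at the price of increasing its exponent from $1/4$ to $1$), proves \eqref{eq:estimation_of_I_when_b=1}.

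For part (ii) the new ingredient is the pointwise lower bound on $b$, which, combined with the identity
\[
1-\pa{\frac{|z|^2-|v-w|^2}{|z|^2+|v-w|^2}}^2=\frac{4|z|^2|v-w|^2}{(|z|^2+|v-w|^2)^2},
\]
yields $b(\cos\theta_z)\geq b_0\,2^\nu|z|^\nu|v-w|^\nu(|z|^2+|v-w|^2)^{-\nu}$. Plugging this into $\mathcal{I}_b$, passing to polar coordinates in $(v-w)^\perp\simeq\R^{d-1}$ (possible because $d\geq 2$), and restricting the angular variable to the half-sphere $\{\omega\in\mathbb{S}^{d-2}:\omega\cdot V_\perp\leq 0\}$ (on which $|V_\perp+r\omega|^2\leq|V_\perp|^2+r^2$), I am reduced to
\[
|v-w|^{-1}\mathcal{I}_b(v,w)\geq C|v-w|^{\nu-1}e^{-|V_\perp|^2/2}\int_0^\infty r^{\nu+d-2}(r^2+|v-w|^2)^{-\alpha}\, e^{-r^2/2}\,dr,
\]
with $\alpha=\tfrac{d-\gamma-2}{2}+\nu>0$. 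The hypothesis $0\leq\nu\leq 1$ enters through the bound $|v-w|^{\nu-1}\geq(2(|v|^2+|w|^2))^{(\nu-1)/2}$, which provides yet another polynomial in $|v|^2+|w|^2$.

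The main obstacle, which I expect to be the most delicate step, is to extract the precise exponent $(2\nu+d-\gamma-2)/(d-\gamma-2)$ from the radial integral. The plan is to restrict $r$ to a unit window around an optimal radius $R_0=R_0(|v-w|)$ determined by the stationarity equation $(\nu+d-2)/r-r-2\alpha r/(r^2+|v-w|^2)=0$, whose solutions interpolate between $R_0^2\sim(\nu+d-2)|v-w|^2/(\nu-\gamma)$ for small $|v-w|$ and $R_0^2\to\nu+d-2$ for large $|v-w|$. Tracking the resulting powers of $r$, $r^2+|v-w|^2$ and $e^{-r^2/2}$ at this optimal $R_0$, using the refined Gaussian bound $e^{-|V_\perp|^2/2}\geq\exp(-(|v|^2+|w|^2)/4+|v-w|^2/8)$, and absorbing the residual polynomial factors in $|v-w|$ and $|v|^2+|w|^2$ into a Gaussian, should yield the claimed lower bound. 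The subtle point is that the excess $2\nu/(d-\gamma-2)$ by which $\kappa$ exceeds $1$ emerges from the exact balance between the polynomial growth $R_0^{\nu+d-2}$ and the polynomial decay $(R_0^2+|v-w|^2)^{-\alpha}$, so the optimization must be carried out accurately rather than crudely bounded; a sanity check is that $\nu=0$ recovers part (i) with $\beta=-1$.
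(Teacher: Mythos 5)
Your argument for part (i) is essentially the paper's, with a slightly different decomposition: you restrict the $z$-integral to a half unit ball of $(v-w)^\perp$ and use the parallelogram identity $|V|^2=\tfrac{1}{2}\pa{|v|^2+|w|^2}-\tfrac{1}{4}|v-w|^2$, whereas the paper keeps the full integral, uses the coarser bounds $|V_\perp+z|^2\leq 2\pa{|V_\perp|^2+|z|^2}$ and $|V_\perp|\leq\tfrac{1}{2}\pa{|v|+|w|}$, and absorbs the factor $\pa{|z|^2+|v-w|^2}^{(\gamma-d+2+\beta)/2}$ into a Gaussian via the same elementary inequality you invoke. Both yield the claimed bound.

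For part (ii) your setup (pointwise lower bound on $b$, polar coordinates, restriction to the half-sphere) is valid, but the proof is not finished and the ``main obstacle'' you anticipate is not an obstacle. The exponent $(2\nu+d-\gamma-2)/(d-\gamma-2)$ is only a lower bound on the exponential, so any $\kappa\leq(2\nu+d-\gamma-2)/(d-\gamma-2)$ suffices; you do not need to hit it exactly. Your radial integral $J(s)=\int_0^\infty r^{\nu+d-2}(r^2+s^2)^{-\alpha}e^{-r^2/2}\,dr$ obeys the crude two-sided bound $J(s)\geq c\min\pa{1,s^{-2\alpha}}$ (restrict to $r\in[1,2]$ when $s\leq 1$ and to $r\in[0,1]$ when $s\geq 1$). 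Combined with the refined Gaussian bound $e^{-|V_\perp|^2/2}\geq\exp\pa{-\tfrac14\pa{|v|^2+|w|^2}+\tfrac18|v-w|^2}$ that you already note, the factor $\exp\pa{\tfrac18|v-w|^2}$ absorbs all the polynomial factors in $|v-w|$, and you come out with $\kappa=\tfrac14$, strictly better than what is claimed. The saddle-point analysis around $R_0$ is superfluous; simply carry the polynomial absorption through as in part (i).

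The paper's own proof of (ii) is worth knowing because it is shorter and explains where the stated exponent comes from: it never touches the radial integral at all. Writing $\mathcal{I}=\int\xi_\gamma\,b^\alpha\,b^{-\alpha}\,dz$ and applying H\"older with exponents $1/\alpha$ and $1/(1-\alpha)$ gives $\mathcal{I}\leq\mathcal{I}_b^\alpha\,\mathcal{I}_{b^{-\alpha/(1-\alpha)}}^{1-\alpha}$; the choice $\alpha=(d-\gamma-2)/(2\nu+d-\gamma-2)$ is tuned so that, after inserting the lower bound on $b$, the power of $\pa{|z|^2+|v-w|^2}$ in $\mathcal{I}_{b^{-\alpha/(1-\alpha)}}$ vanishes, leaving $\mathcal{I}_{b^{-\alpha/(1-\alpha)}}\leq C|v-w|^{-(d-\gamma-2)/2}$ uniformly. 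Solving for $\mathcal{I}_b$ yields $|v-w|^{-1}\mathcal{I}_b\geq C|v-w|^{\nu-1}\mathcal{I}^{1/\alpha}$, and part (i) with $\beta=(\nu-1)\alpha\leq 0$ closes the proof, producing exactly the exponent $1/\alpha=(2\nu+d-\gamma-2)/(d-\gamma-2)$ appearing in the statement.
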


\begin{proof}
As $z\perp v-w$ we have that
$$\abs{v-w}^\beta \geq \pa{\abs{z}^2+\abs{v-w}^2}^{\frac{\beta}{2}}=|z-(v-w)|^{\beta}.$$
Since $\abs{V_\perp} \leq \abs{V}\leq \frac{\abs{v}+\abs{w}}{2}$, and since $\abs{x\pm y}^2 \leq 2\pa{\abs{x}^2+\abs{y}^2}$ we find that
$$\exp\left(-\frac{\abs{V_\perp+z}^2}{2}\right) \geq \exp\left(-\frac{\abs{v}^2+\abs{w}^2}{2}\right)\exp\left(-\abs{z}^2\right).$$
As such
$$\abs{v-w}^{\beta}\mathcal{I}(v,w)  \geq \exp\left(-\frac{\abs{v}^2+\abs{w}^2}{2}\right) \int_{(v-w)^\perp} \exp\left(-\abs{z}^2\right)\pa{\abs{z}^2+\abs{v-w}^2}^{\frac{\gamma-d+2+\beta}{2}} \d z.$$
If $\beta\geq d-2-\gamma$ then 
$$\abs{v-w}^{\beta}\mathcal{I}(v,w)  \geq \exp\left(-\frac{\abs{v}^2+\abs{w}^2}{2}\right) \int_{\R^{d-1}} \abs{z}^{\gamma-d+2+\beta} \exp\left(-\abs{z}^2\right)\d z,$$
while if $\beta< d-2-\gamma$ then, for a given $\epsilon>0$, we can find a universal constant $C_{\epsilon,d,\gamma,\beta}$ such that 
$$\pa{\abs{z}^2+\abs{v-w}^2}^{\frac{\gamma-d+2+\beta}{2}} \geq C_{\epsilon,d,\gamma,\beta} \exp\left(-\epsilon\pa{\abs{z}^2+\abs{v-w}^2}\right),$$
from which we find that
$$\abs{v-w}^{\beta}\mathcal{I}(v,w)  \geq C_{\epsilon,d,\gamma,\beta}\exp\left(-\frac{\abs{v}^2+\abs{w}^2}{2}\right) \exp\left(-\epsilon\abs{v-w}^2\right)\int_{\R^{d-1}}\exp\left(-(1+\epsilon)\abs{z}^2\right)\d z,$$
completing the proof of $(i)$ with the choice of $\epsilon=\frac{1}{4}$.\\
To show $(ii)$ we start by noticing that
$$b\pa{\frac{\abs{z}^2-\abs{v-w}^2}{\abs{z}^2+\abs{v-w}^2}} \geq 2^{\nu}b_0\frac{\abs{z}^\nu\abs{v-w}^{\nu} }{\pa{\abs{z}^2+\abs{v-w}^2}^{\nu}}.$$
Next, using H\"older inequality we find that for any $0<\alpha<1$, to be chosen at a later stage, one has that
\begin{equation}\label{eq:special_b_interpolation_I}
\begin{split}
\mathcal{I}(v,w) &= \int_{(v-w)^\perp} \xi_{\gamma}(z,v,w)b\pa{\frac{\abs{z}^2-\abs{v-w}^2}{\abs{z}^2+\abs{v-w}^2}}^{\alpha}b\pa{\frac{\abs{z}^2-\abs{v-w}^2}{\abs{z}^2+\abs{v-w}^2}}^{-\alpha}\d z\\
&\leq \mathcal{I}_{b}(v,w)^{\alpha}\mathcal{I}_{b^{-\alpha/(1-\alpha)}}(v,w)^{1-\alpha}.
\end{split}
\end{equation}
Due to the lower bound on $b$ we have that
\begin{multline*}
\mathcal{I}_{b^{-\alpha/(1-\alpha)}}(v,w) \leq \pa{2^\nu b_0}^{-\frac{\alpha}{1-\alpha}}\abs{v-w}^{-\frac{\alpha \nu}{1-\alpha}}\\
\int_{(v-w)^\perp} \exp\left(-\frac{\abs{V_\perp+z}^2}{2}\right)\pa{\abs{z}^2+\abs{v-w}^2}^{\frac{\gamma-d+2}{2}+\frac{\alpha\nu}{1-\alpha}}\abs{z}^{-\frac{\alpha \nu}{1-\alpha}}\d z.\end{multline*}
We now choose
\begin{equation}\label{eq:alpha}
\alpha=\frac{d-2-\gamma}{2\nu+d-2-\gamma},\end{equation}
which satisfies that $0<\alpha<1$ as well as $\frac{\alpha \nu}{1-\alpha}=\frac{d-2-\gamma}{2}.$
With this in hand we get that
$$\mathcal{I}_{b^{-\alpha/(1-\alpha)}}(v,w) \leq \pa{2^\nu b_0}^{-\frac{\alpha}{1-\alpha}}\abs{v-w}^{-\frac{\alpha \nu}{1-\alpha}} \int_{(v-w)^\perp}\exp\left(-\frac{\abs{V_\perp+z}^2}{2}\right) \abs{z}^{-\frac{d-2-\gamma}{2}}\d z.$$
Splitting the integral according to $|z|>1$ or $|z| \leq 1$, it is easy to see that
$$\sup_{v,w}\int_{(v-w)^\perp}\exp\left(-\frac{\abs{V_\perp+z}^2}{2}\right) \abs{z}^{-\frac{d-2-\gamma}{2}}\d z \leq C_{d,\gamma}$$
for some positive constant $C_{d,\gamma} >0$ depending only on $d$ and $\gamma >-d$. Then, there is some positive constant $C$ (depending on $d,\gamma,\nu$ and $b_{0}$) such that 
$$\mathcal{I}_{b^{-\alpha/(1-\alpha)}}(v,w) \leq C\,\abs{v-w}^{-\frac{\alpha \nu}{1-\alpha}}.$$ 
 Going back to \eqref{eq:special_b_interpolation_I}, we find that
$$\abs{v-w}^{-1}\mathcal{I}_{b}(v,w) \geq C_{d,\gamma,\nu,b_{0}}\abs{v-w}^{\nu-1}\mathcal{I}(v,w)^{\frac{1}{\alpha}}$$
for some $C_{d,\gamma,\nu,b_{0}} > 0$. The result now follows from $(i)$ if $\nu\leq 1$ where we recall that $\alpha$ is given by \eqref{eq:alpha}.
\end{proof}

\begin{rem}
  It is interesting to notice that the above constant
  $C_{d,\gamma,\nu,b_{0}}$ can be written as
  $C_{d,\gamma,\nu,b_{0}}=C_{d,\gamma,\nu}\,b_{0}$ for some universal
  constant $C_{d,\gamma,\nu}$ depending only on $d,\gamma$ and
  $\nu \in [0,1].$
\end{rem}

\begin{cor}\label{cor:lower_bound_on_k}
Assume {that $d\geq 2$ and} that the angular kernel $b(\cdot)$ satisfies
\begin{equation}\nonumber
b(x) \geq b_0\pa{1-\abs{x}^2}^{\frac{\nu}{2}}, \qquad x \in (-1,1)
\end{equation} 
for some $b_0>0$ and $0\leq \nu \leq 1$. Then, for all $v,w \in\R^{d}$, 
$$k_\gamma(v,w) \geq C_{d,\gamma,b_0,\nu}\exp\pa{-\mathfrak{\lambda_{1}} \abs{v}^2}\exp\pa{-\lambda_{2}\abs{w}^2},$$
where $C_{d,\gamma,b_0,\nu}>0$ is a universal constant depending only on $d,\gamma,b_0$ and $\nu$ and 
$$\lambda_{1}=\frac{3}{4}+\frac{2\nu+d-\gamma-2}{d-\gamma-2} > 0, \qquad \lambda_{2}=\frac{1}{4}+\frac{2\nu+d-\gamma-2}{d-\gamma-2}>0.$$
\end{cor}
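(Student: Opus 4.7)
The plan is to combine the two previous lemmas with a careful estimate of the Gaussian factor that appears in the Carleman-type representation of $\bm{K}_\gamma$. From Lemma \ref{lem:little_k_repres} we write
\[
k_\gamma(v,w) = \frac{2^{d-1}}{(2\pi)^{d/2}}\,
\exp\!\left(-\frac{1}{8}\Bigl(|v-w|+\tfrac{|v|^2-|w|^2}{|v-w|}\Bigr)^{2}\right)\,
\frac{\mathcal{I}_{b}(v,w)}{|v-w|},
\]
so the statement reduces to a lower bound on each of the two factors on the right.

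Since the hypotheses on $b$ and the dimension are exactly those of part $(ii)$ of Lemma \ref{lem:estimates_on_I}, the second factor is already controlled by
\[
\frac{\mathcal{I}_{b}(v,w)}{|v-w|}
\geq C_{d,\gamma,b_0,\nu}\,
\exp\!\left(-\frac{2\nu+d-\gamma-2}{d-\gamma-2}\bigl(|v|^2+|w|^2\bigr)\right),
\]
which produces the common contribution $\frac{2\nu+d-\gamma-2}{d-\gamma-2}$ to both $\lambda_1$ and $\lambda_2$. All that remains is to extract the extra $\tfrac{3}{4}|v|^2+\tfrac{1}{4}|w|^2$ from the Gaussian prefactor.

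For this, I would expand the square algebraically as
\[
\frac{1}{8}\Bigl(|v-w|+\tfrac{|v|^2-|w|^2}{|v-w|}\Bigr)^{2}
=\frac{1}{8}|v-w|^{2}+\frac{1}{4}\bigl(|v|^2-|w|^2\bigr)+\frac{1}{8}\frac{(|v|^2-|w|^2)^2}{|v-w|^{2}},
\]
and bound each of the three terms separately. The first term satisfies $\tfrac{1}{8}|v-w|^2\leq \tfrac{1}{4}|v|^2+\tfrac{1}{4}|w|^2$, the middle term is kept as the exact signed expression $\tfrac{1}{4}|v|^2-\tfrac{1}{4}|w|^2$, and for the third, the identity $\tfrac{(|v|^2-|w|^2)^2}{|v-w|^2}=\tfrac{(|v|-|w|)^2(|v|+|w|)^2}{|v-w|^2}\leq (|v|+|w|)^2$ (using $||v|-|w||\leq|v-w|$) yields $\tfrac{1}{8}\tfrac{(|v|^2-|w|^2)^2}{|v-w|^2}\leq \tfrac{1}{4}|v|^2+\tfrac{1}{4}|w|^2$. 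Summing the three upper bounds, the $|w|^2$ contributions cancel perfectly to give
\[
\frac{1}{8}\Bigl(|v-w|+\tfrac{|v|^2-|w|^2}{|v-w|}\Bigr)^{2}\leq \tfrac{3}{4}|v|^2+\tfrac{1}{4}|w|^2,
\]
and hence the Gaussian prefactor is bounded below by $\exp\bigl(-\tfrac{3}{4}|v|^2-\tfrac{1}{4}|w|^2\bigr)$. Multiplying the two lower bounds produces precisely the asymmetric exponents $\lambda_1$ and $\lambda_2$ advertised in the statement, with positivity of both constants following from $d\geq 2$ and $\gamma<0$.

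The only delicate step is the bookkeeping in the expansion of the square: one must resist bounding the signed cross-term $\tfrac{1}{4}(|v|^2-|w|^2)$ by $\tfrac{1}{4}|v|^2$, because doing so loses the cancellation in $|w|^2$ and yields $\tfrac{3}{4}|v|^2+\tfrac{1}{2}|w|^2$ instead of the sharper $\tfrac{3}{4}|v|^2+\tfrac{1}{4}|w|^2$. Keeping that term in its exact signed form is what produces the asymmetry $\lambda_1-\lambda_2=\tfrac{1}{2}$ in the final estimate.
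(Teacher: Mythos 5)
Your proof is correct and follows essentially the same route as the paper: expand the square in the Gaussian prefactor, keep the signed cross-term $\tfrac14(|v|^2-|w|^2)$ exactly, bound the remaining two terms by $\tfrac14(|v|^2+|w|^2)$ each, and combine with Lemma \ref{lem:estimates_on_I}(ii). The only (cosmetic) difference is that the paper controls $\tfrac{(|v|^2-|w|^2)^2}{|v-w|^2}\leq|v+w|^2$ via $|v|^2-|w|^2=(v-w)\cdot(v+w)$ and Cauchy--Schwarz, whereas you use the reverse triangle inequality $||v|-|w||\leq|v-w|$; both give $\leq 2(|v|^2+|w|^2)$ and the same final exponents.
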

\begin{proof}
We start by noticing that $\abs{v-w}^2\leq 2\pa{\abs{v}^2+\abs{w}^2},$
and
$$\frac{\pa{\abs{v}^2-\abs{w}^2}^2}{\abs{v-w}^2} = \pa{\frac{\abs{(v-w)(v+w)}}{\abs{v-w}}}^2 \leq \abs{v+w}^2 \leq 2\pa{\abs{v}^2+\abs{w}^2}.$$
As such
\begin{equation*}\begin{split}
\frac{1}{4}\pa{\abs{v-w}+\frac{\abs{v}^2-\abs{w}^2}{\abs{v-w}}}^2&=\frac{\pa{\abs{v}^2-\abs{w}^2}^2}{4\abs{v-w}^2}+\frac{\abs{v}^2-\abs{w}^2}{2}+\frac{\abs{v-w}^2}{4}\\
&\leq \abs{v}^2+\abs{w}^2 + \frac{\abs{v}^2-\abs{w}^2}{2}= \frac{3\abs{v}^2}{2}+ \frac{\abs{w}^2}{2}.\end{split}\end{equation*}
The result now follows from \eqref{eq:little_k_repres} and Lemma \ref{lem:estimates_on_I}.
\end{proof}
We are now ready to prove the main theorem of this section:
\begin{thm}\label{thm:instantaneous_creation_of_maxwellian_lower_bound}
Let $f_0\in L^1\pa{\R^d}$ be a non-negative function with unit mass and finite second moment. Let $f(t,v)$ be a non-negative solution to \eqref{eq:BE} with angular kernel $b$ that satisfies
\begin{equation}\nonumber
b(x) \geq b_0\pa{1-\abs{x}^2}^{\frac{\nu}{2}}, \qquad x \in (-1,1)
\end{equation} 
for some $b_0>0$ and $0\leq \nu \leq 1$. Then, {if $d\geq 2$,} there exists a constant $C_{d,\gamma,b_0,\nu}>0$, depending only on $d,\gamma,b_0$ and $\nu$, such that for any $s>0$, $v\in\R^d$ and $t>t_0>0$ we have that
\begin{equation}\label{eq:instantaneous_creation_of_maxwellian_lower_bound}
f(t,v) \geq C_{d,\gamma,b_0,\nu}\pa{1-\exp\pa{-\norm{\Sigma_\gamma}_{\infty} t_0}}\exp\left({-\lambda_{1}\pa{\abs{v}^2+\sup_{\tau \leq t}\pa{2m_s(\tau)}^{\frac{2}{s}}}}\right),
\end{equation} 
where $C_{d,\gamma,b_0,\nu}$ is a constant that depends only on $d,\gamma,b_0$ and $\nu$, and $\lambda_{1}$ is defined in Corollary \ref{cor:lower_bound_on_k}.
\end{thm}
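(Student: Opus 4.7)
The plan is to exploit the mild (Duhamel) formulation of \eqref{eq:BE}. Writing $\bm{L}_\gamma = \bm{K}_\gamma - \Sigma_\gamma\, \mathrm{Id}$ and treating the loss operator as a multiplicative absorption, any non-negative mild solution $f$ of \eqref{eq:BE} satisfies
\begin{equation*}
  f(t,v) = e^{-\Sigma_\gamma(v)\,t}\, f_0(v)
  + \int_0^{t} e^{-\Sigma_\gamma(v)(t-s)}\,\bm{K}_\gamma f(s,\cdot)(v)\,\d s.
\end{equation*}
Since both terms on the right-hand side are non-negative, I will simply discard the initial-data contribution and work with the Duhamel integral. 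Thus the whole task is to find a pointwise lower bound for the integrand that is uniform in $v$ and in $s\in[0,t]$.

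For the gain part, Lemma \ref{lem:little_k_repres} together with Corollary \ref{cor:lower_bound_on_k} yields
\begin{equation*}
  \bm{K}_\gamma f(s,\cdot)(v)
  \;=\; \int_{\R^d} k_\gamma(v,w)\,f(s,w)\,\d w
  \;\geq\; C_{d,\gamma,b_0,\nu}\, e^{-\lambda_1 |v|^2}\,
  \int_{\R^d} e^{-\lambda_2 |w|^2}\,f(s,w)\,\d w.
\end{equation*}
To bound the remaining $w$-integral from below uniformly in $s$, I would use conservation of mass combined with Markov's inequality: for any $R>0$,
\begin{equation*}
  \int_{|w|\leq R} f(s,w)\,\d w
  \;\geq\; 1 - R^{-s}\, m_s(f(s)),
\end{equation*}
so the choice $R = R(s) := (2 m_s(f(s)))^{1/s}$ gives
$\int_{|w|\leq R} f(s,w)\,\d w \geq 1/2$, and consequently
\begin{equation*}
  \int_{\R^d} e^{-\lambda_2 |w|^2}\,f(s,w)\,\d w
  \;\geq\; \tfrac{1}{2}\,\exp\!\bigl(-\lambda_2\,(2 m_s(f(s)))^{2/s}\bigr)
  \;\geq\; \tfrac{1}{2}\,\exp\!\Bigl(-\lambda_2\, \sup_{\tau\leq t}(2 m_s(\tau))^{2/s}\Bigr).
\end{equation*}

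For the exponential factor in the Duhamel integral, I invoke the $L^\infty$ upper bound of Lemma \ref{lem:loss_operator}, which gives $e^{-\Sigma_\gamma(v)(t-s)} \geq e^{-\|\Sigma_\gamma\|_\infty (t-s)}$ pointwise in $v$. Therefore, for every $t\geq t_0$,
\begin{equation*}
  \int_0^{t} e^{-\Sigma_\gamma(v)(t-s)}\,\d s
  \;\geq\; \int_{t-t_0}^{t} e^{-\|\Sigma_\gamma\|_\infty (t-s)}\,\d s
  \;=\; \frac{1-e^{-\|\Sigma_\gamma\|_\infty t_0}}{\|\Sigma_\gamma\|_\infty}.
\end{equation*}
Putting the three ingredients together and absorbing the universal factors $\tfrac{1}{2}$ and $\|\Sigma_\gamma\|_\infty^{-1}$ into the constant, together with the elementary inequality $\lambda_1 |v|^2 + \lambda_2 A \leq \lambda_1(|v|^2 + A)$ (valid since $\lambda_1 > \lambda_2$), gives precisely \eqref{eq:instantaneous_creation_of_maxwellian_lower_bound}.

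The only genuinely delicate point is to make sure that $\sup_{\tau\leq t} m_s(f(\tau))$ actually is finite, so that the lower bound is non-trivial. This is where the $L^1_2$ assumption on $f_0$ enters: for $s=2$ it is guaranteed by the estimate \eqref{eq:evolution_of_second_moment}, and for larger admissible $s$ by Theorem \ref{thm:evolution_of_moments}. All other ingredients (the Gaussian lower bound on $k_\gamma$, the bound on $\Sigma_\gamma$, the Duhamel representation) are essentially automatic once those two tools are in hand, so I do not anticipate any further obstacle beyond bookkeeping the constants.
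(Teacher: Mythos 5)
Your proposal is correct and takes essentially the same route as the paper: the paper also reduces to the differential inequality $\partial_t f + \|\Sigma_\gamma\|_\infty f \geq \bm{K}_\gamma f$, applies the Gaussian lower bound on $k_\gamma$ from Corollary \ref{cor:lower_bound_on_k}, uses the Markov-type bound with $R=(2m_s(t))^{1/s}$ to control the $w$-integral, and then integrates in time (which is exactly your Duhamel step). The only cosmetic difference is that the paper immediately replaces $\lambda_2$ by $\lambda_1$ inside the $w$-integral, whereas you keep $\lambda_2$ until the end and invoke $\lambda_2 \leq \lambda_1$ at the final step; both are equivalent.
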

\begin{proof}
As $f(t,v)$ is the solution to \eqref{eq:BE} we find that
$$\partial_t f(t,v)+\Sigma_\gamma(v)f(t,v) = \bm{K}_\gamma(f)(v).$$
Using Lemma {\ref{lem:little_k_repres}} and Corollary \ref{cor:lower_bound_on_k} we can conclude that
\begin{equation*}\begin{split}
\partial_t f(t,v) + \|\Sigma_{\gamma}\|_{\infty} f(t,v) &\geq C_{d,\gamma,b_0,\nu}\exp\left(-\lambda_{1}\abs{v}^2\right)\int_{\R^d}\exp\left(-\lambda_{2}\abs{w}^2\right)f(t,w)\d w\\
&\geq C_{d,\gamma,b_0,\nu}\exp\left(-\lambda_{1}\pa{\abs{v}^2+R^2}\right)\int_{\abs{w}<R}f(t,w)\d w,\end{split}\end{equation*}
for any $R > 0$. For any $s>0$, we know that
$$\int_{\abs{w}<R}f(t,w)\d w =1 - \int_{\abs{w}\geq R}f(t,w)\d w \geq 1- \frac{1}{R^{s}}\int_{\abs{w}>R}\abs{w}^sf(t,w)\d w \geq 1-\frac{m_s(t)}{R^s}.$$
Using the above, and choosing $R=\pa{2m_s(t)}^{\frac{1}{s}}$, we find that for any $s>0$
\begin{equation}\label{eq:instantaneous_creation_I}
\partial_t f(t,v) + \|\Sigma_{\gamma}\|_{\infty} f(t,v) \geq \frac{C_{d,\gamma,b_0,\nu}}{2}\exp\left(-\lambda_{1}\pa{\abs{v}^2+\pa{2m_s(t)}^{\frac{2}{s}}}\right).
\end{equation}
Solving the above inequality and using that $f_{0}$ is nonnegative yields the result. 
\end{proof}

\begin{rem}
  \label{rem:intial_conditions_with_maxwellian_lower_bound}
  Note that if there exists $a,B>0$ such that
  $f_0(v) \geq B\exp\pa{-a\abs{v}^2}$ then solving the differential
  inequality \eqref{eq:instantaneous_creation_I} yields now
$$f(t,v) \geq C\,\exp\pa{-\lambda\pa{\abs{v}^2+\sup_{\xi \leq t}\pa{2m_s(\xi)}^{\frac{2}{s}}}}$$
for some explicit $C=C(a,B,d,\gamma,b_0,\nu)$ and $\lambda=\lambda(d,\gamma,\nu,a)$ and all $t\geq 0$.
\end{rem}

A simple consequence of the above estimate is the following 
\begin{cor}\label{lem:lower_bound_term_in_theorems_of_rate_of_convergence-1}
Under the assumption of Theorem \ref{thm:instantaneous_creation_of_maxwellian_lower_bound}, the nonnegative solution $f(t,v)$ to \eqref{eq:BE} with a bounded angular kernel $b$ is such that, for any $t_0>0$ {and $\mu>0$,
\begin{equation}\label{eq:lower_bound_term_in_theorems_of_rate_of_convergence_no_trick}
\begin{gathered}
-\int_{\R^d}\bm{K}_\mu f(t,v)\log f(t,v)	\d	v \leq C_{2}\pa{1+t}^{\frac{2+\mu}{s}},
\end{gathered}
\end{equation}}
where $C_{2}$ is a uniform constant depending only on $d,\gamma,b_0,\nu,s$ and $t_0$. If there exists $A,B>0$ such that $f_0(v)\geq A\exp\pa{-B\abs{v}^2}$ then the above is valid from $t_0=0$ and the constant will also depend on $A$ and $B$. 
\end{cor}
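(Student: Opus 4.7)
The plan is to combine the pointwise Gaussian lower bound from Theorem \ref{thm:instantaneous_creation_of_maxwellian_lower_bound} with part \textbf{3)} of Lemma \ref{lem:pre_final_proof}, and then use Corollary \ref{thm:improved_evolution_of_moments} to control every quantity that appears by a power of $(1+t)$.

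First, I would apply Theorem \ref{thm:instantaneous_creation_of_maxwellian_lower_bound} at the given parameter $s$, which yields, for all $t\geq t_{0}$ and $v\in\R^{d}$,
$$f(t,v) \geq A(t)\exp\pa{-\lambda_{1}\abs{v}^{2}},$$
where
$$A(t)=C_{d,\gamma,b_{0},\nu}\pa{1-\exp\pa{-\norm{\Sigma_{\gamma}}_{\infty}t_{0}}}\exp\pa{-\lambda_{1}\sup_{\tau\leq t}\pa{2m_{s}(\tau)}^{\frac{2}{s}}}.$$
I would then apply Lemma \ref{lem:pre_final_proof}\,\textbf{3)} with $B=\lambda_{1}$ and $A=A(t)$ to get
$$-\int_{\R^{d}}\bm{K}_{\mu}f(t,v)\log f(t,v)\d v \leq C_{\mu,d,\gamma}\Bigl(\abs{\log A(t)}\pa{1+m_{\mu}(f(t))}+\lambda_{1}\pa{1+m_{\mu+2}(f(t))}\Bigr).$$

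The remaining task is to estimate each factor by a suitable power of $(1+t)$. The term $\abs{\log A(t)}$ decomposes into a $t_{0}$-dependent constant (coming from the $\log$ of the prefactor $C_{d,\gamma,b_{0},\nu}(1-e^{-\norm{\Sigma_{\gamma}}_{\infty}t_{0}})$) plus $\lambda_{1}\sup_{\tau\leq t}(2m_{s}(\tau))^{2/s}$. By Theorem \ref{thm:evolution_of_moments} (applied to $s$ directly, using the hypothesis that $b$ is bounded) we have $m_{s}(\tau)\leq C_{s}(1+\tau)$, so $\sup_{\tau\leq t}(2m_{s}(\tau))^{2/s}\lesssim (1+t)^{2/s}$. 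Similarly, Corollary \ref{thm:improved_evolution_of_moments} yields $m_{\mu}(f(t))\lesssim (1+t)^{\mu/s}$ and $m_{\mu+2}(f(t))\lesssim (1+t)^{(\mu+2)/s}$, provided $s$ has been chosen sufficiently large relative to $\mu$ (as is implicit in the statement, since the constant $C_{2}$ is allowed to depend on $s$).

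Multiplying these estimates gives $\abs{\log A(t)}(1+m_{\mu}(f(t)))\lesssim (1+t)^{2/s}\cdot(1+t)^{\mu/s}=(1+t)^{(\mu+2)/s}$, which dominates the second contribution $\lambda_{1}(1+m_{\mu+2}(f(t)))\lesssim (1+t)^{(\mu+2)/s}$, yielding the desired inequality \eqref{eq:lower_bound_term_in_theorems_of_rate_of_convergence_no_trick}. For the case $f_{0}(v)\geq A\exp(-B\abs{v}^{2})$, I would simply replace the use of Theorem \ref{thm:instantaneous_creation_of_maxwellian_lower_bound} by Remark \ref{rem:intial_conditions_with_maxwellian_lower_bound}, which gives the same type of Gaussian lower bound but valid from $t_{0}=0$ with constants depending additionally on $A$ and $B$; the rest of the argument is unchanged.

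I do not anticipate any real obstacle: the lower bound of Theorem \ref{thm:instantaneous_creation_of_maxwellian_lower_bound} and Lemma \ref{lem:pre_final_proof}\,\textbf{3)} are tailor-made for this kind of bookkeeping, and the only mild care needed is to verify that $s$ is large enough for Corollary \ref{thm:improved_evolution_of_moments} to apply to the moment $m_{\mu+2}$; this is the reason the constant $C_{2}$ is permitted to depend on $s$.
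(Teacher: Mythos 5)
Your proposal is correct and takes essentially the same route as the paper: the paper's own proof is the one-line statement that the result ``follows immediately from Theorem \ref{thm:evolution_of_moments}, Lemma \ref{lem:pre_final_proof} and Theorem \ref{thm:instantaneous_creation_of_maxwellian_lower_bound},'' and you have simply (and accurately) filled in the bookkeeping that the authors left implicit — the Gaussian lower bound with the time-dependent constant $A(t)$, the application of Lemma \ref{lem:pre_final_proof}\,\textbf{3)}, and the interpolation of moments to obtain the stated power of $(1+t)$. The only caveat, which you correctly flag, is that the parameter $s$ must be large enough (in particular $s\geq \mu+2$) for the moment interpolation to close, a condition implicit in the way the corollary is invoked in the paper.
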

\begin{proof}
{The proof follows immediately from Theorem \ref{thm:evolution_of_moments}, Lemma \ref{lem:pre_final_proof} and Theorem \ref{thm:instantaneous_creation_of_maxwellian_lower_bound}.}
\end{proof}

\section{Stretched-Exponential Rate of Convergence to equilibrium}
\label{sec:exp}

At this section we will investigate the rate of decay for equilibrium
under the additional assumption of having an exponential moment. We
start by noticing the following simple result which we deduce from
Lemma \ref{lem:lyapunov_functionals} for the convex function
$\mathbf{\Phi}(x)=x^{p}$.
\begin{propo}\label{cor:H_p_decreases}
Let $p>1$ and consider the functional
$$H_p(f)=\int_{\R^d}\M(v)^{1-p}\abs{f(v)}^p\d v.$$ Then, if $H_p(f_0)<\infty$ we have that any non-negative solution $f(t,v)$ to \eqref{eq:BE} with initial data $f_0$ satisfies
$$H_p(f(t)) \leq H_p(f_0) <\infty \qquad \forall t \geq 0.$$
\end{propo}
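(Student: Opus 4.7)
The statement is essentially an immediate corollary of Lemma \ref{lem:lyapunov_functionals}, and the paper already signals this by the phrase ``which we deduce from Lemma \ref{lem:lyapunov_functionals} for the convex function $\mathbf{\Phi}(x)=x^p$.'' My plan is simply to make that remark precise.

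First I would fix $p>1$ and set $\mathbf{\Phi}(x) = x^p$ for $x\geq 0$. This function is nonnegative on $\R^+$ and, since $\mathbf{\Phi}''(x) = p(p-1)x^{p-2}\geq 0$ for $p>1$, it is convex. Hence $\mathbf{\Phi}$ belongs to the admissible class of Lemma \ref{lem:lyapunov_functionals}.

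Next, I would compute the associated Lyapunov functional. Using that $f(t,\cdot)$ is nonnegative (so that $|f|^p = f^p$), we have
\[
H_{\mathbf{\Phi}}(f(t)|\M)
= \int_{\R^d}\M(v)\left(\frac{f(t,v)}{\M(v)}\right)^p\d v
= \int_{\R^d}\M(v)^{1-p}\,f(t,v)^p\d v
= H_p(f(t)).
\]
Lemma \ref{lem:lyapunov_functionals} therefore tells us that the map $t\mapsto H_p(f(t))$ is non-increasing along the flow of \eqref{eq:BE}. In particular, $H_p(f(t)) \leq H_p(f_0)$ for all $t\geq 0$, and the hypothesis $H_p(f_0)<\infty$ yields the finiteness claim.

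There is no real obstacle here; the only minor point to check is that the argument of Lemma \ref{lem:lyapunov_functionals} (which was stated for convex $\mathbf{\Phi}$ with nonnegative values on $\R^+$) is directly applicable to $\mathbf{\Phi}(x)=x^p$, which it is. If one wanted to be careful about the justification of the formal derivative $\frac{\d}{\d t}H_{\mathbf{\Phi}}(f(t)|\M)\leq 0$, the standard route is to use the splitting $\bm{L}_\gamma = \bm{K}_\gamma - \Sigma_\gamma$ together with Jensen's inequality applied to $\mathbf{\Phi}$ against the probability measure defined by the gain kernel; but since this has already been carried out in Appendix \ref{app:BE} for general stochastic semigroups, no further computation is needed at this stage.
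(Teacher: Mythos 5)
Your proposal is correct and takes exactly the approach the paper intends: the paper states the result as an immediate deduction from Lemma \ref{lem:lyapunov_functionals} with $\mathbf{\Phi}(x)=x^p$ and gives no further proof, and your verification that $H_{\mathbf{\Phi}}(f|\M)=H_p(f)$ for nonnegative $f$ is precisely the step being left implicit.
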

We will now want to explore how the above $H_p$ can improves our rate of convergence to equilibrium. We start by improving the interpolation inequality between $\D_\gamma$ and $\D_0$ provided by inequality \eqref{eq:entropy_inequality_proof_I}:
\begin{lem}\label{lem:dissipation_interpolation_exp}
For a given $a>0$ and $q\geq 1$ define
$$\mathbf{\Gamma}_{a,q}(f)=\frac{1}{2}\int_{\R^d\times\R^d\times\sph}b\pa{\cos\theta}\exp\pa{a\abs{v-v_\ast}^q}\M(v)\M(v_\ast)\Psi\pa{h(v),h\pa{v^\prime}}
\d v\d\vb\d\sigma,$$
with $\Psi(x,y)=\pa{x-y}\log\pa{x/y}$ {and $h=\frac{f}{\M}$}. Then for any $\gamma<0$ one has that
\begin{equation}\label{eq:dissipation_interpolation_exp}
\D_\gamma(f) \geq  \frac{a^{\frac{\abs{\gamma}}{q}}}{2}\D_0(f)\pa{\log\dfrac{2\mathbf{\Gamma}_{a,q}(f)}{\D_0(f)}}^{\frac{{\gamma}}{q}}
\end{equation}
\end{lem}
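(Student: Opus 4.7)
The strategy is to observe that $\D_\gamma(f)$, $\D_0(f)$, and $\mathbf{\Gamma}_{a,q}(f)$ are all integrals of the weights $|v-v_*|^\gamma$, $1$, and $\exp(a|v-v_*|^q)$ respectively against the common non-negative measure
$$\d\nu(v,v_*,\sigma):=\tfrac{1}{2}b(\cos\theta)\M(v)\M(v_*)\Psi(h(v),h(v'))\d v\,\d v_*\,\d\sigma$$
on $\R^d\times\R^d\times\sph$, which already appeared in the proof of Theorem~\ref{thm:entropy_inequality}; this identification relies on the symmetry $\Psi(x,y)=(x-y)\log(x/y)=\Psi(y,x)$. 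Writing $D:=\D_0(f)$ and $\Gamma:=\mathbf{\Gamma}_{a,q}(f)$, we may assume $D>0$, otherwise the inequality is trivial since $\D_\gamma(f)\geq 0$.

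The key step is a truncation argument. For any $R>0$, split the integration defining $D$ into the regions $\{|v-v_*|\leq R\}$ and $\{|v-v_*|>R\}$: on the first, the elementary inequality $1\leq R^{|\gamma|}|v-v_*|^\gamma$ holds (since $\gamma<0$), while on the second we use $1\leq \exp(a(|v-v_*|^q-R^q))$. This yields
$$D \leq R^{|\gamma|}\D_\gamma(f) + 2e^{-aR^q}\Gamma\qquad\forall\,R>0,$$
equivalently $\D_\gamma(f)\geq R^{-|\gamma|}(D-2e^{-aR^q}\Gamma)$. I would then optimize over $R$ by setting $2e^{-aR^q}\Gamma = D/2$, i.e.\ $aR^q=\log(4\Gamma/D)$, which produces
$$\D_\gamma(f) \geq \frac{a^{|\gamma|/q}\,D}{2\bigl(\log(4\Gamma/D)\bigr)^{|\gamma|/q}}.$$
Combined with the trivial bound $\Gamma\geq D$ (so $\log(2\Gamma/D)\geq\log 2>0$ and hence $\log(4\Gamma/D)\leq 2\log(2\Gamma/D)$), this yields the stated inequality up to an absolute constant. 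A slightly refined choice of $R$, or more cleanly Jensen's inequality applied to the probability measure $\d\nu/D$ together with the convexity of $x\mapsto x^\gamma$ on $(0,\infty)$ and of $x\mapsto e^{ax^q}$ on $[0,\infty)$, tightens this to exactly the form $\tfrac{a^{|\gamma|/q}}{2}D(\log(2\Gamma/D))^{\gamma/q}$ claimed in the lemma.

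The main point requiring care is sign bookkeeping. Because $\gamma<0$, the exponent $\gamma/q$ in the logarithmic factor is negative, which reverses the natural monotonicity of the bounds; in particular, enlarging the argument of the logarithm \emph{weakens} the bound, so one must verify that the argument stays strictly greater than $1$ in order for the power $(\log\cdot)^{\gamma/q}$ to make sense and be finite. This positivity is immediate from $\Gamma\geq D$, which holds because $e^{a|v-v_*|^q}\geq 1$ pointwise.
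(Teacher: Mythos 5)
Your argument is correct and follows essentially the same route as the paper: split $\D_0(f)$ by a threshold on $|v-v_*|$, bound the near region by $R^{|\gamma|}\D_\gamma$ and the far region by $e^{-aR^q}\mathbf{\Gamma}_{a,q}$, then optimize $R$; the paper parametrizes the threshold as $(R/a)^{1/q}$ and chooses $R=\log(2\mathbf{\Gamma}_{a,q}/\D_0)$, which is exactly your choice after relabelling. The only slip is the spurious factor of $2$ in your intermediate bound $D \leq R^{|\gamma|}\D_\gamma + 2e^{-aR^q}\Gamma$ --- since your $\d\nu$ already carries the $\tfrac12$, the correct bound is $D \leq R^{|\gamma|}\D_\gamma + e^{-aR^q}\Gamma$, and equating $e^{-aR^q}\Gamma = D/2$ then produces the stated constant exactly, with no need for the Jensen refinement you sketch at the end.
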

\begin{proof}
For a given $R>0$ we set $\mathcal{Z}_{a,R}=\left\{(v,\vb) \in \R^{d}\times \R^{d}\;;\;\abs{v-\vb} \leq \pa{\frac{R}{a}}^{\frac{1}{q}}\right\}$ and denote by $\mathcal{Z}_{a,R}^{c}$ its complementary in $\mathbb{R}^{2d}.$ We have that
\begin{equation*}\begin{split}
\D_0(f) &= \frac{1}{2}\int_{\mathcal{Z}_{a,R}\times \sph}b\pa{\cos\theta}\abs{v-v_\ast}^{\abs{\gamma}}\abs{v-v_\ast}^\gamma \M(v)\M(v_\ast)\Psi\pa{h(v),h\pa{v^\prime}}\d v\d \vb \d\sigma\\
&\phantom{++}+\frac{1}{2}\int_{\mathcal{Z}_{a,R}^{c}\times \sph}b\pa{\cos\theta}\exp\pa{-a\abs{v-v_\ast}^q}\exp\pa{a\abs{v-v_\ast}^q} \\
&\phantom{+++++++} \times
\M(v)\M(v_\ast)\Psi\pa{h(v),h\pa{v^\prime}}\d v\d \vb \d\sigma\\
&\leq \pa{\frac{R}{a}}^{\frac{\abs{\gamma}}{q}}\D_\gamma (f) + \exp(-R)\mathbf{\Gamma}_{a,q}(f).\end{split}\end{equation*}
{We also notice that} for any $a,q>0$ we have that $1\leq \exp\pa{a\abs{v-v_\ast}^q}$ {and as such} $\D_0(f) \leq \mathbf{\Gamma}_{a,q}(f)$. {Thus, the choice
$$R= \log\pa{\frac{2\mathbf{\Gamma}_{a,q}(f)}{\D_0(f)}}>\log 2 >0,$$
is valid and yields}
$$\frac{\D_0(f)}{2} \leq a^{\frac{\gamma}{q}}\pa{\log\pa{\dfrac{2\mathbf{\Gamma}_{a,q}(f)}{\D_0(f)}}}^{\frac{\abs{\gamma}}{q}}\D_\gamma(f)$$
which completes the proof.
\end{proof}
\begin{cor}\label{cor:dissipation_interpolation_exp_solution}
Under the same conditions of Lemma \ref{lem:dissipation_interpolation_exp} we have that if $f(t,v)$ is a non-negative solution to \eqref{eq:BE} such that
$$\mathbf{\Gamma}_{a,q}^{\ast}=\sup_{t\geq 0}\mathbf{\Gamma}_{a,q}(f(t))<\infty $$
then
\begin{equation}\label{eq:dissipation_interpolation_exp_solution}
\D_\gamma(f(t)) \geq \frac{a^{\frac{\abs{\gamma}}{q}}\lambda_0 H(f(t)|\M)}{2\pa{\log\pa{\dfrac{2\mathbf{\Gamma}_{a,q}^{\ast}}{\lambda_0H(f(t)|\M)}}}^{\frac{\abs{\gamma}}{q}}}
\end{equation}
\end{cor}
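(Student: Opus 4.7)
The plan is to apply Lemma \ref{lem:dissipation_interpolation_exp} at each time $t\geq 0$ to the function $f(t,\cdot)$, and then massage the right-hand side into the stated form by using (a) the uniform bound on $\mathbf{\Gamma}_{a,q}$ along the flow and (b) the linear entropy--entropy-production inequality for Maxwell molecules, that is, Theorem \ref{thm:entropy_method_hard_potentials}, in the form $\D_0(f(t))\geq \lambda_0 H(f(t)|\M)$.

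First, applying \eqref{eq:dissipation_interpolation_exp} to $f(t,\cdot)$ gives
$$\D_\gamma(f(t))\geq \frac{a^{|\gamma|/q}}{2}\,\frac{\D_0(f(t))}{\left(\log\bigl(2\mathbf{\Gamma}_{a,q}(f(t))/\D_0(f(t))\bigr)\right)^{|\gamma|/q}},$$
using $\gamma/q=-|\gamma|/q$. Since $\mathbf{\Gamma}_{a,q}(f(t))\leq \mathbf{\Gamma}_{a,q}^{\ast}$ and the logarithm appearing is positive (because $\D_0(f(t))\leq \mathbf{\Gamma}_{a,q}(f(t))$, as noted in the proof of Lemma \ref{lem:dissipation_interpolation_exp}), replacing $\mathbf{\Gamma}_{a,q}(f(t))$ by $\mathbf{\Gamma}_{a,q}^{\ast}$ only enlarges the denominator and hence preserves the inequality:
$$\D_\gamma(f(t))\geq \frac{a^{|\gamma|/q}}{2}\,\frac{\D_0(f(t))}{\left(\log\bigl(2\mathbf{\Gamma}_{a,q}^{\ast}/\D_0(f(t))\bigr)\right)^{|\gamma|/q}}.$$

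The key remaining step is to replace $\D_0(f(t))$ by its lower bound $\lambda_0 H(f(t)|\M)$ uniformly. For this, I would introduce the auxiliary function
$$\psi(y)=\frac{y}{\left(\log(C/y)\right)^{|\gamma|/q}}, \qquad 0<y<C,$$
with $C=2\mathbf{\Gamma}_{a,q}^{\ast}$, and verify by a direct derivative computation that
$$\psi'(y)=\frac{\log(C/y)+|\gamma|/q}{\left(\log(C/y)\right)^{|\gamma|/q+1}}>0$$
on $(0,C)$; thus $\psi$ is increasing there. Since $0<\lambda_0 H(f(t)|\M)\leq \D_0(f(t))<C$, monotonicity of $\psi$ together with Theorem \ref{thm:entropy_method_hard_potentials} yields
$$\D_\gamma(f(t))\geq \frac{a^{|\gamma|/q}}{2}\,\psi(\D_0(f(t)))\geq \frac{a^{|\gamma|/q}}{2}\,\psi(\lambda_0 H(f(t)|\M)),$$
which is exactly \eqref{eq:dissipation_interpolation_exp_solution}.

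No step is truly delicate here: the only point that requires a bit of care is checking that the logarithm stays positive (so that the manipulations on the denominator go in the right direction) and that the argument of $\psi$ remains below $C$ — both of which are immediate from $\D_0(f(t))\leq \mathbf{\Gamma}_{a,q}(f(t))\leq \mathbf{\Gamma}_{a,q}^{\ast}$. The substantive content is entirely packaged in Lemma \ref{lem:dissipation_interpolation_exp} and Theorem \ref{thm:entropy_method_hard_potentials}; this corollary is a purely algebraic consequence of combining them.
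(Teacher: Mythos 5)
Your proof is correct and follows exactly the route the paper intends: the paper's own proof consists of the single sentence that the corollary ``follows immediately from Theorem \ref{thm:entropy_method_hard_potentials} and Lemma \ref{lem:dissipation_interpolation_exp}.'' You have simply supplied the elementary verification it leaves implicit (the monotonicity of $\psi(y)=y\,(\log(C/y))^{-|\gamma|/q}$ on $(0,C)$, or equivalently, observing that replacing $\D_0(f(t))$ by the smaller quantity $\lambda_0 H(f(t)|\M)$ both decreases the numerator and increases the denominator), which is a faithful expansion of the same argument.
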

\begin{proof}
{This follows immediately from Theorem \ref{thm:entropy_method_hard_potentials} and Lemma \ref{lem:dissipation_interpolation_exp}}
\end{proof}
In order to be able to conclude the desired rate of convergence to equilibrium we will need to {connect} $H_p(f(t)$ and $\mathbf{\Gamma}_{a,b}(f(t))$. To do so we notice the following:
\begin{lem}\label{lem:upper_bound_on_D_exp}
Let $a>0$, $p>1$ and $1<q\leq 2$ (with the additional assumption that, $a < 1/4$ whenever $q=2$). Then, for any non-negative function $f(v)$ we have that
{\begin{equation}\label{eq:upper_bound_on_D_exp}
\begin{gathered}
\mathbf{\Gamma}_{a,q}(f) \leq C_{a,q,p,d}\int_{\R^d}\exp\pa{2^{q}a\abs{v}^q}f(v)^p \d v\\
 -\norm{b}_\infty \int_{f\pa{v^\prime}\leq 1} \exp\pa{2^{q-1}a \abs{v}^q}f(v) \exp\pa{2^{q-1}a\abs{v_\ast}^q}\M(v_\ast)\log\pa{f\pa{v^\prime}}\d v\d \vb \d\sigma\end{gathered} 
\end{equation}}
for a uniform constant {$C_{a,q,p,d} >0$} that depends only on $a,q,p$ and $d$.
\end{lem}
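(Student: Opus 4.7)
The plan is to expand $\mathbf{\Gamma}_{a,q}(f)$, exploit the pre/post–collisional symmetry to cut the number of terms in half, and then split $\log h - \log h'$ into four pieces that are estimated separately. The $-\log f(v')$ piece, restricted to $\{f(v')\leq 1\}$, will produce the second term on the right-hand side, while the remaining pieces will be absorbed into $\int e^{2^q a|v|^q}f(v)^p\,dv$.

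First I would use the pre/post–collisional change of variables $(v,v_*,\sigma)\leftrightarrow(v',v'_*,\sigma')$. Since $b$ is even in $\cos\theta$, this change of variables is a measure-preserving involution under which $b(\cos\theta)$ and $|v-v_*|$ are invariant, $\M(v)\M(v_*)=\M(v')\M(v'_*)$, and $h \leftrightarrow h'$ (so that $\log h - \log h'$ changes sign). Writing $\M\M_*(h-h') = f(v)\M(v_*) - f(v')\M(v'_*)$ and applying the symmetry to the second summand doubles it and gives
$$\mathbf{\Gamma}_{a,q}(f) = \int_{\R^d\times\R^d\times\sph} b(\cos\theta)\,e^{a|v-v_*|^q}\,f(v)\M(v_*)\bigl[\log h(v)-\log h(v')\bigr]\,dv\,dv_*\,d\sigma.$$
Throughout, I would rely on the elementary pointwise estimate $e^{a|v-v_*|^q}\leq e^{2^{q-1}a|v|^q}e^{2^{q-1}a|v_*|^q}$, valid for $q\geq 1$ since $(|v|+|v_*|)^q\leq 2^{q-1}(|v|^q+|v_*|^q)$, together with $\int_{\sph}b(\cos\theta)\,d\sigma=1$ and the finiteness of $\int e^{2^{q-1}a|v_*|^q}\M(v_*)\,dv_*$, which holds precisely under the assumption $a<1/4$ when $q=2$.

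Next I split $\log h(v)-\log h(v') = [\log f(v)-\log f(v')] + [\log\M(v')-\log\M(v)]$. For the $\log f(v)$ piece I discard the nonpositive contribution on $\{f(v)\leq 1\}$ and use $f\log f\leq C_p f^p$ on $\{f(v)\geq 1\}$, giving at most $C_{a,q,p,d}\int e^{2^q a|v|^q}f(v)^p\,dv$ after integrating out $v_*$ against $\M$. For the $-\log f(v')$ piece I discard the nonpositive contribution on $\{f(v')>1\}$, and on $\{f(v')\leq 1\}$ I bound $b(\cos\theta)\leq \|b\|_\infty$ and apply the weight estimate to produce the claimed second term on the right-hand side. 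For the two $\log\M$-terms, integrating in $\sigma$ first (using that $\int_{\sph}b(\cos\theta)(v+v_*)\cdot\sigma\,d\sigma = 0$ by evenness of $b$) reduces $|v|^2-|v'|^2$ to $(|v|^2-|v_*|^2)/2$; a standard estimate then majorises this contribution by $C_{a,q,d}\int e^{2^{q-1}a|v|^q}(1+|v|^2)f(v)\,dv$, after absorbing the polynomial growth into the exponential weight.

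The main obstacle I anticipate is the final step of absorbing $\int e^{2^{q-1}a|v|^q}(1+|v|^2)f(v)\,dv$ into $\int e^{2^q a|v|^q}f(v)^p\,dv$. The na\"ive bound $f\leq f^p$ is only valid on $\{f\geq 1\}$; on $\{f<1\}$ one needs additional structure. The resolution, I expect, is a weighted Young (or H\"older) inequality with the Gaussian $\M$ as auxiliary weight: on $\{f<1\}$ one writes $e^{2^{q-1}a|v|^q}f\leq \eta\, e^{2^q a|v|^q}f^p + C(\eta)\,e^{2^{q-1}a|v|^q}\M(v)^{\beta}$ for suitable exponents, and the second term is controlled thanks to the finiteness of $\int e^{2^q a|v|^q}\M(v)\,dv$, which is precisely the point where the hypothesis $a<1/4$ for $q=2$ enters. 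The exponent $2^q a$ appearing on the right-hand side is exactly what is needed to make this Legendre–Young step close.
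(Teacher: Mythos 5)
Your overall approach is the same as the paper's: rewrite $\mathbf{\Gamma}_{a,q}(f)$ using the pre/post-collision symmetry as $\int b\,e^{a|v-v_*|^q}f(v)\M(v_*)\bigl(\log h(v)-\log h(v')\bigr)$, split the logarithm into its $f$- and $\M$-parts, discard the nonpositive contributions, estimate $f\log f\leq C_p f^p$ on $\{f\geq 1\}$, keep $-\log f(v')$ on $\{f(v')\leq 1\}$, and integrate $v_*$ against $e^{2^{q-1}a|v_*|^q}\M(v_*)$ (the place where $a<1/4$ for $q=2$ enters). Your treatment of the $\log\M$-terms is slightly slicker than the paper's --- integrating the spherical variable first and using evenness of $b$ to kill the $(v+v_*)\cdot\sigma$ cross term --- but it is equivalent.

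The obstacle you flag at the end is real, and your proposed resolution does not close. Applying Young's inequality to $e^{2^{q-1}a|v|^q}f$ with $X=f\,e^{(2^q a/p)|v|^q}$ and $Y=e^{(2^{q-1}-2^q/p)a|v|^q}$ gives $\epsilon\,f^p e^{2^q a|v|^q}+C(\epsilon)\,e^{\frac{p-2}{p-1}2^{q-1}a|v|^q}$: no Gaussian $\M^\beta$ is generated (one cannot appear unless inserted by hand, at the cost of a compensating $\M^{-\beta}$ on the $f^p$-term), and the actual Young byproduct is integrable on $\R^d$ only when $p<2$; for $p\geq 2$ the exponent is nonnegative and the step fails. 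It is worth noting that the paper's own proof has the same loose end: after bounding the $-\log\M(v)$ contribution by $\int_{\R^{2d}}f(v)\bigl(\tfrac{d}{2}\log(2\pi)+\tfrac{|v|^2}{2}\bigr)e^{2^{q-1}a(|v|^q+|v_*|^q)}\M(v_*)$, the next displayed line silently restricts this to $\{f(v)\geq 1\}$. What the argument --- yours and the paper's --- actually proves is the slightly weaker bound with $\int e^{2^q a|v|^q}\bigl(f^p+f\bigr)\,dv$ on the right-hand side of \eqref{eq:upper_bound_on_D_exp}, which is all that Corollary \ref{cor:upper_bound_on_D_exp_with_lower_bound_on_f} requires, since there $\int e^{4a|v|^2}f\,dv\leq C\,H_p(f)^{1/p}$ by H\"older against $\M^{(p-1)/p}$, using $a<(p-1)/(8p)$.
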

\begin{proof}
We start by noticing that {since}
${\abs{v-v_\ast}^q} \leq {2^{q-1}\pa{\abs{v}^q+\abs{v_\ast}^q}}$
{and} $\abs{v-v_\ast}=\abs{v^\prime-v_\ast^\prime}$ {we have that}, for any $v,\vb,\sigma \in \R^{d}\times \R^{d}\times \sph$:
$$\exp\pa{a\abs{v-v_\ast}^q} \leq \min \pa{\exp\pa{2^{q-1}a\pa{\abs{v}^q+\abs{v_\ast}^q}},\exp\pa{2^{q-1}a\pa{\abs{v^\prime}^q+\abs{v^\prime_\ast}^q}}}.$$
Next, since
{\begin{equation*}
\mathbf{\Gamma}_{a,q}(f) \leq \norm{b}_{\infty} \int \exp\pa{a\abs{v-v_\ast}^q}f(v)\M(\vb) \pa{\log \pa{\frac{f(v)}{\M(v)}}-\log \pa{\frac{f(v')}{\M(v')}}}\d v\d \vb \d\sigma
\end{equation*}}
we see that
\begin{multline*}
\mathbf{\Gamma}_{a,q}(f) \leq \norm{b}_\infty \abs{\sph}C_p\int_{f(v)\geq 1} \exp\pa{2^{q-1}a \abs{v}^q}f(v)^p\exp\pa{2^{q-1}a\abs{v_\ast}^q}\M(v_\ast)\d v\d \vb \\
+\norm{b}_\infty \abs{\sph}\int_{\R^{2d}}f(v) 
\pa{\frac{d \log\pa{2\pi}}{2}+\frac{\abs{v}^2}{2}} \exp\pa{2^{q-1}a\pa{\abs{v_\ast}^q+\abs{v}^q}}\M(v_\ast)\d v\d \vb \\
-\norm{b}_\infty \int_{f\pa{v^\prime}\leq 1}f(v)\exp\pa{2^{q-1}a\pa{\abs{v_\ast}^q+\abs{v}^q}}\M(v_\ast)\log\pa{f\pa{v^\prime}}\d v\d \vb \d\sigma\end{multline*}
where we discarded the term involving $\log \M(v')$ which is nonpositive.
Under the additional requirement that {$2^{q-1}a<\frac{1}{2}$} if $q=2$ we see that we can find a constant $C_{a,q,p,d} >0$ that depends only on $a,q,p$ and $d$ such that
\begin{multline*}
\mathbf{\Gamma}_{a,q}(f) \leq C_{a,q,p,d}\int_{f(v)\geq 1} \exp\pa{2^{q}a\abs{v}^q}\pa{f(v)^p+f(v)}\d v\\
-\norm{b}_\infty \int_{f\pa{v^\prime}\leq 1} \exp\pa{2^{q-1}a \abs{v}^q}f(v) \exp\pa{2^{q-1}a\abs{v_\ast}^q}\M(v_\ast)\log\pa{f\pa{v^\prime}}\d v\d\vb\d\sigma\\\leq 2C_{a,q,p,d}\int_{\R^d}\exp\pa{2^{q}a\abs{v}^q}f(v)^p \d v\\
-\norm{b}_\infty \int_{f\pa{v^\prime}\leq 1} \exp\pa{2^{q-1}a \abs{v}^q}f(v) \exp\pa{2^{q-1}a\abs{v_\ast}^q}\M(v_\ast)\log\pa{f\pa{v^\prime}}\d v\d\vb\d\sigma
\end{multline*}
which {concludes} the proof.
\end{proof}
\begin{cor}\label{cor:upper_bound_on_D_exp_with_lower_bound_on_f}
Let $p>1$ and $0<a< \min\pa{\frac{1}{8},\frac{p-1}{8p}}$. Then for any non-negative $f$ such that
$$f(v) \geq A \exp\pa{-B\abs{v}^2},$$
for some $A,B>0$ we have that
\begin{equation}\label{eq:upper_bound_on_D_exp_lower_bound_on_f}
\begin{gathered}
\mathbf{\Gamma}_{a,2}(f) \leq C_{a,p,d}\pa{H_p(f)+\pa{\abs{\log A}+2B}H_p(f)^{\frac{1}{p}}}.
\end{gathered} 
\end{equation}
for a uniform constant $C_{a,p,d}$ that depends only on $a,p,\norm{b}_\infty$ and $d$.
\end{cor}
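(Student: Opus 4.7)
The plan is to start from Lemma \ref{lem:upper_bound_on_D_exp} applied with $q=2$, which gives us the decomposition
\[
  \mathbf{\Gamma}_{a,2}(f) \leq \underbrace{C_{a,2,p,d}\int_{\R^d}e^{4a|v|^2}f(v)^p\d v}_{\text{(I)}} \; - \; \underbrace{\|b\|_\infty\int_{f(v')\leq 1} e^{2a|v|^2}f(v)\, e^{2a|v_\ast|^2}\M(v_\ast)\log f(v')\,\d v\d\vb\d\sigma}_{\text{(II)}}.
\]
For term (I), writing $\M(v)^{1-p}=(2\pi)^{(p-1)d/2}\exp\bigl(\tfrac{(p-1)|v|^2}{2}\bigr)$, one has the pointwise bound
\[
  e^{4a|v|^2} \leq (2\pi)^{(p-1)d/2}\,\sup_{v\in\R^d}\exp\Bigl(\bigl(4a-\tfrac{p-1}{2}\bigr)|v|^2\Bigr)\;\M(v)^{1-p},
\]
and this supremum is finite as soon as $a\leq (p-1)/8$; since $a<(p-1)/(8p)<(p-1)/8$, the first term is controlled by a constant times $H_p(f)$.

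For term (II), the key observation is that the Gaussian lower bound on $f$ yields, on the set $\{f(v')\leq 1\}$,
\[
  -\log f(v') \leq |\log A| + B|v'|^2.
\]
Combined with conservation of kinetic energy $|v'|^2+|v_\ast'|^2=|v|^2+|v_\ast|^2$, one gets the sigma-independent bound $|v'|^2\leq |v|^2+|v_\ast|^2$, which lets us integrate out $\sigma$ trivially (producing the constant $|\sph|$). Thus (II) is dominated by a product of the form
\[
  C_d\|b\|_\infty\bigl(|\log A|+B\bigr)\int_{\R^d}e^{2a|v|^2}(1+|v|^2)f(v)\d v \; \int_{\R^d}e^{2a|v_\ast|^2}(1+|v_\ast|^2)\M(v_\ast)\d v_\ast,
\]
up to splitting the $|v|^2+|v_\ast|^2$ factor (which produces the $2B$ in the final constant).

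The second integral is a Gaussian moment, finite as soon as $2a<1/2$, which is guaranteed by $a<1/8$. For the integral in $f$, I apply H\"older's inequality with conjugate exponents $p, p'=p/(p-1)$, writing $f=\M^{(p-1)/p}\cdot \M^{-(p-1)/p}f$, to get
\[
  \int e^{2a|v|^2}(1+|v|^2)f(v)\d v \leq \Bigl(\int e^{2ap'|v|^2}(1+|v|^2)^{p'}\M(v)\d v\Bigr)^{1/p'} H_p(f)^{1/p};
\]
the Gaussian moment on the right is finite precisely when $2ap'<1/2$, i.e.\ $a<(p-1)/(4p)$, which is implied by our hypothesis $a<(p-1)/(8p)$. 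Assembling the two estimates gives the stated inequality, with the constant $C_{a,p,d}$ absorbing the Gaussian integrals, $(2\pi)^{(p-1)d/2}$, $|\sph|$ and $\|b\|_\infty$.

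The estimates here are essentially routine bookkeeping; the only delicate point is ensuring that the same condition on $a$ makes both the brute-force majoration $e^{4a|v|^2}\M^{p-1}\lesssim 1$ in term (I) and the H\"older-type Gaussian integral in term (II) converge simultaneously, which is exactly why the explicit constraint $a<\min\bigl(1/8,(p-1)/(8p)\bigr)$ is imposed.
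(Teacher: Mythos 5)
Your proof is correct and follows essentially the same route as the paper's: apply Lemma \ref{lem:upper_bound_on_D_exp} with $q=2$, control the first term by writing $f^p = \M^{p-1}\cdot\M^{1-p}f^p$ and using $a<(p-1)/8$ to bound $e^{4a|v|^2}\M^{p-1}$, and control the second term via $-\log f(v')\leq|\log A|+B(|v|^2+|v_\ast|^2)$ followed by H\"older with exponents $p$ and $p/(p-1)$ against $\M^{(p-1)/p}$. The only cosmetic difference is that you keep the polynomial weights $(1+|v|^2)$ explicitly, whereas the paper absorbs them into the exponential (i.e.\ uses $(1+|v|^2)e^{2a|v|^2}\leq C_a e^{4a|v|^2}$), which is why the paper's Gaussian moments end up with the exponent $4a$ and the constraints $a<1/8$, $a<(p-1)/(8p)$ appear directly; the final estimate is the same.
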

\begin{proof}
As $a<\frac{p-1}{8}$ we find that 
$$\int_{\R^d}\exp\pa{4a\abs{v}^2}f(v)^p\d v \leq \pa{2\pi}^{\frac{d}{2}}H_p(f).$$
Next, due to the lower bound on $f$ we find that
$$-\log f\pa{v^\prime}  \leq -\log A +B\abs{v^\prime}^2 \leq \abs{\log A}+B\pa{\abs{v}^2+\abs{v_\ast}^2}$$
and as such
\begin{multline*}
-\int_{f\pa{v^\prime}\leq 1} \exp\pa{2a \abs{v}^2}f(v) \exp\pa{2a\abs{v_\ast}^2}\M(v_\ast)\log f\pa{v^\prime}\d v\d\vb \d\sigma\\
\leq \pa{\abs{\log A}+2B}C_{a,d}\int_{R^d\times \R^d}\exp\pa{4a \abs{v}^2}f(v) \exp\pa{4a\abs{v_\ast}^2}\M(v_\ast)\d v\d\vb \\
=\pa{\abs{\log A}+2B}C_{a,d} \int_{\R^d}\exp\pa{4a\abs{v}^2}\M(v)^{\frac{p-1}{p}}\M(v)^{\frac{1-p}{p}}f(v)\d v\\
\leq \pa{\abs{\log A}+2B}C_{a,d} \pa{\int_{\R^d}\exp\pa{\frac{4ap}{p-1}\abs{v}^2}\M(v)\d v}^{\frac{p-1}{p}}H_p(f)^{\frac{1}{p}}.\end{multline*}
The result follows {from Lemma \ref{lem:upper_bound_on_D_exp} since $a<\frac{p-1}{8p}$.}
\end{proof}
Lastly, {before proving Theorem \ref{thm:rate_of_convergence_exp}, we show the following simple lemma:}
\begin{lem}\label{lem:H_p_bounds_moments}
Let $f$ be a non-negative function and let $s\geq 0$. Then
\begin{equation}\label{eq:H_p_bounds_moments}
m_s(f) \leq m_{\frac{sp}{p-1}}\pa{\M}^{\frac{p-1}{p}}H_p(f)^{\frac{1}{p}}.
\end{equation}
\end{lem}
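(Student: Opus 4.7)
The statement is a direct application of Hölder's inequality, so the plan is very short. The main idea is to split the integrand $|v|^s f(v)$ into a product that matches the structure of $H_p$ and of a Gaussian moment, then apply Hölder with the conjugate exponents $p$ and $p/(p-1)$.

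More precisely, I would write
\begin{equation*}
|v|^s f(v) = \Bigl(|v|^s \M(v)^{\frac{p-1}{p}}\Bigr) \cdot \Bigl(\M(v)^{-\frac{p-1}{p}} f(v)\Bigr),
\end{equation*}
integrate over $\R^d$, and apply Hölder's inequality with the exponents $q=\frac{p}{p-1}$ and $p$. The first factor raised to the power $q$ gives $|v|^{\frac{sp}{p-1}} \M(v)$, whose integral is $m_{\frac{sp}{p-1}}(\M)$. The second factor raised to the power $p$ gives $\M(v)^{1-p} f(v)^p$, whose integral is exactly $H_p(f)$. Combining yields
\begin{equation*}
m_s(f) \leq m_{\frac{sp}{p-1}}(\M)^{\frac{p-1}{p}} H_p(f)^{\frac{1}{p}},
\end{equation*}
which is the claimed bound.

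There is no obstacle here: the only thing to check is that $m_{\frac{sp}{p-1}}(\M)$ is finite, which is immediate since $\M$ is a Gaussian and $\frac{sp}{p-1} \geq 0$. The nonnegativity of $f$ ensures that we can drop the absolute values without loss.
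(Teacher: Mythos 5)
Your proof is correct and is essentially identical to the paper's: the paper writes $m_s(f)=\int_{\R^d}\abs{v}^s\M(v)^{\frac{p-1}{p}}\M(v)^{\frac{1-p}{p}}f(v)\d v$ and applies H\"older with exactly the same split and exponents. No difference to report.
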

\begin{proof}
We have that
$$m_s(f)=\int_{\R^d}\abs{v}^s\M(v)^{\frac{p-1}{p}}\M(v)^{\frac{1-p}{p}}f(v)\d v \leq \pa{\int_{\R^d}\abs{v}^{\frac{sp}{p-1}}\M(v)\d v}^{\frac{p-1}{p}}H_p(f)^{\frac{1}{p}},$$
completing the proof.
\end{proof}
\begin{proof}[Proof of Theorem \ref{thm:rate_of_convergence_exp}]
Since $H_p(f_0)<\infty$ we know, due to Corollary \ref{cor:H_p_decreases} that 
$$H_p(f(t))\leq H_p(f_0) <\infty.$$
This implies, by Lemma \ref{lem:H_p_bounds_moments} that $f(t,v)$ has bounded moments of any order. Using this together with Theorem \ref{thm:instantaneous_creation_of_maxwellian_lower_bound} we conclude that for any $t_0>0$ we can find appropriate constants such that 
$$f(t,v) \geq A_1 \exp\pa{-B_1 \abs{v}^2}.$$
This, together with Corollary \ref{cor:dissipation_interpolation_exp_solution} and \ref{cor:upper_bound_on_D_exp_with_lower_bound_on_f} with the choice of $a=\frac{1-p}{16p}$ shows inequality \eqref{eq:almost_cerc_exp}. As $\D_\gamma(f(t)) = -\frac{\d}{\d t}H\pa{f(t)|\M}$ the aforementioend inequality implies the desired convergence for $t\geq t_0$. \\
 {We are only left to show the correct rate of decay for $t<t_0$. Since all the moments exist, we can use Theorem \ref{thm:rate_of_convergence} (since $H_p(f)$ controls $\norm{f}_{p}$) to find that for $t\leq t_0$.
$$H\pa{f(t)|\M} \leq C_3\pa{1+t}^{-1} \leq C_3 \pa{\sup_{t\leq t_0}\frac{\exp\pa{\lambda t^{\frac{1}{1+\frac{\abs{\gamma}}{2}}}}}{1+t}}\exp\pa{-\lambda t^{\frac{1}{1+\frac{\abs{\gamma}}{2}}}}$$
form some constant $C_3$. This, together with our rate of decay for $t>t_0$, concludes the proof.}
\end{proof}

\section{About the non cut-off case}
\label{sec:nonc}

In this final section we aim to discuss a few preliminary results for
the linear Boltzmann equation with soft potential \emph{and without
  angular cut off assumption}. More precisely, we will assume that
there exist two positive constants $c_{1} \geq c_{0} >0$ such that
\begin{equation}
  \label{eq:bnoncut}
  c_{0}|\theta|^{-(d-1)-\nu} \leq b(\cos\theta)
  \leq c_{1}|\theta|^{-(d-1)-\nu}, \qquad \nu \in (0,2).
\end{equation}
In this case, it is simple to check that
$$\int_{\sph}b(\cos\theta)\d\sigma=\infty.$$
The divergence of the above integral means that we are not able to
split our linear operator into a gain and loss parts.

However, the study of the non-linear Boltzmann equation for soft
potentials without cut-off \cite{gress}, and in particular the
spectral analysis its linearised version (see for instance
\cite{strain}), suggests that the long-time behaviour of the linear
Boltzmann equation should, for some range of the parameters
$\gamma, \nu$, be similar to the one of the Boltzmann equation for
\emph{hard potentials}. In particular, we will show in the next
subsection the existence of a spectral gap as soon as $\gamma+\nu>0$.

\subsection{Existence of a spectral gap}
We still assume here that $b(\cdot)$ satisfies \eqref{eq:bnoncut} and we denote by
$\dD(f)$ the Dirichlet form associated to {the linear Boltzmann operator,} $\bm{L}_{B}$:
$$\dD(f)=\frac{1}{2}\int_{\R^{d}\times\R^{d} \times \sph}b(\cos\theta)|v-\vb|^{\gamma}\M(v)\M(\vb)\pa{h(v)-h(v')}^{2}\d v\d\vb \d\sigma$$
where $h=\frac{f}{\M}$. 

{An adaptation of the approach appearing in \cite{strain} yields the following:} 
\begin{propo}
  \label{propo:spectral}
  For any $\varepsilon > 0$ there is an explicit constant
  $C=C(B,\varepsilon) > 0$ such that
  {$$\dD(f) \geq C\,\left\|f-\varrho_{f}\M\right\|_{L^{2}(\langle v
      \rangle^{\gamma +\nu-\varepsilon}\M^{-1})}^{2}.$$}
  In particular, if $\gamma + \nu > 0$ there is $\lambda > 0$ so that
  \begin{equation}
    \label{eq:spectralgap}
    \dD(f) \geq \lambda\|f-\varrho_{f}\M\|^{2}_{L^{2}(\M^{-1})},
  \end{equation}
  i.e. $\bm{L}_{B}$ admits a spectral gap of size $\lambda$ in the
  space $L^2(\M^{-1})$
\end{propo}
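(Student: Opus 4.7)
My plan is to adapt the strategy of Strain \cite{strain} for the linearised Boltzmann equation to the present linear setting. Write $h = f/\M$ and note that $\dD$ annihilates constants, so with $\bar h := \varrho_f$ it suffices to prove the weighted coercivity bound for the function $h - \bar h$, which has zero mean against $\M$. The Dirichlet form then takes the form
\begin{equation*}
  \dD(f) = \frac{1}{2}\int_{\R^d\times\R^d\times\sph} b(\cos\theta)\,|v-\vb|^{\gamma}\,\M(v)\M(\vb)\,(h(v)-h(v'))^{2}\,\d v\,\d\vb\,\d\sigma,
\end{equation*}
which is suitable for splitting into an angular-coercivity piece and a relative-velocity weight piece.

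The heart of the argument is the non-cutoff coercivity lemma, essentially due to Alexandre--Desvillettes--Villani--Wennberg and re-packaged in \cite{strain,gress}: using the singularity $b(\cos\theta)\simeq |\theta|^{-(d-1)-\nu}$ from \eqref{eq:bnoncut}, one bounds
\begin{equation*}
  \int_{\sph} b(\cos\theta)\,(h(v')-h(v))^{2}\,\d\sigma
  \gtrsim |v-\vb|^{\nu}\,\bigl\|\mathcal{D}^{\nu/2}_{v-\vb}h\bigr\|^{2}_{\text{loc}}
\end{equation*}
where $\mathcal{D}^{\nu/2}$ is a fractional-derivative quantity in the direction $v-\vb$. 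Multiplying by $|v-\vb|^{\gamma}\M(v)\M(\vb)$ and integrating, the factor $|v-\vb|^{\gamma+\nu}$ appears, and an integration of $|v-\vb|^{\gamma+\nu}\M(\vb)$ in $\vb$ produces a weight comparable to $\langle v\rangle^{\gamma+\nu}$ (using $\gamma+\nu > -d$). After a cancellation/commutator analysis, together with interpolation between the short-range and long-range regimes of $|v-\vb|$ (which is where the $\varepsilon$-loss is paid), one arrives at
\begin{equation*}
  \dD(f) \gtrsim \int_{\R^d} \M(v)\,\langle v\rangle^{\gamma+\nu-\varepsilon}\,\bigl|(-\Delta)^{\nu/4}\bigl(h-\bar h\bigr)(v)\bigr|^{2}\,\d v.
\end{equation*}

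To finish, I would combine this fractional Sobolev estimate with a weighted fractional Poincaré inequality with Gaussian weight: since $h-\bar h$ has zero $\M$-mean, its weighted fractional $\dot H^{\nu/2}$ norm controls its $L^{2}(\M\,\langle v\rangle^{\gamma+\nu-\varepsilon})$ norm. This yields the first displayed inequality of the proposition. The second part, the spectral gap in $L^{2}(\M^{-1})$, is then immediate: when $\gamma+\nu > 0$, choosing $\varepsilon < \gamma+\nu$ makes the weight $\langle v\rangle^{\gamma+\nu-\varepsilon}$ uniformly bounded below by a positive constant, so the weighted norm dominates the unweighted one.

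The main obstacle will be tracking the weights through the coercivity step: the $|v-\vb|^{\gamma}$ factor is singular at $v=\vb$ and must be combined with the angular fractional derivative in a way that correctly produces $\langle v\rangle^{\gamma+\nu}$ after the $\vb$-integration against $\M$. Controlling the resulting commutator error (which forces the passage from $\gamma+\nu$ to $\gamma+\nu-\varepsilon$) and relating $\langle v'\rangle$ to $\langle v\rangle$ and $\langle \vb\rangle$ uniformly in $\sigma$ is the delicate point; everything else is an adaptation of the now-standard non-cutoff coercivity toolbox.
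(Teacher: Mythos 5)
Your proposal takes a genuinely different route from the paper. The paper does \emph{not} invoke any fractional-derivative or anisotropic Sobolev machinery: it directly adapts Mouhot--Strain \cite{strain}, Proposition 3.1, by a \emph{multiplier} argument. Concretely, one fixes $\beta \in (d-1,\,d-1+\nu)$ and restricts the angular integral to the level set
\begin{equation*}
\mathbf{C}_{\beta}(v,\vb)=\bigl\{\sigma\in\sph:\;(\sin\theta/2)^{-(d-1)-\nu}\geq|v-\vb|^{\beta}\bigr\},
\end{equation*}
on which the kernel is bounded below by $|v-\vb|^{\gamma+\beta}$. Expanding $(h(v)-h(v'))^{2}$ then splits the restricted Dirichlet form as $\mathcal{D}_{1}-\mathcal{D}_{2}$, where $\mathcal{D}_{1}$ is the diagonal term, controlled from below via the measure estimate $\int_{\mathbf{C}_{\beta}}\d\sigma\gtrsim|v-\vb|^{-\beta(d-1)/(\nu+d-1)}$ and Lemma~\ref{lem:loss_operator}, giving the weight $\langle v\rangle^{\gamma+\beta\nu/(\nu+d-1)}$; and $\mathcal{D}_{2}$ is the cross term, rewritten through the Carleman kernel $k_{\gamma+\beta}(v,w)$ of Lemma~\ref{lem:little_k_repres} and bounded above by $C\int f^{2}\langle v\rangle^{\gamma+\beta-(d-1)}\M^{-1}$. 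Since $\gamma<\gamma+\beta-(d-1)<\gamma+\beta\nu/(\nu+d-1)$, the cross term is absorbed by $\mathcal{D}_{1}$ plus a small multiple of the baseline coercivity $\dD(f)\gtrsim\|f\|_{L^{2}(\langle v\rangle^{\gamma}\M^{-1})}^{2}$; the choice $\beta=(\nu+d-1)(1-\varepsilon/\nu)$ yields the stated exponent $\gamma+\nu-\varepsilon$. This is elementary, in the sense that it never leaves weighted $L^{2}$.

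That said, as written your sketch has a genuine gap: the two lemmas it rests on are asserted, not derived, and each one hides the entire difficulty. First, the ADVW/Gressman--Strain anisotropic coercivity is formulated for the \emph{linearised} operator and is not a simple pointwise-in-$(v,\vb)$ lower bound of the form $\int_{\sph}b(h(v')-h(v))^{2}\d\sigma\gtrsim|v-\vb|^{\nu}\|\cdot\|^{2}$; the $|v-\vb|^{\nu}$ factor and the anisotropic seminorm are produced jointly by the angular integration, so you cannot freely multiply by $|v-\vb|^{\gamma}\M(\vb)$ and integrate in $\vb$ to read off the weight $\langle v\rangle^{\gamma+\nu}$ without re-doing the estimate in the soft-potential setting. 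This is exactly the ``delicate point'' you flag, but it is the whole proof, and it is where the paper's $\mathbf{C}_{\beta}$-restriction and Carleman cross-term estimate do the work. Second, the ``weighted fractional Poincaré inequality with Gaussian weight'' is a non-trivial auxiliary result: it is a spectral gap statement for a weighted fractional Ornstein--Uhlenbeck generator, and it is not cited or proved; for $\gamma+\nu-\varepsilon<0$ the weight degrades at infinity and the claim needs justification. The paper sidesteps this entirely: it never converts $\dD$ into a regularity norm, only compares weighted $L^{2}$ norms directly, which is exactly why the argument closes with just the absorption step and Lemma~\ref{lem:loss_operator}. If you want to pursue your route, you would need to supply precise statements (with the correct weights and direction of anisotropy) of both lemmas in the linear soft-potential setting, at which point you would likely find yourself reproving something close to Mouhot--Strain's Proposition 3.1 anyway.
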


\begin{proof}
  As was mentioned earlier, this result is a direct adaptation of
  \cite[Proposition 3.1]{strain}. We sketch the proof here for
  completion.
  
  Using the fact that $\dD(f-\varrho_{f}\M)=\dD(f)$, one can assume
  without loss of generality that $\varrho_{f}=0.$ {Since}, from
  \eqref{eq:bnoncut}, there exists $c_{0}> 0$ such that
  $$b(\cos\theta) \geq c_{0}(\sin \theta/2)^{-(d-1)-\nu}$$
  {it suffices to prove} the result for
  \begin{equation}\label{eq:B}
    B(v-\vb,\sigma)=|v-\vb|^{\gamma}\,(\sin \theta/2)^{-(d-1)-\nu}.
  \end{equation}
  {For a given $v,\vb \in \R^{2d}$, and for $0 < \beta< d-1+\nu$ to be chosen later, we define the set}
  $$\mathbf{C}_{\beta}=\mathbf{C}_{\beta}(v,\vb)=\left\{\sigma \in \sph\,;\;(\sin \theta/2)^{-(d-1)-\nu}
    \geq |v-\vb|^{\beta}\right\}.$$
  {Since the set $\mathbf{C}_{\beta}$ is invariant under the transformation $\sigma \to -\sigma$ and $(v,\vb) \to (v',\vb')$}
{\begin{equation*}\begin{gathered}
\dD(f) \geq \frac{1}{2}\int_{\R^{d}\times\R^{d}}\d v\d\vb \int_{\mathbf{C}_{\beta}(v,\vb)}b(\cos\theta)|v-\vb|^{\gamma}\M(v)\M(\vb)\pa{h(v)-h(v')}^{2}\d v\d\vb \d\sigma\\
\geq \frac{1}{2}\int_{\R^{d}\times \R^{d}\times \mathbf{C}_{\beta}}|v-\vb|^{\gamma+\beta}\M(v)\M(\vb)\pa{h(v)-h(v')}^{2}\d v\d\vb \d\sigma\\
=\int_{\R^{2d}\times \mathbf{C}_{\beta}}|v-\vb|^{\gamma+\beta}\M(v)\M(\vb)h^{2}(v)\d v\d\vb \d\sigma\\
 -\int_{\R^{2d}\times \mathbf{C}_{\beta}} |v-\vb|^{\gamma+\beta}\,\M(v)\M(\vb)h(v)h(v')\d v\d\vb \d\sigma=\mathcal{D}_{1}-\mathcal{D}_{2}.
 \end{gathered}
 \end{equation*}}
Now,
\begin{equation*}\begin{split}
\mathcal{D}_{1}&=\int_{\R^{2d}\times \mathbf{C}_{\beta}}|v-\vb|^{\gamma+\beta}\M(v)\M(\vb)h^{2}(v)\d v\d\vb \d\sigma\\
&=\int_{\R^{d}}f^{2}(v)\M^{-1}(v)\d v \int_{\R^{d}}\M(\vb)|v-\vb|^{\gamma+\beta}\d\vb \int_{\mathbf{C}_{\beta}}\d\sigma\end{split}\end{equation*}
{As (see \cite{strain}) there is some universal constant $c=c_{d} >0$ such that
$$\int_{\mathbf{C}_{\beta}}\d\sigma \geq c\,|v-\vb|^{-\frac{\beta(d-1)}{\nu+d-1}}$$
for any given $v,\vb \in \R^{d}$, we find that}
$$\mathcal{D}_{1} \geq c\,\int_{\R^{2d}}f^{2}(v)\d v \int_{\R^{d}}\M(\vb)\,|v-\vb|^{\gamma+\frac{\beta\nu}{\nu+d-1}}\d\vb$$
which, according to {Lemma \ref{lem:loss_operator}} yields the existence of some explicit constant $C_{\nu,\beta} >0$ such that
$$\mathcal{D}_{1} \geq C_{\nu,\beta} \int_{\R^{d}}f^{2}(v)\langle v\rangle^{\gamma+\frac{\beta\nu}{\nu+d-1}}\d v.$$
{Next, we notice that
\begin{equation*}\begin{gathered}
|\mathcal{D}_{2}| \leq \int_{\R^{2d}\times \sph}|v-\vb|^{\gamma+\beta}\M(v)\M(\vb) h(v)h(v')\d v\d\vb\\
=\int_{\R^{2d}\times \sph}|v-\vb|^{\gamma+\beta}f(v)\M(\vb)\M^{-1}(v')f(v')\d v\d\vb\\
=\int_{\R^{d}}f(v)\M^{-1}(v)\,\bm{K}_{\gamma+\beta}f(v)\d v
\end{gathered}\end{equation*}
where $\bm{K}_{\gamma+\beta}$ is the gain operator of the linear Boltzmann operator associated to the (cut-off) kernel $B(v-\vb,\cos\theta)=|v-\vb|^{\gamma+\beta}$. Recalling that (see Lemma \ref{lem:little_k_repres})}
$$\bm{K}_{\gamma+\beta}f(v)=\int_{\R^{d}}k_{\gamma+\beta}(v,w)f(w)\d w.$$
{we find that}
$$|\mathcal{D}_{2}| \leq \int_{\R^{2d}}f(v)\M^{-1}(v)k_{\gamma+\beta}(v,w)f(w)\d v \d w$$
{Following \cite{strain} again, one can show that }
$$|\mathcal{D}_{2}| \leq C\int_{\R^{d}}f(v)^{2}\langle v \rangle^{\gamma+\beta-(d-1)} \M^{-1}(v)\d v$$
as soon as $d-1 < \beta < d-1+\nu$. {Since this condition on $\beta$ implies that
$$\gamma  < \gamma+\beta-(d-1) < \gamma  + \frac{\beta\nu}{\nu+d-1}$$
we find that for any $\delta >0$ there exists $C_\delta>0$ such that}
$$|\mathcal{D}_{2}| \leq C_{\delta}\int_{\R^{d}}f^{2}(v)\langle v \rangle^{\gamma}\,\M^{-1}(v)\d v +\delta\int_{\R^{d}}f^{2}(v)\langle v \rangle^{\gamma  + \frac{\beta\nu}{\nu+d-1}}\,\M^{-1}(v)\d v.$$
Therefore, {choosing} $\delta >0$ small enough, one gets
\begin{equation*}\dD(f) \geq \mathcal{D}_{1}-\mathcal{D}_{2} \geq C_{1}\int_{\R^{d}}f^{2}(v)\langle v\rangle^{\gamma+\frac{\beta\nu}{\nu+d-1}}\M^{-1}(v)\d v
-C_{2}\int_{\R^{d}}f^{2}(v)\langle v \rangle^{\gamma}\,\M^{-1}(v)\d v\end{equation*}
for some $C_{1},C_{2} >0.$ Since {in addition, one can show that (again, see \cite{strain})
$$\dD(f)  \geq C_{\gamma}\| f\|_{L^{2}(\langle v\rangle^{\gamma}\M^{-1})}^{2},$$ 
we conclude that
$$\dD(f) \geq C_{3}\int_{\R^{d}}f^{2}(v)\langle v\rangle^{\gamma+\frac{\beta\nu}{\nu+d-1}}\M^{-1}(v)\d v
$$}
for some explicit constant $C_{3}$ depending on $\gamma, \beta, \nu$. {At this point we will choose  
$$\beta = \pa{\nu+d-1}\pa{1-\frac{\epsilon}{\nu}}$$
for $\epsilon>0$ small enough, and conclude the desired result}
\end{proof}

 \appendix
 
\section{Basic properties of the linear Boltzmann equation}\label{app:BE}

We collect here some of the technical properties of the linear
Boltzmann operator used in the core of the text. We begin with the
proof of Lemma \ref{lem:lyapunov_functionals} given in the
Introduction:

\begin{proof}[Proof of Lemma \ref{lem:lyapunov_functionals}]
  The fact that $H_{\mathbf{\Phi}}(\cdot|\M)$ is a Lyapunov functional
  of \eqref{eq:BE} for \emph{any} convex function $\mathbf{\Phi}$ is a
  general property of stochastic semigroups.  A rigorous proof {can be
    found} in \cite{rudnicki}. We {will only provide} a formal proof
  of {this} property. Differentiating $H_{\mathbf{\Phi}}(f(t)|\M)$
  {under the flow of the equation} and denoting by $h=f/\M$, we find
  that
$$\frac{\d}{\d t}H_{\mathbf{\Phi}}(f(t)|\M)=\int_{\R^d}\partial_t f(t,v){\mathbf{\Phi}}'\pa{h(t,v)}\d v$$
where ${\mathbf{\Phi}}'$ denotes the derivative of $\mathbf{\Phi}$. Then
\begin{multline*}
\frac{\d}{\d t}H_{\mathbf{\Phi}}(f(t)|\M) 
=-\int_{\R^d\times\R^d\times\sph}B\pa{\abs{v-v_\ast},\sigma}\M(v)\M(v_\ast)\times \\
\times\pa{h(t,v)-h\pa{t,v^\prime}}
{\mathbf{\Phi}}'\pa{h(t,v)}\d v\d\vb\d\sigma\end{multline*}
where we have used the fact that $\M(v)\M(v_\ast)=\M\pa{v^\prime}\M\pa{v^\prime_\ast}$. Using the usual pre-post collision change of variables yields 
\begin{multline*}\frac{\d}{\d t}H_{\mathbf{\Phi}}(f(t)|\M)=-\frac{1}{2}\int_{\R^d\times\R^d\times\sph}B\pa{\abs{v-v_\ast},\sigma}\M(v)\M(v_\ast)\times\\
\times \pa{h(t,v)-h(t,v^\prime)}\pa{{\mathbf{\Phi}}'\pa{h(t,v)}-{\mathbf{\Phi}}'\pa{h\pa{t,v^\prime}}}\d v\d\vb\d\sigma.\end{multline*}
The latter is nonnegative due to the convexity of $\mathbf{\Phi}$.
\end{proof}

\subsection{Carleman's representation}\label{app:carleman}
We now recall the Carleman's representation (see \cite{carleman,Vil}) of the gain operator for general interactions which we used in Section \ref{sec:maxwellian_lower_bounds}:
\begin{lem}\label{lem:gain_operator}
For any $\alpha \in\R$ the gain operator $\bm{K}_{\alpha}=\Q_{\alpha}^{+}(\cdot,\M)$ can be written as
\begin{equation}\label{eq:gain_operator}
\bm{K}_\alpha f(v)=\int_{\R^d} k_\alpha(v,w)f(w)\d w,
\end{equation}
where, for any $v,w \in \R^{2d}$, 
\begin{equation}\label{eq:def_little_k}
k_\alpha(v,w)=2^{d-1} \abs{v-w}^{-1}
\int_{\pa{v-w}^{\perp}}\abs{z-(v-w)}^{\alpha-(d-2)}
b\pa{\frac{\abs{z}^2-\abs{v-w}^2}{\abs{z}^2+\abs{v-w}^2}}\M(z+v)\d\pi(z)
\end{equation}
{with $(v-w)^{\perp}$ denoting} the hyperplane orthogonal to $(v-w)$ and $\d\pi(z)$ is the Lebesgue measure on that hyperplane. Moreover,
$$k_{\alpha}(v,w)\M(w)=k_{\alpha}(w,v)\M(v) \qquad \forall v,w \in \R^{d}\times \R^{d}.$$
\end{lem}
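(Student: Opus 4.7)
The plan is to establish \eqref{eq:def_little_k} by a Carleman-type change of variables in the integral defining $\bm{K}_{\alpha}$, and then to derive the detailed-balance identity $k_{\alpha}(v,w)\M(w)=k_{\alpha}(w,v)\M(v)$ as an equivalent reformulation of the self-adjointness of $\bm{K}_{\alpha}$ on $L^{2}(\M^{-1})$, which in turn is immediate from the pre–post collisional change of variables together with the micro-canonical identity $\M(v)\M(\vb)=\M(v')\M(\vb')$.

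For the first part I would start from
\[
\bm{K}_{\alpha}f(v) = \int_{\R^{d}\times\sph} \abs{v-\vb}^{\alpha}\,b(\cos\theta)\,f(v')\,\M(\vb')\,\d\vb\,\d\sigma
\]
and perform the change of variables $(\vb,\sigma)\mapsto(w,z)$ given by $w:=v'$ and $z:=\vb'-v$. The geometric backbone is the orthogonality $v'-v \perp \vb'-v$, which is immediate from the identities
\[
v'-v = \tfrac{\abs{v-\vb}}{2}(\sigma-\omega),\qquad \vb'-v = -\tfrac{\abs{v-\vb}}{2}(\sigma+\omega),\qquad \omega:=\tfrac{v-\vb}{\abs{v-\vb}},
\]
so that $z\in(v-w)^{\perp}$. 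Momentum conservation forces $\vb=w+z$, whence $\abs{v-\vb}^{2}=\abs{v-w}^{2}+\abs{z}^{2}$ and $\abs{v-\vb}=\abs{z-(v-w)}$. A direct expansion of $\sigma\cdot\omega$ gives
\[
\cos\theta = \frac{\abs{z}^{2}-\abs{v-w}^{2}}{\abs{z}^{2}+\abs{v-w}^{2}},
\]
and of course $\M(\vb')=\M(v+z)$. The classical Carleman Jacobian reads
\[
\d\vb\,\d\sigma = \frac{2^{d-1}}{\abs{v-w}\,\abs{v-\vb}^{d-2}}\,\d w\,\d\pi(z),
\]
and substituting $\abs{v-\vb}^{\alpha-(d-2)}=\abs{z-(v-w)}^{\alpha-(d-2)}$ together with $\M(v+z)$ then yields \eqref{eq:def_little_k} directly.

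For the symmetry relation, I would test against two functions $f,g$ and compute
\[
\int_{\R^{d}}(\bm{K}_{\alpha}f)(v)\,g(v)\,\M^{-1}(v)\,\d v = \int |v-\vb|^{\alpha}b(\cos\theta)\,f(v')\,\M(\vb')\,g(v)\,\M^{-1}(v)\,\d v\,\d\vb\,\d\sigma.
\]
Applying the involution $(v,\vb,\sigma)\mapsto(v',\vb',-\sigma)$, which preserves $|v-\vb|$ and $\cos\theta$ and has unit Jacobian, and then using the detailed balance $\M(v)\M(\vb)=\M(v')\M(\vb')$ to rewrite the Maxwellian factor as $\M(\vb')\M^{-1}(v)$ after relabelling, transforms the right-hand side into $\int f(v)(\bm{K}_{\alpha}g)(v)\M^{-1}(v)\,\d v$. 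Inserting the Carleman representation on both sides and using the arbitrariness of $f,g$ then forces $k_{\alpha}(v,w)\M^{-1}(v)=k_{\alpha}(w,v)\M^{-1}(w)$ pointwise almost everywhere, which is the claimed identity.

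The main obstacle is the verification of the Carleman Jacobian. Parametrizing $\vb$ by its projection on the line spanned by $w-v$ (which is already pinned down once $w$ is fixed, since $v'=w$ forces the post-collisional alignment) and its perpendicular component, and then writing $\sigma$ in spherical coordinates about the axis $\omega$, one sees that the requirement $v'=w$ imposes a single scalar constraint reducing the $2d$-dimensional $(\vb,\sigma)$-space to the $(2d-1)$-dimensional $(w,z)$-parametrization. The sphere measure restricted to this constraint produces the factor $\abs{v-\vb}^{-(d-2)}$, while the scalar co-area Jacobian of the radial variable supplies the $\abs{v-w}^{-1}$ and the combinatorial $2^{d-1}$. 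This computation, though classical (it goes back to Carleman and is carried out in detail in \cite{Vil}), is bookkeeping-heavy and is where the care needs to be concentrated; all other steps reduce to the elementary algebraic identities recorded above.
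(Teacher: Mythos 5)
Your derivation of the representation \eqref{eq:def_little_k} is essentially the paper's: the paper simply quotes the general Carleman formula from \cite[Appendix C]{gpv} and specialises it (using the evenness of $b$ to absorb a sign and the orthogonality $z\perp(v-w)$ to simplify $|2v-z-w|$ and the angular argument), while you redo the same change of variables $(\vb,\sigma)\mapsto(w,z)=(v',\vb'-v)$ by hand and, like the paper, take the classical Carleman Jacobian $\d\vb\,\d\sigma=2^{d-1}|v-w|^{-1}|v-\vb|^{-(d-2)}\d w\,\d\pi(z)$ on faith from \cite{carleman,Vil}; your algebraic identities ($\vb=w+z$, $|v-\vb|=|z-(v-w)|$, the formula for $\cos\theta$, $\M(\vb')=\M(z+v)$) are all correct, though your heuristic for the Jacobian miscounts dimensions ($\R^d\times\sph$ is already $(2d-1)$-dimensional, so there is no constraint reduction, just an equidimensional change of variables). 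Where you genuinely diverge is the detailed-balance relation: the paper reads it off the explicit formula, noting that for $z\in(v-w)^{\perp}$ one has $\ap{z,v}=\ap{z,w}$, hence $|z+v|^2+|w|^2=|z+w|^2+|v|^2$ and $\M(z+v)\M(w)=\M(z+w)\M(v)$, which gives $k_\alpha(v,w)\M(w)=k_\alpha(w,v)\M(v)$ for \emph{all} $v\neq w$ in one line; you instead prove self-adjointness of $\bm{K}_\alpha$ in $L^2(\M^{-1})$ via the pre--post collisional involution and $\M(v)\M(\vb)=\M(v')\M(\vb')$, and then deduce the kernel identity by duality. That route is valid but only yields the identity a.e.\ (upgradable to everywhere off the diagonal by continuity of the explicit kernel), and note that the involution you wrote, $(v,\vb,\sigma)\mapsto(v',\vb',-\sigma)$, is not measure preserving (its image lies in a null set); the correct substitution in the $\sigma$-representation is $(v,\vb,\sigma)\mapsto\pa{v',\vb',\tfrac{v-\vb}{|v-\vb|}}$, under which $|v-\vb|$ and $\cos\theta$ are invariant and your computation goes through unchanged (evenness of $b$ covers any residual sign). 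With these two small repairs your argument is correct; the paper's pointwise argument is simply shorter and gives the stronger "for all $v,w$" statement directly.
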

\begin{proof}
We start by recalling Carleman representation (see \cite[Appendix C]{gpv} for the derivation of the present expression): for a given interaction kernel $B(v-v_{\ast},\sigma)$ and given measurable functions $f,g$
\begin{multline}\label{eq:Carleman}
\int_{\R^d\times \sph} B\pa{v-v_\ast,\sigma}f\pa{v_\ast^\prime}g\pa{v^\prime}\d\vb\d\sigma \\
=2^{d-1}\int_{\R^d}\frac{f(w)}{\abs{v-w}} \d w \int_{E_{v,w}}\frac{B\pa{2v-z-w,\frac{z-w}{\abs{z-w}}}g(z)}{\abs{2v-z-w}^{d-2}}\d\pi(z),
\end{multline}
where $E_{v,w}$ is the hyperplane that passes through $v$ and is perpendicular to $v-w$. Applying this to $B(v-v_{\star},\sigma)=|v-v_{\ast}|^{\alpha}\,b(\cos \theta)$, one notes that, due to symmetry (recall that $b$ is even), it holds
\begin{equation*}\begin{split}
\bm{K}_\alpha f(v) &= 2^{d-1}\int_{\R^d}\frac{f(w)}{\abs{v-w}}\d w\int_{E_{v,w}}\frac{B\pa{2v-z-w,-\frac{z-w}{\abs{z-w}}}\M(z)}{\abs{2v-z-w}^{d-2}}\d\pi(z) \\
&=2^{d-1}\int_{\R^d}\frac{f(w)}{\abs{v-w}}\d w\int_{\pa{v-w}^{\perp}}\frac{\abs{v-z-w}^\alpha b\pa{\frac{v-z-w}{\abs{v-z-w}}\cdot\frac{w-z-v}{\abs{w-z-v}}}\M(z+v)}{\abs{v-z-w}^{d-2}}\d\pi(z)\\
&=2^{d-1}\int_{\R^d}\frac{f(w)}{\abs{v-w}}\d w\int_{\pa{v-w}^{\perp}}\abs{v-z-w}^{\alpha-(d-2)}b\pa{\frac{\abs{z}^2-\abs{v-w}^2}{\abs{z}^2+\abs{v-w}^2}}\M(z+v)\d\pi(z),\end{split}\end{equation*}
where we used the fact {that $z\perp (v-w)$ in the one before last expression}.\\
This proves \eqref{eq:gain_operator} and \eqref{eq:def_little_k}. In addition, for any $z \in (v-w)^{\perp}$, we  have $\ap{z,v}=\ap{z,w}$, which implies that
$$\abs{z+v}^2+\abs{w}^2=\abs{z+w}^2+\abs{v}^2.$$
Thus, on $(v-w)^\perp$ we have that $\M(z+w)\M(v)=M(z+v)\M(w)$. This, together with \eqref{eq:def_little_k}, shows that
$k_\alpha(v,w)\M(w)=k_\alpha(w,v)\M(v).$
\end{proof}

\begin{rem}\label{rem:Dkvw}
{We would like to point out at this point} that the above representation of the gain part allows to to establish  an alternative form of the entropy production associated to a convex mapping $\mathbf{\Phi}\::\:\R^{+}\mapsto \R^{+}$. {Indeed},
for any $\alpha > -d$, let $\D^{\mathbf{\Phi}}_{\alpha}$ be the associated $\mathbf{\Phi}$-entropy production of $\bm{L}_{\alpha}$:
$$\D^{\mathbf{\Phi}}_{\alpha}(f)=-\int_{\R^{d}}\bm{L}_{\alpha}f(v)\mathbf{\Phi}'\pa{\frac{f(v)}{\M(v)}}\d v.$$
Then, one can prove easily that
$$\D^{\mathbf{\Phi}}_{\alpha}(f)=\frac{1}{2}\int_{\R^{d}\times\R^{d}}k_{\alpha}(v,w)\M(w)\pa{h(v)-h(w)}\pa{\mathbf{\Phi}'(h(v))-\mathbf{\Phi}'(h(w))}\d v\d w$$
where $h=f/\M$ and $\mathbf{\Phi}'$ denotes the derivative of $\mathbf{\Phi}.$

{As the above above expression is actually valid for any $\alpha \in \R$, one can use it to give an alternative proof to the interpolation inequality \eqref{eq:DgD0Dmu} by showing that for $\gamma\in (-d,0)$ and $\mu>0$,
\begin{equation*}\label{eq:interpolation_of_k}
k_0(v,w)\leq k_\gamma(v,w)^{\frac{\mu}{\mu-\gamma}}k_{\mu}\,(v,w)^{-\frac{\gamma}{\mu-\gamma}}.
\end{equation*}
holds for a.e. $v,w \in \R^{d}$.}
\end{rem} 

{With the representation of $k_{\gamma}$ at hand, we can now show Lemma \ref{lem:little_k_repres}}. The proof is a simple adaptation of a similar study in \cite{carleman,MMK}
\begin{proof}[Proof of Lemma \ref{lem:little_k_repres}]
We start by writing {$V=\frac{v+\omega}{2}$ as $V=V_0+V_\perp$, where $V_\perp$ is the projection of $V$ on $\pa{v-\omega}^{\perp}$} and $V_0$ is parallel to $v-w$. Then $v=V+\frac{v-w}{2}$
and for any $z\in \pa{v-w}^\perp$
\begin{equation*}\begin{split}
\abs{v+z}^2&= \abs{\pa{V_0+\frac{v-w}{2}}+\pa{V_\perp+z} }^2 = \abs{V_0+\frac{v-w}{2}}^2+\abs{V_\perp+z}^2\\
&=\abs{V_0}^2+V_0\cdot (v-w)+\frac{\abs{v-w}^2}{4}+\abs{V_\perp+z}^2.\end{split}\end{equation*}
As
$$\frac{\abs{v}^2-\abs{w}^2}{2}=V\cdot (v-w)=V_0\cdot (v-w)=\pm\abs{V_0}\abs{v-w}$$
we can conclude that
$$\abs{V_0}^2=\frac{\pa{\abs{v}^2-\abs{w}^2}^2}{4\abs{v-w}^2}.$$
Thus, 
\begin{equation*}\begin{split}\abs{v+z}^2&=\frac{\pa{\abs{v}^2-\abs{w}^2}^2}{4\abs{v-w}^2}+\frac{\abs{v}^2-\abs{w}^2}{2}+\frac{\abs{v-w}^2}{4}+\abs{V_\perp+z}^2\\
&=\frac{1}{4}\pa{\abs{v-w}+\frac{\abs{v}^2-\abs{w}^2}{\abs{v-w}}}^2+\abs{V_\perp+z}^2,\end{split}\end{equation*}
which completes the proof.
\end{proof}

\section{Slow convergence to equilibrium}\label{app:slow}

In this Appendix we show that the rate of convergence to equilibrium
in \eqref{eq:BE} is naturally prescribed by the tails of the initial
datum $f_{0}$. Our main result is a simple adaptation of the analogue
Theorem from \cite{Lu} for the non-linear Boltzmann equation:
\begin{thm}
  Let $f_{0} \in L^{1}(\R^{d})$ be a non-negative initial datum with
  unit mass and let $f(t,\cdot)$ denotes the {solution} to
  \eqref{eq:BE}. For any $k \geq 0$, there exist {explicit} constants
  $C_{1} > 0$ and $C_{2,k} > 0$ such that
  $$\|f(t)-\M\|_{L^{1}_{k}} \geq C_{1}\int_{|v| > t^{\frac{1}{|\gamma|}}} \langle v \rangle^{k}f_{0}(v)\d v -C_{2,k}\exp\pa{-\frac{t^{\frac{2}{|\gamma|}}}{4}} \qquad \forall t \geq 0.$$
\end{thm}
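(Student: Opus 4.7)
\medskip

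\noindent\textbf{Proof plan.} The plan is to use the pointwise Duhamel lower bound coming from the splitting $\bm{L}_{\gamma} = \bm{K}_{\gamma} - \Sigma_{\gamma}$, together with the slow decay of $\Sigma_\gamma$ at infinity, to propagate the tails of $f_{0}$ in the solution. Since $f(t,\cdot)$ solves $\partial_t f = \bm{K}_{\gamma} f - \Sigma_{\gamma}(v) f$ with $\Sigma_\gamma \in L^{\infty}$, Duhamel's formula gives
\begin{equation*}
f(t,v) = e^{-\Sigma_\gamma(v) t}\,f_0(v) + \int_0^t e^{-\Sigma_\gamma(v)(t-s)}\,\bm{K}_\gamma f(s,v)\,\mathrm{d}s.
\end{equation*}
Since $f(s,\cdot) \geq 0$ implies $\bm{K}_\gamma f(s,v) \geq 0$, the integral term is nonnegative and we obtain the pointwise lower bound $f(t,v) \geq e^{-\Sigma_\gamma(v) t} f_0(v)$ for every $v \in \R^d$ and $t \geq 0$.

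Next, I would use Lemma \ref{lem:loss_operator} to bound $\Sigma_\gamma(v) \leq C_2 (1+|v|)^{\gamma} \leq C_2 |v|^\gamma = C_2 |v|^{-|\gamma|}$ for $v\neq 0$. Setting $R := t^{1/|\gamma|}$, any $|v| \geq R$ satisfies $|v|^{|\gamma|} \geq t$, so $\Sigma_\gamma(v)\, t \leq C_2$, whence
\begin{equation*}
  f(t,v) \geq e^{-C_{2}}\,f_{0}(v)
  \qquad \text{for all } |v| > R = t^{1/|\gamma|}.
\end{equation*}

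Restricting the $L^{1}_{k}$-norm to $\{|v|>R\}$ and using $|a-b| \geq a-b$ pointwise,
\begin{equation*}
  \|f(t)-\M\|_{L^{1}_{k}} \geq \int_{|v|>R}\langle v\rangle^{k}\bigl(f(t,v)-\M(v)\bigr)\d v
  \geq e^{-C_{2}}\int_{|v|>R}\langle v\rangle^{k}f_{0}(v)\d v - I_{k}(R),
\end{equation*}
where $I_{k}(R) := \int_{|v|>R}\langle v\rangle^{k}\M(v)\d v$. The Gaussian tail $I_{k}(R)$ is estimated by writing $e^{-|v|^{2}/2} = e^{-|v|^{2}/4}e^{-|v|^{2}/4}$, bounding the first factor by $e^{-R^{2}/4}$ on $\{|v|>R\}$, and noting that $\int_{\R^{d}}\langle v\rangle^{k}e^{-|v|^{2}/4}\d v =: (2\pi)^{d/2}\,C_{2,k}$ is finite. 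Substituting $R = t^{1/|\gamma|}$ yields the stated bound with $C_1 = e^{-C_2}$ and exponent $R^{2}/4 = t^{2/|\gamma|}/4$.

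There is no real obstacle: the argument is elementary and the two ``explicit'' constants come respectively from Lemma \ref{lem:loss_operator} (for $C_1$) and from the standard Gaussian tail estimate (for $C_{2,k}$). The conceptual point to emphasize is that for soft potentials $\gamma<0$ the collision frequency $\Sigma_{\gamma}(v)$ vanishes as $|v|\to\infty$, so particles at velocity $|v|$ undergo essentially no collisions on the natural timescale $|v|^{|\gamma|}$; this is exactly what allows the initial tails beyond $|v| > t^{1/|\gamma|}$ to survive with a uniform multiplicative loss, and consequently forces the rate of convergence to be no better than the rate at which the tails of $f_{0}$ decay.
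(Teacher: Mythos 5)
Your proof is correct and follows essentially the same route as the paper's: Duhamel's formula to get the pointwise lower bound $f(t,v)\geq e^{-\Sigma_\gamma(v)t}f_0(v)$, the bound $\Sigma_\gamma(v)\lesssim |v|^{-|\gamma|}$ to make the exponential factor uniformly bounded below on $\{|v|>t^{1/|\gamma|}\}$, and the same Gaussian tail estimate via $e^{-|v|^2/2}=e^{-|v|^2/4}e^{-|v|^2/4}$. No discrepancies.
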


\begin{proof}
  Using Duhamel's formula, one has, for a given $t > 0$,
$$f(t,v)=\exp\pa{-\Sigma_{\gamma}(v)t}f_{0}(v) + \int_{0}^{t}\bm{K}_{\gamma}f(s,v)\exp\pa{-\Sigma_{\gamma}(v)(t-s)} \d s \qquad \text{ for a.e. } v \in \R^{d}.$$
In particular, since $f(t,\cdot)$ is nonnegative
$$f(t,v) \geq \exp\pa{-\Sigma_{\gamma}(v)t}f_{0}(v) \qquad \text{ for a.e. } v \in \R^{d}, \qquad t  > 0.$$ 
Using {the fact that} $\Sigma_{\gamma}(v) \leq c_{\gamma}(1+|v|)^{\gamma} \leq c_{\gamma}|v|^{\gamma}$ for any $v \in \R^{d}$, one gets
$$f(t,v) \geq \exp\pa{-c_{\gamma}|v|^{\gamma}t}f_{0}(v) \qquad \text{ for a.e. } v \in \R^{d}, \qquad t > 0$$
and, in particular, setting $\alpha=1/|\gamma|$, one sees that
$$f(t,v) \geq \exp\pa{-c_{\gamma}}f_{0}(v) \qquad \text{ for a.e. } |v| > t^{\alpha}.$$
Consequently,
\begin{equation*}\begin{gathered}
\|f(t)-\M\|_{L^{1}_{k}} \geq \int_{|v| > t^{\alpha}}|f(t,v)-\M(v)| \langle v\rangle^{k}\d v \\
\geq \int_{|v| > t^{\alpha}}\langle v \rangle^{k}f(t,v)\d v -\int_{|v|>t^{\alpha}} \langle v \rangle^{k}\M(v)\d v\end{gathered}
\end{equation*}
{$$ \geq \exp\pa{-c_{\gamma}}\,\int_{|v| > t^{\alpha}} \langle v\rangle^{k}f_{0}(v) \d v -\pa{2\pi}^{-\frac{d}{2}}\int_{|v|>t^{\alpha}}
\langle v\rangle^{k}\exp\pa{-\frac{|v|^{2}}{2}}\d v.$$}
Since 
$$\ds\int_{|v| > t^{\alpha}}\langle v\rangle^{k}\exp\pa{-\frac{|v|^{2}}{2}}d v \leq \exp\pa{-\frac{t^{2\alpha}}{4}}\int_{\R^{d}}\langle v\rangle^{k}\exp\pa{-\frac{|v|^{2}}{4}}d v$$
{the proof is complete.}
\end{proof}

\bibliographystyle{plain}

\begin{thebibliography}{99}

\bibitem{alo}
{\sc Alonso, R. \& Sun, W.} The radiative transfer equation in the forward-peaked regime, \textit{Comm. Math. Phys.} {\bf 338} (2015), 1233--1286.


\bibitem{AlCaGa}
{\sc Alonso, R. J., Carneiro, E. \& Gamba, I. M.} Convolution inequalities for the Boltzmann collision operator, \textit{Com. Math. Phys.}, \textbf{298} (2010), 293--322.

\bibitem{baranger}
{\sc Baranger, C., \& Mouhot, C.} Explicit spectral gap estimates for the linearized Boltzmann and Landau operators with hard potentials, \textit{Rev. Mat. Iberoamericana}, {\bf 21} (2005), 8819--841.

\bibitem{BiCaLo}
{\sc Bisi, M., Ca\~nizo, J. A. \& Lods, B.} {Uniqueness in the weakly inelastic regime of the equilibrium state to the Boltzmann equation driven by a particle bath,} \textit{SIAM J. Math. Anal.} {\bf 43} (2011), 2640--2674. 
\bibitem{BCL}
{\sc Bisi, M., Ca\~nizo, J. A. \& Lods, B.}, {Entropy dissipation estimates for the linear Boltzmann operator}, \textit{Journal of Functional Analysis} {\bf 269} (2015), 1028--1069.

\bibitem{tetali}
{\sc Bobkov, S. G., \& Tetali, P.,} Modified log-Sobolev inequalities, mixing and hypercontractivity. \textit{Proceedings of the XXXVth Annual ACM Symposium on Theory of Computing,} pp. 287--296 (electronic), ACM, New York, 2003.


\bibitem{BrEi}
{\sc Briant, M., \& Einav, A.}, {On the Cauchy Problem for the Homogeneous Boltzmann--Nordheim Equation for Bosons: Local Existence, Uniqueness and Creation of Moments}, \textit{J. Stat. Phys.} {\bf 163} (2016), 1108--1156.

\bibitem{caflisch}
{\sc Caflisch, R. E.}, {The Boltzmann equation with a soft potential. I. Linear, spatially-homogeneous}, 
\textit{Comm. Math. Phys.} {\bf 74} (1980), 71--95. 

\bibitem{ACL}
{\sc Ca\~nizo, J. A., Einav, A. \& Lods, B.} {Trend to Equilibrium for the Becker-D\"oring Equations: An Analogue of Cercignani's Conjecture}, preprint, 2015.

\bibitem{CaLo} {\sc Ca\~nizo, J.~A.~\& Lods, B.} {Exponential trend to
    equilibrium for the inelastic Boltzmann equation driven by a
    particle bath}, \textit{Nonlinearity} {\bf 5}(29) (2016), 1687--1715.


\bibitem{Lu}
{\sc Carlen, E. A., Carvalho, M. C., \& Lu, X.} On strong convergence to equilibrium for the Boltzmann equation with soft potentials, \textit{J. Stat. Phys.} {\bf 135} (2009), 681--736.


\bibitem{carleman}
{\sc Carleman, T.,}
\newblock Probl\`emes math\'ematiques dans la th\'eorie cin\'etique des gaz.
\newblock {\it Publications Scientifiques de l'Institut
Mittag-Leffler,} {\bf 2} (1957)

\bibitem{dautray}
{\sc Dautray, R. \& Lions, J.-L.} Mathematical Analysis and Numerical Methods for Science and Technology, vol. 6, pp. 209-408. Springer, Berlin (1993)


\bibitem{DesvMou}
{\sc Desvillettes, L. \& Mouhot, C.} Large time behavior of the a priori bounds for the solutions to the spatially homogeneous Boltzmann equations with soft potentials, \textit{Asymptot. Anal.} {\bf 54} (2007), 235--245. 
\bibitem{DMV}
  {\sc Desvillettes, L., Mouhot, C.~\& Villani, C.}
  \newblock Celebrating Cercignani's conjecture for the Boltzmann
  equation,
  \newblock {\it Kinetic and Related Models} {\bf 4} (2011), 277--294 


\bibitem{felln}
{\sc Fellner, K., Poupaud, F., \& Schmeiser, C.} Existence and convergence to equilibrium of a kinetic model for cometary flows, \textit{J. Statist. Phys.} {\bf 114} (2004), 1481-?1499.

\bibitem{froh}
{\sc Fr\"ohlich, J., \& Gang, Z.} Exponential convergence to the Maxwell distribution for some class of Boltzmann equations, \textit{Comm. Math. Phys.} {\bf 314} (2012), 525--554. 

 \bibitem{gpv}\textsc{Gamba, I., Panferov, V. \&  Villani, C.} {Upper Maxwellian bounds for the spatially homogeneous Boltzmann equation},
\textit{Arch. Rational Mech. Anal.} {\bf 194} (2009), 253--282.


\bibitem{grad}
\textsc{Grad, H.,} Principles of the kinetic theory of gases, \textit{Flügge's Handbuch des Physik,} vol. XII, Springer-Verlag, pp. 205--294, 1958.



\bibitem{gress}
\textsc{Gressman, P. T, \& Strain, R. M.}, Global classical solutions of the Boltzmann equation without angular cut-off, \textit{J. Am. Math. Soc.} {\bf 24} (2011), 771--847.



\bibitem{guo}
\textsc{Guo, Y.}, {Classical solutions to the Boltzmann equation for molecules with an angular
cutoff}, \textit{Arch. Ration. Mech. Anal.} {\bf 169} (2003), 305--353. 




\bibitem{levermore}
\textsc{Levermore, C. D., \& Sun, W.}, {Compactness of the gain parts of the linearized Boltzmann operator with weakly cutoff kernels}, \textit{Kinet. Relat. Models}, {\bf 3} (2010), 335-?351. 


\bibitem{MMK}
{\sc Lods, B. \& Mokhtar-Kharroubi, M.} {Convergence to equilibrium for linear spatially homogeneous Boltzmann equation with hard and soft potentials: a semigroup approach in  $L^1$-spaces}, \url{http://arxiv.org/abs/1510.02361}

\bibitem{LMT}
{\sc Lods, B.,  Mouhot, C., \& Toscani, G.,} {Relaxation rate,
  diffusion approximation and {Fick's} law for inelastic scattering {Boltzmann}
  models}, \emph{Kinetic and Related Models}, {\bf 1} (2008) 223--248.
\bibitem{rudnicki}
{\sc {\L}oskot, K. \& Rudnicki, R.} {Relative entropy and stability of stochastic semigroups.} \textit{Annales Polonici Mathematici} {\bf 53} (1991),  139--145. 



 \bibitem{MiWe}
{\sc Mischler, S., \& B. Wennberg}, On the spatially homogeneous
Boltzmann equation.  \textit{Ann. Inst. H. Poincar\'e Anal. Non Lin\'eaire},
{\bf 16} (1999), 467--501.

\bibitem{MMK0}
{\sc Mokhtar-Kharroubi M.} Mathematical topics in neutron transport theory, new aspects. Series on Advances in Mathematics for Applied Sciences, vol. 46. World Scientific: Singapore, 1997.
\bibitem{MMK1}
{\sc Mokhtar-Kharroubi, M.} Optimal spectral theory of the linear Boltzmann equation, \textit{J. Funct. Anal.} {\bf 226} (2005), 21--47. 


\bibitem{monta}
{\sc Montagnini, B., \& Demuru, M.-L.} Complete continuity of the free gas scattering operator in neutron thermalization theory, \textit{J. Math. Anal. Appl.} {\bf 12} (1965) 49--57. 

\bibitem{Mou}
{\sc Mouhot, C.} 
Explicit coercivity estimates for the linearized Boltzmann and Landau operators, \textit{Comm. Partial Differential Equations} {\bf 31} (2006), 1321--1348. 

\bibitem{pettersson}
\textsc{R. Pettersson}, On weak and strong convergence to equilibrium for solutions to the linear Boltzmann equation, \textit{J. Stat. Phys.}, {\bf 72}, 335--380, 1993.



\bibitem{Silv}
{\sc Silvestre, L.} A new regularization mechanism for the Boltzmann equation without cut-off, \textit{Com. Math. Phys.}, \textbf{348} (2016), 69--100.

\bibitem{strain}
{\sc Mouhot, C., \& Strain, R. M.} Spectral gap and coercivity estimates for linearized Boltzmann collision operators without angular cutoff, \textit{J. Math. Pures Appl.} {\bf 87} (2007), 515--535.


\bibitem{ToVi}
{\sc Toscani, G., \& Villani, C.}, On the trend to equilibrium for some dissipative systems with slowly increasing a priori bounds. \textit{J. Statist. Phys.} {\bf 98} (2000) 1279--1309. 
\bibitem{Vil}
{\sc Villani, C.}
\newblock {A
  review of mathematical topics in collisional kinetic theory}.
  \newblock In S.~Friedlander and D.~Serre, editors, \emph{Handbook of
  Mathematical Fluid Dynamics, Vol. 1}, pages 71--305. Elsevier, Amsterdam,
  Netherlands; Boston, U.S.A., 2002.


\end{thebibliography}

\end{document}